\definecolor{darkgreen}{rgb}{0.0, 0.6, 0.13}
\definecolor{airforceblue}{rgb}{0.0, 0.30, 0.69}
\newtheorem{thm}{Theorem}[section]
 \newtheorem{lem}[thm]{Lemma}
 \newtheorem{prop}[thm]{Proposition}
 \theoremstyle{definition}
 \newtheorem{df}[thm]{Definition}
 \theoremstyle{remark}
 \newtheorem{rem}[thm]{Remark}
 \numberwithin{equation}{section}
\def\sqw{\hbox{\rlap{\leavevmode\raise.3ex\hbox{$\sqcap$}}$%
\sqcup$}}
\def\findem{\ifmmode\sqw\else{\ifhmode\unskip\fi\nobreak\hfil
\penalty50\hskip1em\null\nobreak\hfil\sqw
\parfillskip=0pt\finalhyphendemerits=0\endgraf}\fi}
\newcommand{\R}{\mathbb R}
\newcommand{\T}{{\bf T}}
\begin{document}

\title[Optimal local well-posedness for the DNLS equation]{Optimal local well-posedness for the periodic derivative nonlinear Schr\"{o}dinger equation}
\author[Yu Deng]{Yu Deng }
\address{Department of Mathematics, University of Southern California, Los Angeles,  CA 90089, USA}
\email{yudeng@usc.edu}
\author[Andrea R. Nahmod]{Andrea R. Nahmod}
\address{Department of Mathematics, University of Massachusetts, 710 N.\ Pleasant Street, Am\-herst, MA 01003 USA}
\email{nahmod@math.umass.edu}
\author[Haitian Yue]{Haitian Yue}
\address{Department of Mathematics, University of Southern California, Los Angeles,  CA 90089, USA}
\email{haitiany@usc.edu}

\subjclass[2010]{35, 42}
\thanks{Andrea R. Nahmod is partially supported by NSF-DMS-1463714 and NSF-DMS-1800852.}

\begin{abstract}

We prove local well-posedness for the periodic derivative nonlinear Schr\"odinger's equation, which is $L^2$ critical, in Fourier-Lebesgue spaces which scale like ${H}^s(\mathbb T)$ for $s>0$. In particular we close the existing gap in the subcritical theory by improving the result of Gr\"unrock and Herr \cite{GH}, which established local well-posedness in Fourier-Lebesgue spaces which scale like ${H}^s(\mathbb T)$ for $s >\frac{1}{4}$. We achieve this result by a delicate analysis of the structure of the solution and the construction of an adapted nonlinear submanifold of a suitable function space. Together these allow us to construct the unique solution to the given subcritical data.  This constructive procedure is inspired by the theory of para-controlled distributions developed by Gubinelli-Imkeller-Perkowski \cite{GIP} and  Cantellier-Chouk \cite{CCh} in the context of stochastic PDE. Our proof and results however, are purely deterministic. 
\end{abstract}

\maketitle

\section{Introduction} 

The derivative nonlinear Schr\"odinger's equation
\begin{equation} \label{dnls} i u_t  \, + \,  \partial^2_{x} u  \, = i \partial_x (|u|^2 u), 
\end{equation}
where $(t,x) \in (-T,T)\times\mathbb T$ (periodic) or $(-T, T)\times \mathbb R$ (non-periodic),  is a Hamiltonian PDE introduced as a model for the propagation of nonlinear waves in plasma physics and nonlinear optics 
\cite{SS}. It is well-known as a completely integrable system \cite{KaupNew, HayOz2, Tak1, Herr}, and in particular conserves mass and energy.  The Cauchy problem for (\ref{dnls}) is scale invariant for data in $L^2$, that is, if $u(t,x)$ is a solution then so is $u_{\lambda}(t, x) = \lambda^{\frac{1}{2}} u(\lambda^2 t, \lambda x)$ with the same $L^2$ norm. Thus {\it a priori} one expects local well-posedness for \eqref{dnls} with initial data data in $H^{s}$ for $s \geq 0$. However, while local well-posedness in $H^s$ for (\ref{dnls}) is known for $s \ge \frac{1}{2}$ \cite{Tak1,Herr}, one has \emph{ill-posedness} in $H^s$ for $s< \frac{1}{2}$  \cite{BiaLin, Tak1, Herr}.

One way to close the gap between the scaling heuristics and actual local well-posedness results is by considering data in the Fourier-Lebesgue spaces $H^\sigma_p$, where $p\geq 2$. These spaces are defined as
\begin{equation}\label{fourierlebesgue} \Vert u_0 \Vert_{ H_p^\sigma } :=   \|\langle k  \rangle^{\sigma}\widehat u_0(k) \|_{L_k^p}  \end{equation} (with $L_k^p$ replaced by $\ell_k^p$ in the periodic case). These spaces have naturally arisen in the literature and we refer the reader to  e.g. \cite{Hormander, VarVe, Gr, Christ, Gr-wave, GrNa} for some instances. 
Note in particular, that in one dimension the Fourier-Lebesgue space $H_p^\sigma$ has the same scaling{\footnote{Here and henceforth we mean that the homogenous part of the Fourier-Lebesgue norm scales like the corresponding homogeneous Sobolev norm}} as the Sobolev space $H^s$ for \begin{equation}\label{scale}s=\sigma+\frac{1}{p}-\frac{1}{2}; \end{equation} in particular, $H^{\frac{1}{2}}_{\infty}$ has the scaling of $L^2$, and $H^{\frac{1}{2}}_2 = H^{\frac{1}{2}}$.

In the non-periodic case, Gr\"unrock \cite{Gr} proved optimal local well-posedness for (\ref{dnls}) in $H^\sigma_p(\mathbb{R})$ for $\sigma\geq \frac{1}{2}$ and $p<\infty$, which allows the corresponding Sobolev regularity $s$ to be arbitrarily close to $0$, thus covering the full subcritical range. The proof combines the gauge transformation introduced in \cite{Hay} (used also in \cite{HayOz1, HayOz2, Tak1}) and new bilinear and trilinear estimates for the gauged equation in an appropriate variant of Bourgain's Fourier restriction norm spaces \cite{B8} (see Section \ref{function} below for details) which follow from the dispersion and the smoothing properties of the Schr\"odinger propagator on $\mathbb R$.

In the periodic case, however, local well-posedness for (\ref{dnls}) in $H_p^\sigma(\mathbb{T})$ is only known for $\sigma\geq \frac{1}{2}$ and $2 \leq p<4$, which scaling-wise correspond to Sobolev regularity $s > \frac{1}{4}$. This is the work of Gr\"unrock and Herr \cite{GH}. Their proof is based on the adapted periodic gauge transformation in \cite{Herr} and new multilinear estimates for the gauged equation in adapted variants of the Fourier restriction norm spaces.  Moreover, it is proved in \cite{GH} that the crucial multilinear estimates{\footnote{More precisely the trilinear estimates containing as one of their inputs the derivative term.}} become false when $p\geq 4$, so this result as well as the existing gap in the local well-posedness theory between $s>1/4$ and the scaling prediction $s>0$, cannot be improved \emph{within the framework of} \cite{GH}.
\smallskip

In this paper we close this existing gap in the periodic case. More precisely, we prove optimal local well-posedness for (\ref{dnls}) in $H_p^{\sigma}(\mathbb{T})$ for $\sigma\geq\frac{1}{2}$ and $p<\infty$ which covers the entire subcritical regime, hence yielding optimal local well-posedness. Our main theorem is stated as follows:
\begin{thm}\label{main} Fix $\sigma\geq\frac{1}{2}$ and $p_0<\infty$. For any $A>0$, there exists $T=T(p_0,A)>0$, such that if $\|u_0\|_{H_{p_0}^{\sigma}}\leq A$, then there exists a unique solution $u\in \mathcal{Z}\subset C_t^0H_{p_0}^{\sigma}(J)$ to (\ref{dnls}) with initial data $u(0)=u_0$, where $J=[-T,T]$. Here $\mathcal{Z}$ is an explicitly defined sub-manifold of $C_t^0H_{p_0}^{\sigma}(J)$, see Definition \ref{defset} below. The map $u_0\mapsto u$ is continuous with respect to the $C_t^0H_{p_0}^{\sigma}(J)$ metric.
\end{thm}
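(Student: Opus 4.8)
The plan is to first normalize \eqref{dnls} via the periodic gauge transformation of Herr \cite{Herr}, and then — since the multilinear estimates underpinning the Picard iteration of \cite{GH} are provably false for $p\ge 4$ — to build the solution not as a fixed point in a linear space but as a point on an explicitly constructed nonlinear submanifold $\mathcal{Z}$, in the spirit of the para-controlled calculus of \cite{GIP, CCh}. All multilinear estimates will be arranged to be uniform for $T$ small, so that the existence time depends only on $(p_0,A)$.

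Step one: apply the gauge transformation $u\mapsto w=\mathcal{G}u$, which conjugates \eqref{dnls} to an equation for $w$ whose nonlinearity, in place of $i\partial_x(|u|^2u)$, carries a cubic term in which the derivative falls on the lowest-frequency factor, together with a quintic term and harmless mean-value corrections; then recast this as a Duhamel integral equation in an adapted Fourier restriction norm space of Bourgain type built over the Fourier-Lebesgue scale. Even after gauging, at $\sigma=\frac{1}{2}$ and $p$ large the cubic term is not directly estimable: the obstruction is precisely the family of (near-)resonant interactions isolated in \cite{GH}, morally those in which one very high frequency interacts with two comparable, much lower frequencies and the modulation provides no gain.

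Step two: peel these bad interactions off the Duhamel term into an \emph{explicit} expression. I would posit the ansatz
\[
w \;=\; e^{it\partial_x^2}w_0 \;+\; \mathcal{N}\big[e^{it\partial_x^2}w_0\big] \;+\; v,
\]
where $\mathcal{N}[\cdot]$ is a multilinear operator — genuinely nonlinear in $w_0$ — designed to reproduce exactly the worst resonant output of the cubic Duhamel integral applied to the linear evolution, and $v$ is a remainder. Substituting this ansatz and subtracting the contributions cancelled by $\mathcal{N}[e^{it\partial_x^2}w_0]$ leaves an equation for $v$ whose nonlinearity consists only of ``good'' multilinear expressions — ones that gain regularity, or gain a power of $T$, or avoid the bad resonance — for which a contraction estimate does hold for every $p<\infty$. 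The set $\mathcal{Z}$ of Definition \ref{defset} is then the collection of $u=\mathcal{G}^{-1}w$ with $w$ of the above form and $v$ ranging over a small ball in the good space; since $w_0\mapsto \mathcal{N}[e^{it\partial_x^2}w_0]$ is nonlinear, $\mathcal{Z}$ is a submanifold and not a linear subspace. A fixed-point argument for $v$ yields the unique $v$, hence the unique $u\in\mathcal{Z}$, and differencing the fixed-point equations (together with the bounds on $\mathcal{N}$) gives continuity of $u_0\mapsto u$ in the $C_t^0H_{p_0}^{\sigma}(J)$ metric. It may be necessary to iterate this decomposition once or twice more, expanding $v$ itself, to remove secondary resonances before the remainder equation closes.

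The main obstacle — and the technical heart of the argument — is the multilinear analysis: one must (i) identify the precise resonant structure that must enter $\mathcal{N}[\cdot]$ so that the leftover $v$-nonlinearity is estimable down to $s>0$; (ii) show that $\mathcal{N}[e^{it\partial_x^2}w_0]$ is itself well-defined and bounded in $C_t^0H_{p_0}^{\sigma}(J)$, which is delicate precisely because it is the term that fails the naive estimate, so one must exploit cancellation together with divisor/counting bounds for the resonance function on $\mathbb{T}$; and (iii) prove the remaining multilinear estimates for $v$ uniformly for all $p<\infty$. Verifying that the gauge transformation behaves well on the Fourier-Lebesgue spaces and that the submanifold $\mathcal{Z}$ is consistent with the flow are further, though more routine, points.
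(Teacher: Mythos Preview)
Your overall architecture---gauge transform, then construct the solution on a nonlinear submanifold to bypass the failure of the trilinear estimate---is right, but the specific ansatz you propose has a genuine gap.

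You posit $w = e^{it\partial_x^2}w_0 + \mathcal{N}[e^{it\partial_x^2}w_0] + v$ with $\mathcal{N}$ an explicit multilinear expression in the \emph{linear evolution} and $v$ a remainder in a smoother space. This is precisely the Bourgain/Da~Prato--Debussche paradigm, and the paper explicitly explains (Section~1.3) why it fails here: in the absence of randomness there is no Gaussian gain to exploit, and calculations show that one would need to expand to very high---possibly infinite---order before the remainder becomes smooth enough (namely lies in $H_{4-}^{1/2}$, the threshold from \cite{GH}) to close a contraction. Your ``iterate once or twice more'' is not enough. The point is that in the deterministic setting, fixing the center to be any explicit finite-order functional of $e^{it\partial_x^2}w_0$ does not produce a remainder with improved \emph{spatial} regularity.

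The paper's way out is to give up on making the center explicit and instead determine it \emph{dynamically}: the ansatz is
\[
v \;=\; w \;+\; E_N^Y(w,w,v) \;+\; E_L^Y(w,v,v),
\]
where $w$ itself is an unknown, and the gain for $w$ is not in the spatial index $\sigma$ but in the modulation index---$w$ lies in $Z_0 = X_{p_0,q_0}^{1/2,b_0}$ with $b_0\approx 1$ and $q_0\approx\infty$, a much stronger space than $Y_0 = X_{p_0,r_0}^{1/2,1/2}$. One then derives a fixed-point equation for $w$ in $Z_0(I)$ and shows it is a contraction (Proposition~\ref{main2}). Two further ingredients you are missing: (i) the Duhamel operator must be split via a time-convolution kernel $\widehat{\eta}(\Delta(t-s))$ (the operators $E_*^Y$, $E_*^X$ of Section~\ref{defofe}) that isolates the high-modulation region where one can actually gain, replacing the frequency cutoff that would be nonlocal in time; and (ii) the para-controlled pieces $E_N^Y$, $E_L^Y$ depend on $v$ as well as $w$, so the submanifold $\mathcal{W}$ is parametrized by $w$ through a secondary fixed point (Proposition~\ref{cont}), not by $w_0$ through an explicit formula.
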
 
\begin{rem}\label{property} The solution we construct solves \eqref{dnls} in the sense that it solves the integral equation
\begin{equation}\label{intg}u(t)=e^{it\partial_x^2}u_0+\int_0^te^{i(t-s)\Delta}\partial_x(|u(s)|^2u(s))\,\mathrm{d}s.\end{equation} 
 It is also the unique limit of smooth solutions: given $A>0$, and any smooth initial data $u_0$ in the $A$-ball of $H_{p_0}^{\sigma}$, the classical solution exists for time $T=T(p_0,A)>0$, and the data-to-solution map extends continuously to all of this ball. Moreover, if $p_0<4$, our solution coincides with the solution constructed in \cite{GH}, for as long as the latter exists.
\end{rem}
\begin{rem} We will only prove Theorem \ref{main} with $\sigma=\frac{1}{2}$ and $p_0\geq 4$. The extension to $\sigma>\frac{1}{2}$ is standard (see Proposition \ref{pres} for a sketch), and when $2 \leq p_0<4$ the result follows directly from \cite{Herr}.
\end{rem}

\begin{rem} Global-posedness for the Cauchy problem for \eqref{dnls} is known to hold for data in $H^s, \, s \geq \frac{1}{2}$ both on $\mathbb R$ \cite{ CKSTT1, CKSTT2, MWX} and on $\mathbb T$ \cite{ Herr, Win, Mos}. Furthermore, one has almost sure global well posedness for data in Fourier Lebesgue spaces $H^{\sigma}_p (\mathbb T)$ that have the scaling of $H^{\frac{1}{2}-\varepsilon}(\mathbb T), \, \varepsilon >0$ \cite{NORS, NRSS}.  In this paper our primary goal is to close the gap in the local well-posedness Cauchy theory.  One may then study the question of deterministic global well-posedness below $H^{\frac{1}{2}}(\mathbb T)$ 
which requires quite different techniques such as for example exploiting the integrability of the equation and seeking suitable new conservation laws below $H^{\frac{1}{2}}$.
\end{rem}

\subsection{The standard approach, and difficulties}
Generally speaking the difficulty one faces in solving \eqref{dnls} is a derivative loss arising from the term $ {i |u|^2 \, u_x } $ in the nonlinearity of \eqref{dnls}, and hence for 
low regularity data the key is to somehow make up for this loss. The first step towards this goal is a gauge transformation \cite{Hay, HayOz1, HayOz2, Tak1, Herr} which removes this bad resonant term in the nonlinearity that loses derivatives and makes the estimates uncontrollable. Matters are then reduce to studying the gauged derivative nonlinear Schr\"odinger equation which we schematically write as
\begin{equation}\label{gdnls}
(\partial_t-i\partial_x^2)v=\mathcal{C}(v,v,v)+\mathcal{Q}(v,\cdots,v),
\end{equation}
where the nonlinearity $\partial_x (|u|^2 u)$ has been transformed into the sum of the `better' cubic term \[\mathcal{C}(v,v,v) \sim  {i \overline{v}_x\cdot v^2  },\] plus a quintic term which contains no derivatives terms in it and which we momentarily neglect in this discussion as being `lower order'.   Once the bad nonlinear term is gauged away from \eqref{dnls}, the solution $v$ to (\ref{gdnls}) is constructed by a fixed point argument, which follows from proving multilinear estimates in suitably Fourier restriction norm function spaces adapted to the data space. In the non-periodic case \cite{Gr}  these spaces in conjunction with 
the dispersion and smoothing effects available on $\mathbb{R}$
suffice, as we mentioned above, to prove optimal local well-posedness{\footnote{ Local  well-posedness for the gauged equation \eqref{gdnls} implies local existence, uniqueness and continuity of the flow map for \eqref{dnls} \cite{Herr, GH}.}} for \eqref{gdnls} and hence for \eqref{dnls} in $H_p^\sigma$, where $\sigma\geq\frac{1}{2}$ and $p<\infty$.  In the periodic case \cite{GH}, however,  the authors  need to introduce a fourth parameter $q$ in the Fourier restriction norm function spaces, namely they define{\footnote{ When $p=q=2$ these spaces coincide with Bourgain's Fourier restriction norm spaces associated to the Schr\"odinger equation, and are simply denoted by $X^{s, b}$.}}
\[\|u\|_{X_{p,q}^{\sigma,b}}=\|\langle k\rangle^{\sigma}\langle\xi+k^2\rangle^b\widehat{u}(k,\xi) \|_{\ell_{k}^pL_\xi^q}\] and prove that, if $\sigma=\frac{1}{2}$ and $p<4$  the trilinear estimate
\begin{equation}\label{trilinear-ce} 
\bigg\| \int_0^t   e^{i (t-s) \partial_x^2 }  (\partial_x \overline{v}_1  \cdot v_2v_3) \, \mathrm{d}s \bigg\|_{ X^{\frac{1}{2}, b}_{p, q}} \lesssim  \prod_{j=1}^3  \| v_j\|_{ X^{\frac{1}{2}, b}_{p, q}} 
\end{equation} holds true  for $(b,q)=(\frac{1}{2}+,2)$. Furthermore, they construct explicit counterexamples showing  that  for $\sigma=\frac{1}{2}$ and $p\geq 4$, the trilinear estimate (\ref{trilinear-ce}) fails for \emph{any} choice of $(b,q)$  \cite{GH}.  In other words, when $\sigma=\frac{1}{2}$ and $p\geq 4$, which in the Sobolev scale corresponds to regularity $ 0< s\leq \frac{1}{4}$,  the local solution to (\ref{gdnls}) cannot be constructed directly by contraction mapping{\footnote{In principle, it might be possible that a trilinear estimate holds in some exotic Banach space not of form $X_{p,q}^{s,b}$ but, if not unlikely, this would at least require a rather sophisticated construction.}}. 

To prove our Theorem \ref{main} we must and will take a different approach.  After performing the gauge transformation,  our point of departure is the following observation:  let $\sigma=\frac{1}{2}$ and $p\geq 4$. If one compares the profiles of the counterexamples constructed in \cite{GH} with the profiles of the terms occurring in the formal Picard iterations of (\ref{gdnls}), then they will never coincide, although they belong to exactly the same  adapted Fourier restriction norm $X_{p,q}^{\sigma,b}$ spaces. Therefore it is reasonable to imagine that, the solution $v$ to (\ref{gdnls}) still exists in one of these spaces --say $b=\frac{1}{2}+$ and $q=2$ for definiteness-- but will have some \emph{specific structure} such that it precisely avoids the counterexamples constructed in \cite{GH}. 
To that effect we will construct $v$ in a \emph{nonlinear submanifold} $\mathcal{W}$ of the Banach space $X_{p,2}^{\sigma,\frac{1}{2}+}$ containing functions of a specific structure whence the trilinear estimate (\ref{trilinear-ce}) will actually hold true $\sigma=\frac{1}{2}$ and $ p \geq 4 $.  The heart of this paper will be to identify this precise structure. 

In order to motivate our approach we take a step back and review some of the methods developed in the probabilistic (random data, or stochastically forced) context. We  note in passing that an immediate corollary of our main Theorem \ref{main} is that for random initial data of form 
\begin{equation}\label{random-data}  u^{\omega}(0) := \sum_{k \in \mathbb Z} \, \frac{g_k(\omega)}{ \langle k \rangle^{\frac{1}{2} + \theta}} \, e^{i k x} \end{equation} where $g_k$ are i.i.d Gaussian random variables, $ \langle k \rangle:=  \sqrt{1 + |k|^2}$,  and $\theta >0$ is fixed but arbitrary,  the solution to \eqref{dnls} or equivalently \eqref{gdnls} almost surely exists for a positive time.
\subsection{Ideas from probabilistic setting}
In the probabilistic PDE context (i.e. random data theory for dispersive and wave equations or parabolic stochastic PDEs) 
where one deals with randomized initial data or a random forcing term,  the idea of exploiting the structure of the solution has been used for a long time, see for example Bourgain \cite{B5, B6} in the context of the defocusing (Wick ordered) cubic nonlinear Schr\"odinger equation{\footnote{ See also, more recent work by Burq and Tzvetkov in the context of nonlinear wave equations \cite{BTz1}.}}, and Da Prato-Debussche \cite{DPD, DPD2} in the context of the stochastic Navier-Stokes and the stochastic quantization equations. More recently this idea has been exploited in a large body of work by many authors. See for example \cite{B5, B6,  Th, CoOh, BTT, Deng2, NS, HiyOk, DLM, BOP2, BOP, Yue} and references therein for some works on the random data local Cauchy theory in the context of  nonlinear Schr\"odinger equations. The key point is that, if one considers the linear evolution of random data (or random forcing), 
then almost surely, it enjoys much better estimates than arbitrary functions of the same regularity. In turn this allows one to \emph{re-center} the solution around the linear evolution of random data (or around higher order iterates), and conclude that the difference between the two belongs to a Banach space of higher regularity than the one dictated by the (weaker) regularity of the random initial data.

For example, in Bourgain \cite{B5}, which deals with the cubic nonlinear Schr\"{o}dinger equation on $\mathbb{T}^2$, the initial data $\phi^{\omega}$ belongs to Sobolev $H^{-\varepsilon}$ almost surely, for any $\varepsilon>0$, whence its linear evolution only belongs to the Fourier restriction norm spaces $X^{-\varepsilon,\frac{1}{2}+}$ almost surely.  On the other hand, the equation is $L^2$ critical, so if one were to try to prove local well posedness via a fixed point argument, the needed trilinear estimates would fail for arbitrary functions in $X^{-\varepsilon, \frac{1}{2}}$.  Instead, Bourgain \cite{B5} constructed solutions $u$ centered around the random linear evolution $\Psi^{\omega} := e^{i t \Delta} \phi^{\omega}$. That is of the form:
\begin{equation}\label{B-trick}
u =    \Psi^{\omega}  +  R, \quad \quad \text{where} \quad  \quad  (i\partial_t + \Delta )R =  \mathcal N( \Psi^{\omega}  +  R),  \end{equation}
and where we have denoted by $\mathcal N$  the Wick ordered cubic nonlinearity. 
Then, almost surely, the needed trilinear estimates for  $\mathcal N( \Psi^{\omega}  +  R)$ hold true and the solution $R$ to the difference equation in \eqref{B-trick} can be constructed in a smoother space $X^{\varepsilon,\frac{1}{2}+}$ by a contraction mapping argument. Heuristically, one should view \eqref{B-trick} as a `hybrid equation' which on the one hand behaves subcritically in $R$, thus locally well-posed in $H^{\varepsilon}$;  while on the other hand the random linear evolutions $\Psi^{\omega}$ behave better than an arbitrary function in $X^{-\varepsilon,\frac{1}{2}+}$ when they are entries in $\mathcal N( \Psi^{\omega}  +  R)$ thanks to large deviation estimates.  
A similar phenomenon happens in Da Prato-Debussche's argument for-for example-the stochastic Navier-Stokes equation on $\mathbb{T}^2$ with spacetime white noise forcing $\zeta$ \cite{DPD} where the role of $\Psi^\omega$ is replaced by $Z$, the linear evolution of white noise,  $Z_t + \Delta Z = \zeta.$

In both cases, the method can be understood as constructing solutions in a (random affine) submanifold $\mathcal{W}$ consisting of functions belonging to a ball in a smoother space, centered at the random linear evolution.

In the past few years, Gubinelli, Imkeller and Perkowski  \cite{GIP, GIP2}  (see also \cite{CCh} and \cite{GP}) developed a far-reaching generalization of this re-centering method based on the idea of {\it para-controlled distributions}. This is an analytic counterpart to the theory of regularity structures developed by Hairer \cite{Hairer, Hairer1, Hairer2, Hairer3}.  Roughly speaking, in addition to the linear evolution and possibly (suitably renormalized) higher order expressions of the linear evolution, one moves to the new `center'  terms that are `para-controlled' by such expressions. Here a function $f$ is said to be para-controlled by a function $g$ if, up to some smoother `remainder' terms, $f$ can be written as the Bony para-product between high frequencies of $g$ and low frequencies of some auxiliary function $h$, namely that\begin{equation}\label{para}
f = \Pi_{>}(g, h)+R := \sum_N  P_N g \cdot P_{\ll N} h +R, 
\end{equation} where for dyadic frequencies $N$, $P_N$ and $P_{\ll N}$ are the standard Littlewood-Paley operators projecting onto frequencies $\sim N$ and $\ll N$ respectively, 
and $R$ is smoother than $f$. An example is the (parabolic) $\Phi^4_3$ model, which is the cubic heat equation on $\mathbb T^3$ with white noise forcing \cite{GIP, CCh}, where one constructs solutions of the form 
\begin{equation}\label{phi43}
u =  Z + I( P_3(Z))  \, + \,  I \, \Pi_{>}( P_2(Z), u-Z) \, + \, R,
\end{equation} where $Z$ is the linear evolution of the white noise, $P_3(Z)$ consists of other structured components  which themselves are given in term of (suitably renormalized cubic) powers of $Z$, 
$I$ is the Duhamel operator $(\partial_t-\Delta)^{-1}$, and $R$ is a remainder, that has higher regularity. Here then the solution consists of the linear evolution $Z$, a higher order expression $I(P_3(Z))$, a para-controlled part term $I\,\Pi_{>}( Z^2, u-Z)$ and a remainder and thus belongs to a random submanifold which is much more nonlinear.

These ideas have been extensively used in various stochastic contexts in recent years by many authors. We refer the reader for example to work by  Mourrat and Weber \cite{MouWe} and to Mourrat, Weber and Xu \cite{MouWeX} and to references therein for further work in the context of the $\Phi^4_3$ model and to Chandra and Weber \cite{ChWe} and references therein for a nice survey of these ideas. See also \cite{BaiBer}.  We also refer to recent work by Gubinelli, Koch and Oh \cite{GKO, GKO2} where these ideas were applied to the stochastic nonlinear wave equation with quadratic nonlinearity in $\mathbb T^2$ and in $\mathbb T^3$ respectively.

\subsection{The deterministic context of DNLS} Inspired by the ideas in the probabilistic setting described above, in this paper we develop a new \emph{deterministic} method to describe the structure of solutions $v$ to the Cauchy initial value problem for \eqref{dnls} with data at almost critical regularity.  A review of all of the above examples suggests that, if we were in the probabilistic setting (i.e. (\ref{random-data})), we should look for solutions essentially of form
\[u=w+\textrm{(terms para-controlled by $w$)}+\textrm{(smooth remainders)},\] where $w$ is the combination of the random linear evolution, and multilinear expressions dictated by the random linear evolution. The choice of such $w$ is forced upon us (one can at most choose the order of expansion) by the fact that one needs to (and indeed can) gain from the exact Gaussian structure.

In the deterministic setting, there is no gain from randomness. One could try to mimic the probabilistic construction of para-controlled terms in previous works, and arrive at the ansatz 
\begin{equation}\label{ansatz1}
v =w\, +\, I \, \Pi_{>}^{(1)} (\partial_x  \overline{w},v,v  ) \, + \, I \,\Pi_{>}^{(2)} (\partial_x  \overline{w},w,v  ) +  \text{(smooth remainders)}, 
\end{equation} where $I$ is the Duhamel operator 
\[IF(t)=\int_0^te^{i(t-s)\partial_x^2}F(s)\,\mathrm{d}s\] and the cubic para-products are defined by
\begin{equation}\label{ansatz-new}
\Pi_{>}^{(1)} (\partial_x  \overline{w},v,v  )  = \sum_N  P_N \partial_x\overline{w}\cdot ( P_{\ll N} v)^2 ,\quad \Pi_{>}^{(2)} (\partial_x  \overline{w},w,v  ) = \sum_N  P_N \partial_x\overline{w}\cdot  (P_{N} wP_{\ll N}v).
\end{equation} However unlike the probabilistic setting, we are no longer guided by the
Gaussians and need to find the right $w$ ourselves.   The naive choice of linear evolution for $w$ is doomed to fail, and even if one includes multilinear expressions of the linear evolution, calculations show that in the absence of randomness, one would need to expand to a very high (if not infinite) order before unearthing `smooth remainders' that have enough regularity (namely $H_{4-}^{\frac{1}{2}}$ due to \cite{GH}) to close the estimates. With a high order of expansion, the terms involved then quickly become too complex to control in our setting. Work in this direction was considered by the second author together with Chanillo, Czubak, Mendelson and Staffilani in the context of the nonlinear wave equation with quadratic derivative nonlinearities, see \cite{CCMNS} for details.

To get out of this maze, in this paper we will give up the idea of fixing $w$ to be some explicit multilinear expression dictated by the linear evolution. Instead we will construct this $w$, which para-controls the solution $v$, \emph{dynamically}. That is, we take all the linear and higher order terms in the above-mentioned expansion, as well as the presumed smooth error terms, and put them into a single `center' $w$. This leads to the new ansatz
\begin{equation}\label{ansatz-2}
v\, =\, w \, + \,   I \, \Pi_{>}^{(1)} (\partial_x\overline{w},v,v) \,+\,    I \, \Pi_{>}^{(2)} (\partial_x\overline{w},w,v),
\end{equation} where  $w$ is the `center'  which itself moves together with $v$ and belongs to some subspace of $X^{\sigma, \frac{1}{2}+}_{p, 2}, \, p \geq 4$.   As it turns out, uncovering the final structure of $v$ is slightly more complex but \eqref{ansatz-2} conveys the main philosophy{\footnote{ Note that the decomposition of v is nonlinear both in w and in the para-controlled terms.}} (see Section \ref{structure} for details).  Since $w$ does not have a specific multilinear structure, one difficulty is identifying the right space where $w$ will lie. 
By carefully analyzing the terms that are expected to appear in $w$, we can specify this space{\footnote{ More precisely for $\delta >0$ and $q = \frac{1}{4\delta}$, the right space is $X^{\frac{1}{2}, 1-2 \delta}_{p, q}$. See Section 3 for precise definitions.}} to be $X^{\frac{1}{2}, 1-}_{p, \infty-}$.

\smallskip

A final complication comes from the fact that unlike the parabolic setting where the Duhamel operator $I$ automatically gains two derivatives, such gain is not automatic for the Schr\"odinger equation. Rather, it has to be manually induced by performing a frequency cut-off also in the Fourier variable of time, so as to restrict to the region where the parabolic weight in frequency (which is the one that appears in the $X^{s,b}_{p,q}$ norms) is large. In principle this would require that we replace in our ansatz \eqref{ansatz-2} the Duhamel operator $I$ by a frequency cut-off version of it, which would introduce non-locality in time which could be incompatible with local in time solutions. Fortunately the frequency cut-off can be substituted by a suitable time convolution
\begin{equation}\label{time-localiz}
\widetilde I F(t) \,: = \, \int_0^t \, \chi( k^2 (t-s)) e^{i (t-s) \partial_x^2} F(s) ds 
\end{equation} which has the same effect for some carefully chosen $\chi$. 
See Section \ref{defofe} for details. 

\smallskip

With the above discussion, we can now fix the submanifold $\mathcal W$, in which the solution $v$ to \eqref{gdnls} is uniquely constructed, to be
\begin{equation}\label{subm}
\mathcal W = \big\{v \in X^{\sigma, \frac{1}{2}+}_{p, 2}\, :\, v = w +    \widetilde I \, \Pi_{>}^{(1)} (\partial_x\overline{w},v,v)+    \widetilde I \, \Pi_{>}^{(2)} (\partial_x\overline{w},w,v), \quad w \in X^{\frac{1}{2}, 1-}_{p, \infty-} \big\}.
\end{equation} We will show that the submanifold $\mathcal{W}$ is well-defined, parametrized by $w\in X^{\frac{1}{2}, 1-}_{p, \infty-}$, and that the trilinear estimates (\ref{trilinear-ce}), which fail for arbitrary input functions in $X^{\sigma, \frac{1}{2}+}_{p, 2}$, $p \geq 4$ are actually true for input functions in $\mathcal{W}$. These together will allow one to construct the solution $v\in\mathcal{W}$ by a contraction mapping argument. Finally, by inverting the gauge transform, one can construct the solution $u$ to (\ref{dnls}) in $\mathcal{Z}$, which is the preimage of $\mathcal{W}$ under the gauge transform. See Section \ref{structure} for details.

\smallskip 

\begin{rem}
We conclude this introductory discussion by noting that there is a large body of work that has contributed to our current understanding of the Cauchy problem for \eqref{dnls} for data in the Sobolev spaces $H^s, \, s \geq \frac{1}{2}$ both in the periodic and non-periodic settings; we refer the reader to \cite{KaupNew, TF, Hay, HayOz1, HayOz2, Oz, Tak1, Tak2,  BiaLin, CKSTT1, CKSTT2, Herr, Win,  MWX, MoOh, Pelin, Jenkins1, Jenkins2, MoYo} and references therein for a more comprehensive treatment. 
\end{rem}

\medskip 

\subsection{Plan of the paper}
 The paper is organized as follows. In Section \ref{thegauge} we recall the periodic gauge transformation used in \cite{GH} and perform such transformation to \eqref{dnls}. Then, we lay out the set up and frequency interactions splitting of the nonlinearities in the gauged derivative Schr\"odinger equation which will guide our analysis. 
In Section \ref{function} we define and set up our function spaces, prove the main linear estimates and prove an improved divisor bound which is used in some our estimates. In Section \ref{structure} we discuss the structure of the solution, identifying the para-controlling terms and the precise solution submanifold $\mathcal W$ where $v$ will belong.  In Section \ref{recoverv} we prove a prior bounds for the para-controlling terms. Section \ref{recoverw} constitutes the heart of the paper. Here we find $w$ and prove all the underlying multilinear estimates involved in its construction. In the course of the proof we show in particular that all relevant nonlinearities are well defined as space-time distributions whence the integral equation \eqref{intg} for $u$ will be equivalent to the integral equation formulation of \eqref{gdnls} (see Section \ref{setup} for details). 
 Finally in Section \ref{preg} we prove a preservation of regularity result.

\medskip

\subsection{Notations and parameters}\label{not} We will use the notation
\[\mathbb{P}_0h=\frac{1}{2\pi}\int_{\mathbb{T}}h\,\mathrm{d}x,\quad \mathbb{P}_{\neq 0}h=h-\mathbb{P}_0h.
\] The space, time and spacetime Fourier transforms are respectively defined as \[\widehat{u}(k)=\mathscr{F}_xu(k)=\frac{1}{2\pi}\int_{\mathbb{T}}e^{-ikx}u(x)\,\mathrm{d}x,\quad \widehat{u}(\xi)=\mathscr{F}_tu(\xi)=\frac{1}{2\pi}\int_{\mathbb{R}}e^{-i\xi t}u(t)\,\mathrm{d}t,\]\[\widehat{u}(k,\xi)=\mathscr{F}u(k,\xi)=\frac{1}{(2\pi)^2}\int_{\mathbb{R}\times\mathbb{T}}e^{-i(kx+\xi t)}u(t,x)\,\mathrm{d}t\mathrm{d}x.\] so $\mathscr{F}$ is reserved for the spacetime Fourier transform. As for $\widehat{u}$, whether it means space, time or spacetime Fourier transform will be clear from the context. The integral over the set \[\{(\lambda_1,\cdots,\lambda_r):\lambda_1\pm\cdots\pm\lambda_r=\mu\}\] for fixed $\mu$ will be with respect to the Lebesgue measure $\mathrm{d}\lambda_1\cdots\mathrm{d}\lambda_{r-1}$.  
We denote by $\mathbf{1}_{P}$ the characteristic function of a set or property $P$.
\smallskip

Recall that $p_0$ is fixed; we will fix a small parameter $0<\delta\ll 1$ depending on $p_0$, and define the other parameters $(b_0,b_1,q_0,q_1,r_0,r_1,r_2)$ as follows: \begin{equation}\label{param}b_0=1-2\delta,\quad b_1=1-\delta,\quad q_0=\frac{1}{4\delta},\quad q_1=\frac{1}{(4.5)\delta},\quad \frac{1}{r_0}=\frac{1}{2}+\delta,\quad \frac{1}{r_1}=\frac{1}{2}+2\delta,\quad \frac{1}{r_2}=\frac{1}{2}+3\delta.\end{equation} We also use $\theta$ to denote a generic positive quantity that is sufficiently small depending on $\delta$ (so $\theta$ may have different values at different instances.) 
\smallskip

We will fix $A$ as in the statement of Theorem \ref{main}, and let $A_1$ be large depending on $A$, $A_2$ be large depending on $A_1$, etc. All implicit constants below will depend on these $A_j$'s and the above parameters. The time length $T$ will also be fixed, and small enough depending on these implicit constants.

\section{The gauge transform and other reductions}\label{thegauge}
\subsection{The gauge transform} Notice that $\mathbb{P}_0|u|^2$ is conserved under the flow of (\ref{dnls}). Consider the gauge transform, see \cite{GH},
\begin{equation}
\label{gauge}v(t,x)=(\mathcal{G}u)(t,x):=(\mathcal{G}_0u)\big(t,x-2\mathbb{P}_0|u|^2 \, t\big),\quad (\mathcal{G}_0u)(t,x):=e^{-iG(t,x)}\cdot u(t,x),
\end{equation} where \begin{equation}\label{defiu}G=\partial_{x}^{-1}\mathbb{P}_{\neq 0}(|u|^2)
\end{equation} is the unique mean-zero antiderivative of $\mathbb{P}_{\neq 0}|u|^2$. This gauge transform is easily inverted, with inverse given by \begin{equation}\label{invert1}u(t,x)=(\mathcal{G}^{-1}v)(t,x)=e^{iG(t,x)}\cdot v_0(t,x),\quad G=\partial_x^{-1}\mathbb{P}_{\neq 0}|v_0|^2,
\end{equation} where 
\begin{equation}\label{invert2}v_0(t,x)=v(t,x+2\mathbb{P}_0|v|^2\, t).
\end{equation}
\begin{prop}\label{transform} The maps $\mathcal{G}$ and $\mathcal{G}^{-1}$ are continuous from $C_t^0H_{p_0}^{\frac{1}{2}}(J)$ to itself for any interval $J$, and map bounded sets to bounded sets.
\end{prop}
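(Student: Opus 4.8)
The plan is to show continuity of $\mathcal{G}$ and $\mathcal{G}^{-1}$ by factoring each into three elementary operations and checking each one is continuous on $C_t^0 H_{p_0}^{1/2}(J)$, mapping bounded sets to bounded sets. Reading off \eqref{gauge}--\eqref{invert2}, the operations involved are: (i) the pointwise multiplication $u \mapsto |u|^2 = u\overline{u}$, followed by the linear antiderivative $\mathbb{P}_{\neq 0}(\cdot) \mapsto \partial_x^{-1}\mathbb{P}_{\neq 0}(\cdot)$, which produces $G$; (ii) the nonlinear map $u \mapsto e^{-iG(t,x)} u$ where $G$ depends on $u$ as in (i); and (iii) the spatial translation by the (real, time-dependent) shift $-2\mathbb{P}_0|u|^2\, t$, again with the shift depending on $u$. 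For $\mathcal{G}^{-1}$ the structure is the mirror image: translate first by $+2\mathbb{P}_0|v|^2\,t$ to get $v_0$, form $G$ from $v_0$, and multiply by $e^{iG}$. So the whole proof reduces to three lemmas: an algebra/product estimate, an exponential-map estimate, and a translation-continuity estimate, each at regularity $H_{p_0}^{1/2}(\mathbb{T})$ uniformly in $t\in J$.

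First I would record that $H_{p_0}^{1/2}(\mathbb{T}) = \{f : \langle k\rangle^{1/2}\widehat f(k) \in \ell^{p_0}\}$ is a Banach algebra for $p_0 < \infty$: since $1/2 > 0$ and we are on the compact torus, the standard paraproduct/Young-inequality argument gives $\|fg\|_{H_{p_0}^{1/2}} \lesssim \|f\|_{H_{p_0}^{1/2}}\|g\|_{H_{p_0}^{1/2}}$, hence $\||u|^2\|_{H_{p_0}^{1/2}} \lesssim \|u\|_{H_{p_0}^{1/2}}^2$ and, subtracting, $\||u|^2 - |u'|^2\|_{H_{p_0}^{1/2}} \lesssim (\|u\|_{H_{p_0}^{1/2}} + \|u'\|_{H_{p_0}^{1/2}})\|u-u'\|_{H_{p_0}^{1/2}}$; taking $\sup_{t\in J}$ transfers everything to $C_t^0 H_{p_0}^{1/2}(J)$. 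The operator $\partial_x^{-1}\mathbb{P}_{\neq 0}$ is bounded from $H_{p_0}^{1/2}$ to $H_{p_0}^{3/2}$ (it multiplies the $k\neq 0$ Fourier coefficients by $1/(ik)$, which improves the weight), so $G = \partial_x^{-1}\mathbb{P}_{\neq 0}|u|^2$ lies in $C_t^0 H_{p_0}^{3/2}(J) \hookrightarrow C_{t,x}^0$ with the same Lipschitz-in-$u$ bound. For the exponential, since $H_{p_0}^{3/2}$ is a Banach algebra containing the constants, the series $e^{-iG} = \sum_{n\ge 0}(-iG)^n/n!$ converges in $H_{p_0}^{3/2}$ with $\|e^{-iG}\|_{H_{p_0}^{3/2}} \le e^{\|G\|_{H_{p_0}^{3/2}}}$, and $\|e^{-iG} - e^{-iG'}\|_{H_{p_0}^{3/2}} \lesssim e^{\|G\|+\|G'\|}\|G - G'\|_{H_{p_0}^{3/2}}$; multiplying the $H_{p_0}^{3/2}$ factor $e^{-iG}$ against the $H_{p_0}^{1/2}$ factor $u$ (again by the product estimate, with room to spare) gives $\mathcal{G}_0 u = e^{-iG}u \in C_t^0 H_{p_0}^{1/2}(J)$, bounded on bounded sets and continuous in $u$.

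It remains to handle the spatial translation $x \mapsto x - 2\mathbb{P}_0|u|^2\, t =: x - a(u,t)$, where $a(u,t)$ is real and, since $|\mathbb{P}_0|u|^2| \le \|u\|_{L^2}^2 \lesssim \|u\|_{H_{p_0}^{1/2}}^2$ and $t\in J$ is bounded, satisfies $|a(u,t)| \le C(\|u\|_{C_t^0 H_{p_0}^{1/2}}, |J|)$ with the analogous Lipschitz bound $|a(u,t) - a(u',t)| \lesssim (\|u\| + \|u'\|)\|u - u'\|$. Translation by a fixed amount is an isometry of $H_{p_0}^{1/2}(\mathbb{T})$ (it multiplies $\widehat f(k)$ by $e^{-ika}$, preserving the $\ell^{p_0}$ weighted norm), so the only real content is \emph{continuity of the shifted function in the shift}. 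For a fixed $g \in H_{p_0}^{1/2}$, $a \mapsto g(\cdot - a)$ is continuous into $H_{p_0}^{1/2}$: this is the standard density argument — it holds trivially for trigonometric polynomials (finite Fourier sums, where $\sum_k |e^{-ika}-e^{-ika'}|^{p_0}\langle k\rangle^{p_0/2}|\widehat g(k)|^{p_0} \to 0$ is a finite sum), and polynomials are dense in $H_{p_0}^{1/2}$ since $p_0 < \infty$, while translation is uniformly bounded. Combining: writing $\mathcal{G}u = T_{a(u,t)}(\mathcal{G}_0 u)$ with $T_a$ the translation, one controls $\mathcal{G}u - \mathcal{G}u'$ by $\|T_{a(u)}(\mathcal{G}_0 u - \mathcal{G}_0 u')\|_{H_{p_0}^{1/2}} + \|(T_{a(u)} - T_{a(u')})(\mathcal{G}_0 u')\|_{H_{p_0}^{1/2}}$; the first term is bounded by Steps (i)--(ii) above, and the second is small when $\|u - u'\|$ is small because $a(u,t) \to a(u',t)$ uniformly in $t$ and, on the compact set $\{\mathcal{G}_0 u'(t) : t\in J\} \subset H_{p_0}^{1/2}$, the translation continuity is uniform (equicontinuity of a compact family). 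The argument for $\mathcal{G}^{-1}$ via \eqref{invert1}--\eqref{invert2} is identical with the order of the translation and the multiplication exchanged, using $|v_0|^2$ in place of $|u|^2$, noting $\mathbb{P}_0 |v_0|^2 = \mathbb{P}_0|v|^2$ is translation-invariant so the shift is well-defined. I expect the mild subtlety — the only place one must be slightly careful rather than purely algebraic — to be exactly this last point: the translation amount depends on $u$, so one cannot simply say "translation is an isometry"; one needs the uniform-in-$t$ convergence of the shift together with compactness of $\{\mathcal{G}_0 u'(t)\}_{t\in J}$ (continuous image of the compact interval $J$) to upgrade pointwise translation-continuity to the estimate that closes the proof.
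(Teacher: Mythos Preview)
Your decomposition of $\mathcal{G}$ into the gauge multiplication $\mathcal{G}_0$ and the spatial translation mirrors the paper's proof, and your treatment of the translation step (isometry plus continuity of the shift via density and compactness of $\{\mathcal{G}_0 u'(t):t\in J\}$) is correct. However, there is a genuine gap in your treatment of $\mathcal{G}_0$: the assertion that $H_{p_0}^{1/2}(\mathbb{T})$ is a Banach algebra is \emph{false} for every $p_0\geq 2$, which is the entire range under consideration (and in particular for $p_0\geq 4$, the regime of the paper). The Young-inequality argument you invoke gives the algebra property for $H_p^s(\mathbb{T})$ only when $s>1/p'$; with $s=\tfrac12$ this forces $p<2$. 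Concretely, for $p_0>2$ take
\[
f_N=N^{-\frac12-\frac{1}{p_0}}\sum_{N\leq k\leq 2N}e^{ikx},
\]
so that $\|f_N\|_{H_{p_0}^{1/2}}\sim 1$, while a direct count gives $\widehat{f_N^2}(m)\sim N^{-2/p_0}$ for $m\sim 3N$ and hence $\|f_N^2\|_{H_{p_0}^{1/2}}\sim N^{\frac12-\frac{1}{p_0}}\to\infty$. Thus you cannot conclude $|u|^2\in H_{p_0}^{1/2}$, nor $G\in H_{p_0}^{3/2}$, and the chain through $e^{-iG}\in H_{p_0}^{3/2}$ collapses at its first link.

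The paper avoids this by citing \cite{GH}, Lemmas~6.2--6.3, for the fact that $\mathcal{G}_0$ is locally bi-Lipschitz on $H_{p_0}^{1/2}$ and $\mathcal{G}_1$ is a homeomorphism on $C_t^0H_{p_0}^{1/2}(J)$. The argument in \cite{GH} does not go through a naive algebra property; it exploits the specific structure of $e^{-iG}$ (the full derivative gained by $\partial_x^{-1}$ over $|u|^2$, together with $|e^{-iG}|=1$) via multilinear estimates tailored to the Fourier--Lebesgue scale. If you want a self-contained proof, that is the route to take; the shortcut ``$H_{p_0}^{1/2}$ is an algebra'' is simply unavailable here.
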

\begin{proof} Notice that $\mathcal{G}=\mathcal{G}_1\mathcal{G}_0$, where
\[\mathcal{G}_0u=\exp(-i\partial_x^{-1}\mathbb{P}_{\neq 0}|u|^2)\cdot u,\quad \mathcal{G}_0^{-1}u=\exp(i\partial_x^{-1}\mathbb{P}_{\neq 0}|u|^2)\cdot u,\] and 
\[\mathcal{G}_1u(t,x)=u(t,x-2\mathbb{P}_0|u|^2\, t),\quad \mathcal{G}_1^{-1}u(t,x)=u(t,x+2\mathbb{P}_0|u|^2\, t).\] In \cite{GH}, Lemma 6.2 and Lemma 6.3, it is proved that $\mathcal{G}_0:H_{p_0}^{\frac{1}{2}}\to H_{p_0}^{\frac{1}{2}}$ is locally bi-Lipschitz, and that $\mathcal{G}_1:C_t^0H_{p_0}^{\frac{1}{2}}(J)\to C_t^0H_{p_0}^{\frac{1}{2}}(J)$ is a homeomorphism. Moreover, it is easily checked that \[\|\mathcal{G}_1u\|_{C_t^0H_{p_0}^{\frac{1}{2}}(J)}=\|\mathcal{G}_1^{-1}u\|_{C_t^0H_{p_0}^{\frac{1}{2}}(J)}=\|u\|_{C_t^0H_{p_0}^{\frac{1}{2}}(J)},\] so $\mathcal{G}$ and $\mathcal{G}^{-1}$ map bounded sets to bounded sets.
\end{proof}
\subsection{The transformed equation}
We calculate that $v=\mathcal{G}u$ satisfies the equation
\begin{equation}\label{dnls2}
(\partial_t-i\partial_x^2)v=\mathcal{C}(v,v,v)+\mathcal{Q}(v,\cdots,v),
\end{equation} where the cubic and quintic nonlinearities are defined as
\begin{equation}\label{cub}\mathscr{F}_x\mathcal{C}(v_1,v_2,v_3)(k)=\sum_{\mathbb{V}_3}k_1M_3(k,k_1,k_2,k_3)\cdot\overline{\widehat{v_1}(k_1)}\widehat{v_2}(k_2)\widehat{v_3}(k_3),
\end{equation} and 
\begin{equation}\label{quin}\mathscr{F}_x\mathcal{Q}(v_1,\cdots,v_5)(k)=\sum_{\mathbb{V}_5}M_5(k,k_1,\cdots,k_5)\cdot\widehat{v_1}(k_1)\overline{\widehat{v_2}(k_2)}\widehat{v_3}(k_3)\overline{\widehat{v_4}(k_4)}\widehat{v_5}(k_5).
\end{equation} The sets $\mathbb{V}_3$ and $\mathbb{V}_5$ are defined by
\begin{equation}\label{sets}\begin{split}\mathbb{V}_3&=\big\{(k_1,k_2,k_3)\in\mathbb{Z}^3:k_2+k_3-k_1=k,|k_2|\geq |k_3|,k\not\in\{k_2,k_3\}\big\}\cup\{(k,k,k)\},\\
\mathbb{V}_5&=\big\{(k_1,\cdots,k_5)\in\mathbb{Z}^5:k_1-k_2+k_3-k_4+k_5=k\big\},
\end{split}\end{equation} and the coefficients $M_j$ are explicitly defined functions, with $|M_j|\lesssim 1$ for $j\in\{3,5\}$. They also have the right symmetry so that (\ref{dnls2}) conserves $\mathbb{P}_0|v|^2$. See \cite{GH} for the precise formulas.

\smallskip

\begin{rem} For integers $k$, $k_1$, $k_2$ and $k_3$ such that $k_2+k_3-k_1=k$, we will rely throughout the proofs on the quantity  $\Delta :=k^2+k_1^2-k_2^2-k_3^2$.
\end{rem}

\subsection{Splitting the cubic nonlinearity} We will further split the cubic nonlinearity $\mathcal{C}$ into four parts: a ``high-high'' part\footnote{Here ``high'' and ``low'' are with respect to the frequencies $k_2$ and $k_3$.}, a ``low-low'' part, a ``semilinear'' part and a ``non-resonant'' part. Decompose $\mathbb{V}_3$ into four subsets:
\begin{equation}\label{subsets}
\begin{aligned}
\mathbb{X}_{H}&=\big\{(k_1,k_2,k_3)\in \mathbb{V}_3:|k_3|\geq 2^{-20}|k|\big\},\\
\mathbb{X}_{L}&=\big\{(k_1,k_2,k_3)\in \mathbb{V}_3:|k_2|<2^{-20}|k|\big\},\\
\mathbb{X}_{S}&=\big\{(k_1,k_2,k_3)\in \mathbb{V}_3:2^{-10}|k_1|\leq |k_3|<2^{-20}|k|\big\},\\
\mathbb{X}_{N}&=\mathbb{V}_3-(\mathbb{X}_{H}\cup\mathbb{X}_{L}\cup \mathbb{X}_S).
\end{aligned}
\end{equation} The following properties of this splitting are elementary and so we omit the proof.
\begin{prop}\label{splitting} We have the following properties for the sets $\mathbb{X}_*$ where $*\in\{H,L,S,N\}$:
\begin{enumerate}
\item For $(k_1,k_2,k_3)\in \mathbb{X}_{H}$ we have $|k_2|\geq|k_3|\geq 2^{-20}|k|$.

\item For $(k_1,k_2,k_3)\in \mathbb{X}_{L}$ we have $|k|/2\leq|k_1|\leq 2|k|$ and $\min(|k|,|k_1|)\geq 2^{18}\max(|k_2|,|k_3|)$.

\item For $(k_1,k_2,k_3)\in \mathbb{X}_{S}$ we have $|k|/2\leq |k_2|\leq 2|k|$, $|k|\geq 2^{20}|k_3|$ and $|k_3|\geq 2^{-10}|k_1|$.

\item For $(k_1,k_2,k_3)\in\mathbb{X}_N$ we have $|k_2|\geq 2^{-22}\max(|k|,|k_1|)$ and $\min(|k|,|k_1|)\geq 2^{10} |k_3|$.

\item For $(k_1,k_2,k_3)\in \mathbb{X}_{H}\cup\mathbb{H}_{S}$ we have  \begin{equation}\label{semilinear}|k_1|\cdot(\langle k_1\rangle\langle k_2\rangle\langle k_3\rangle)^{-\frac{1}{2}}\lesssim\langle k\rangle^{-\frac{1}{2}}.\end{equation}

\item For $(k_1,k_2,k_3)\in \mathbb{X}_{L}\cup\mathbb{X}_{N}$ we have
\begin{equation}\label{nonres}|\Delta|\sim \langle k\rangle\langle k_1\rangle,\quad\textrm{where}\quad\Delta=k^2+k_1^2-k_2^2-k_3^2\,=\, 2(k-k_2)(k -k_3).\end{equation}
\end{enumerate}
\end{prop}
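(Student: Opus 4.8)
The plan is to reduce every assertion to the single relation $k_1=k_2+k_3-k$, which holds on $\mathbb V_3$ away from the diagonal point $(k,k,k)$, together with the ordering $|k_2|\ge|k_3|$; everything then follows from the triangle inequality and one algebraic identity. I first dispose of the diagonal: for $(k,k,k)$ one has $|k_3|=|k|\ge 2^{-20}|k|$, so it lies in $\mathbb X_H$, contributes nothing to (2)--(4), and satisfies (1), (5), (6) trivially.

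Items (1)--(4) are pure triangle-inequality bookkeeping. Item (1) is the definition of $\mathbb X_H$ together with $|k_2|\ge|k_3|$. For (2), on $\mathbb X_L$ one has $|k_3|\le|k_2|<2^{-20}|k|$, so $|k_1|=|k_2+k_3-k|$ lies in $[(1-2^{-19})|k|,(1+2^{-19})|k|]\subset[|k|/2,2|k|]$, and then $\min(|k|,|k_1|)\ge|k|/2\ge 2^{18}\max(|k_2|,|k_3|)$. For (3), on $\mathbb X_S$ one has $|k_1|\le 2^{10}|k_3|<2^{-10}|k|$ and $|k_3|<2^{-20}|k|$; writing $k_2=k+k_1-k_3$ gives $|k|/2\le|k_2|\le 2|k|$, while the other two inequalities are part of the definition of $\mathbb X_S$. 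For (4) one unwinds the complement: outside $\mathbb X_H$ means $|k_3|<2^{-20}|k|$; outside $\mathbb X_L$ means $|k_2|\ge 2^{-20}|k|$; and, since $|k_3|<2^{-20}|k|$ already forces the second defining inequality of $\mathbb X_S$, outside $\mathbb X_S$ means $|k_3|<2^{-10}|k_1|$. The first and third give $\min(|k|,|k_1|)\ge 2^{10}|k_3|$; and from $|k_1|=|k_2+k_3-k|\le|k_2|+2|k|\le(1+2^{21})|k_2|$ (using $|k|\le 2^{20}|k_2|$ and $|k_3|<|k|$) together with $|k_2|\ge 2^{-20}|k|$ one gets $|k_2|\ge 2^{-22}\max(|k|,|k_1|)$.

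For (5), since $|k_i|\le\langle k_i\rangle$ we bound $|k_1|(\langle k_1\rangle\langle k_2\rangle\langle k_3\rangle)^{-1/2}\le\langle k_1\rangle^{1/2}(\langle k_2\rangle\langle k_3\rangle)^{-1/2}$. On $\mathbb X_H$, $|k|\le 2^{20}|k_3|\le 2^{20}|k_2|$ yields $|k_1|\lesssim|k_2|$, so the right side is $\lesssim\langle k_3\rangle^{-1/2}\lesssim\langle k\rangle^{-1/2}$ because $\langle k_3\rangle\gtrsim\langle k\rangle$; on $\mathbb X_S$, $|k_1|\le 2^{10}|k_3|$ gives $\langle k_1\rangle\lesssim\langle k_3\rangle$, so the right side is $\lesssim\langle k_2\rangle^{-1/2}\sim\langle k\rangle^{-1/2}$ since $|k_2|\sim|k|$ by (3). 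For (6), substituting $k_1=k_2+k_3-k$ into $k^2+k_1^2-k_2^2-k_3^2$ produces the identity $\Delta=2(k-k_2)(k-k_3)$, and also $k-k_2=k_3-k_1$. On $\mathbb X_L$ both factors are comparable to $|k|$ (as $\max(|k_2|,|k_3|)<2^{-20}|k|$) and $|k_1|\sim|k|$ by (2), so $|\Delta|\sim|k|^2\sim\langle k\rangle\langle k_1\rangle$; on $\mathbb X_N$, $|k-k_3|\sim|k|$ (as $|k_3|<2^{-20}|k|$) and $|k-k_2|=|k_3-k_1|\sim|k_1|$ (as $|k_3|<2^{-10}|k_1|$), so $|\Delta|\sim|k||k_1|$.

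The only place requiring a little care is the passage from $|\Delta|\sim|k||k_1|$ to $|\Delta|\sim\langle k\rangle\langle k_1\rangle$ in (6): one checks that neither $|k|$ nor $|k_1|$ vanishes on $\mathbb X_L\cup\mathbb X_N$ — e.g.\ $|k_3|<2^{-20}|k|$ with $|k_3|\in\mathbb Z_{\ge 0}$ forces $|k|\ge 1$, likewise $|k_3|<2^{-10}|k_1|$ forces $|k_1|\ge 1$ on $\mathbb X_N$, while $|k_1|\ge|k|/2$ forces $|k_1|\ge 1$ on $\mathbb X_L$ — so that $\langle k\rangle\sim|k|$ and $\langle k_1\rangle\sim|k_1|$ there; otherwise the comparability $|\Delta|\sim\langle k\rangle\langle k_1\rangle$ (rather than just $\lesssim$) would fail. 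Everything else is immediate, which is why the proof is omitted in the paper.
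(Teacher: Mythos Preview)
Your proof is correct and is precisely the kind of elementary triangle-inequality/algebraic verification the paper has in mind when it says ``elementary and so we omit the proof''; there is no alternative approach to compare, and your care with the nonvanishing of $|k|$ and $|k_1|$ in (6) is appropriate.
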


\smallskip

We also need the following result, which will be used in analyzing the quintic terms in Section \ref{recoverw}. Once again these properties are elementary. We omit the proof.

 \begin{prop}\label{quinprep} 
Suppose\[k_2+k'-k_1=k,\,\,(k_1,k_2,k')\in\mathbb{X}_{\ast}; \quad k_4+k_5-k_3=k',\,\,(k_3,k_4,k_5)\in\mathbb{X}_{\#},\] where $*,\, \# \in\{H,L,S,N\}$ and where here
\begin{equation*}
\Delta\,=\, k^2 + k_1^2 - k'^2 - k_2^2 \qquad \text{ and } \qquad \Delta'\, =\,  k'^2 + k_3^2 - k_5^2 - k_4^2.
\end{equation*}

Moreover let us define, \[\alpha:=\frac{|k_1||k_3|}{\langle\Delta\rangle},\quad \beta:=\frac{|k_1||k_3|}{\langle\Delta\rangle\langle\Delta'\rangle},\quad \gamma:=\frac{|k_1||k_3|}{\langle\Delta'\rangle}.\] Then we have the followings:
\begin{enumerate}
\item Assume $*\in\{H,S\}$ and  $\#\in\{L,N\}$.  Then  either $(1a)$\,  $|\gamma|\lesssim 1$  or  $(1b)$\, $*=H$ and $|k_1|\geq 2^{40}|k'|$.  In case $(1b)$, 
\smallskip
\begin{itemize}
\item[(i)] if $\#=L$,  or if $\#=N$ and $|k_3|\leq 2^{30}|k'|$, then we have that, 
\[|k_1|/2\leq |k_2|\leq 2|k_1|,\quad |k_1|\geq 2^{5}\max_{3\leq j\leq 5}|k_j|,\quad |\gamma|\lesssim\frac{\langle k_1\rangle}{\max(\langle k_3\rangle,\langle k_4\rangle,\langle k_5\rangle,\langle k\rangle)},\] that $k_1\neq k_2$, and that $\max(|k_3|,|k_4|,|k_5|)=|k_j|$ for $j=3$ if $\#=L$ and,  $j\in\{3,4\}$ if $\#=N$;
\smallskip
\item[(ii)]  if $\#=N$ and $|k_3|\geq 2^{30}|k'|$, then we have that
\begin{equation*}
\begin{aligned}|k_1|/2&\leq |k_2|\leq 2|k_1|,&|k_3|/2&\leq |k_4|\leq 2|k_3|,\\|k_1|&\geq 2^5\max(|k|,|k_5|),&|\alpha|&\lesssim\frac{\langle k_1\rangle}{\max(\langle k\rangle,\langle k_5\rangle,\langle\pm k_1\pm k_2\rangle)}.
\end{aligned}
\end{equation*}
\end{itemize}
  
\item Assume $*,\#\in\{L,N\}$, then either $(2a)$\, $|\alpha|\lesssim 1$ or $(2b)$\, $\#=N$ and $|k_3|\geq 2^{40}|k|$. In case $(2b)$, 
\smallskip
\begin{itemize}
 \item[(i)] if $*=L$ or if $*=N$ and $|k_1|\leq 2^{30}|k|$, then we have
\[|k_3|/2\leq |k_4|\leq 2|k_3|,\quad |k_3|\geq 2^{5}\max_{j\in\{1,2,5\}}|k_j|,\quad |\gamma|\lesssim\frac{\langle k_3\rangle}{\max(\langle k_1\rangle,\langle k_2\rangle,\langle k_5\rangle,\langle k\rangle)},\] that $k_3\neq k_4$, and that $\max(|k_1|,|k_2|,|k_5|)= |k_j|$ for $j=1$ if $*=L$ and,  $j\in\{1,2\}$ if $*=N$; 
\smallskip
\item[(ii)] if $*=N$ and $|k_1|\geq 2^{30}|k|$, then we have that
\begin{equation*}
\begin{aligned}|k_1|/2&\leq |k_2|\leq 2|k_1|,&|k_3|/2&\leq |k_4|\leq 2|k_3|,\\|k_3|&\geq 2^5\max(|k|,|k_5|),&|\alpha|&\lesssim\frac{\langle k_3\rangle}{\max(\langle k\rangle,\langle k_5\rangle)},&|k|&\neq |k_5|.
\end{aligned}
\end{equation*}
\end{itemize}

\item Assume $*,\#\in\{L,N\}$, then we have that
\[|\beta|\lesssim\frac{1}{\langle k\rangle\langle \pm k_3\pm k_4\rangle},\quad \langle k\rangle\gtrsim\langle k_5\rangle.\]
\end{enumerate}
\end{prop}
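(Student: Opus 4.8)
\textbf{Proof proposal for Proposition \ref{quinprep}.}

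The plan is to treat each of the three assertions as a purely arithmetic consequence of the case-splitting properties in Proposition \ref{splitting}, applied once to the ``inner'' frequency interaction $(k_3,k_4,k_5)\in\mathbb{X}_\#$ producing $k'$, and once to the ``outer'' interaction $(k_1,k_2,k')\in\mathbb{X}_*$ producing $k$. The two key algebraic identities I would keep in front of me throughout are $\Delta=2(k-k_2)(k-k')$ and $\Delta'=2(k'-k_4)(k'-k_3)$; combined with Proposition \ref{splitting}(6), on $\mathbb{X}_L\cup\mathbb{X}_N$ these give $|\Delta|\sim\langle k\rangle\langle k_1\rangle$ (outer) and $|\Delta'|\sim\langle k'\rangle\langle k_3\rangle$ (inner), whereas on $\mathbb{X}_H\cup\mathbb{X}_S$ the semilinear bound \eqref{semilinear} is what one uses instead.

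For part (3), since $*,\#\in\{L,N\}$ both factorizations of the modulation are available, so $|\beta|=|k_1||k_3|/(\langle\Delta\rangle\langle\Delta'\rangle)\sim |k_1||k_3|/(\langle k\rangle\langle k_1\rangle\langle k'\rangle\langle k_3\rangle)=1/(\langle k\rangle\langle k'\rangle)$; I would then bound $\langle k'\rangle$ below by $\langle \pm k_3\pm k_4\rangle$ using Proposition \ref{splitting}(2),(4) (which force $\min(|k'|,|k_3|)\geq 2^{10}|k_5|$ on $\mathbb{X}_N$, hence $|k'|\gtrsim |k_3|\pm|k_4|$ type bounds, and an even stronger separation on $\mathbb{X}_L$), and likewise conclude $\langle k\rangle\gtrsim\langle k_5\rangle$ by chaining the two size relations $\langle k\rangle\gtrsim\langle k'\rangle\gtrsim\langle k_5\rangle$. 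Parts (1) and (2) are where the real bookkeeping lies: one first observes that $|\gamma|$ (resp.\ $|\alpha|$) can only be large when the \emph{denominator} modulation is small, i.e.\ when the corresponding interaction is of type $H$ or $S$ rather than $L$ or $N$ (since on $L,N$ one has the lower bound $|\Delta|\sim\langle k\rangle\langle k_1\rangle\gtrsim |k_1||k_3|$ once $|k_3|\lesssim\langle k\rangle$, which is the generic situation); this dichotomy produces the stated alternatives $(1a)/(1b)$ and $(2a)/(2b)$. In the exceptional branch one is forced into a ``high $\times$ low'' configuration $|k_1|\geq 2^{40}|k'|$ (resp.\ $|k_3|\geq 2^{40}|k|$), and then the remaining inequalities — $|k_1|/2\leq|k_2|\leq 2|k_1|$, the domination $|k_1|\geq 2^5\max_{j\geq 3}|k_j|$, $k_1\neq k_2$, the precise identification of which of $k_3,k_4,k_5$ is largest, and the refined bounds on $|\gamma|$ or $|\alpha|$ in terms of the ratio of the large frequency to the largest of the rest — all come from substituting $k'=k_2-k_1+k$ (resp.\ $k'=k_4+k_5-k_3$) back into the defining inequalities of $\mathbb{X}_*$ and $\mathbb{X}_\#$ from Proposition \ref{splitting} and tracking the triangle inequality; the sub-subcase split on whether $|k_3|\lessgtr 2^{30}|k'|$ (resp.\ $|k_1|\lessgtr 2^{30}|k|$) is exactly the distinction of whether the inner (resp.\ outer) frequency $k_3$ (resp.\ $k_1$) is itself comparable to $k'$ (resp.\ $k$) or strictly dominates it, which changes which modulation controls $\alpha$ versus $\gamma$.

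The main obstacle I anticipate is not any single estimate but the sheer combinatorial explosion: there are $4\times 4$ choices of $(*,\#)$, each splitting further by whether a denominator is small, and each of those splitting again by the $2^{30}$ comparisons, so the proof is a long but mechanical verification. The one genuinely delicate point is getting the constants to propagate correctly — e.g.\ showing that $|k_1|\geq 2^{40}|k'|$ together with the $\mathbb{X}_*$ constraints upgrades to $|k_1|\geq 2^5\max_{3\leq j\leq 5}|k_j|$ and to the near-equality $|k_1|\asymp|k_2|$ — which requires that the $\mathbb{X}_*$ thresholds ($2^{-20}$, $2^{-10}$, $2^{18}$, $2^{22}$, $2^{10}$ from Proposition \ref{splitting}) were chosen with enough slack; this is presumably why the authors declare these facts ``elementary'' and omit the proof. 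I would organize the write-up by handling (3) first (shortest), then (2), then (1), and within (1)–(2) always dispatch the ``good'' alternative $(\ast a)$ immediately before descending into the exceptional tree.
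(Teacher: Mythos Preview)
Your proposal is correct and follows the natural route the paper implicitly intends: the authors explicitly omit the proof as elementary, and what you describe --- applying Proposition~\ref{splitting} separately to the inner triple $(k_3,k_4,k_5)\in\mathbb{X}_\#$ and the outer triple $(k_1,k_2,k')\in\mathbb{X}_*$, combining the resulting size relations with the modulation identities and tracking constants through the triangle inequality --- is exactly that elementary verification. One small slip: the inner factorization should read $\Delta'=2(k'-k_4)(k'-k_5)$, not $2(k'-k_4)(k'-k_3)$; with this corrected, your key computation $|\beta|\sim 1/(\langle k\rangle\langle k'\rangle)$ and the chain $\langle k\rangle\gtrsim\langle k'\rangle\gtrsim\langle k_5\rangle$ go through, and the bound $\langle k'\rangle\gtrsim\langle\pm k_3\pm k_4\rangle$ follows for the specific sign combination arising in the quintic expression (namely $k_4-k_3=k'-k_5$, with $|k_5|\ll|k'|$).
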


\medskip

For $*\in\{H,L,S,N\}$ define $\mathcal{C}_*$ by
\begin{equation}\label{cubpart}\mathscr{F}_x\mathcal{C}_*(v_1,v_2,v_3)(k)=\sum_{\mathbb{X}_*}k_1M_3(k,k_1,k_2,k_3)\cdot\overline{\widehat{v_1}(k_1)}\widehat{v_2}(k_2)\widehat{v_3}(k_3),
\end{equation}then we have
\begin{equation}\label{decomposition}
\mathcal{C}=\mathcal{C}_H+\mathcal{C}_{L}+\mathcal{C}_{S}+\mathcal{C}_N.
\end{equation}
\subsection{The full setup}\label{setup} When all the relevant nonlinearities are well-defined as spacetime distributions, which we will see in the course of the proof, the integral equation (\ref{intg}) for $u$ will be equivalent to the integral equation
\begin{equation}\label{intg2}
v(t)=e^{it\partial_x^2}v_0+I(\mathcal{C}(v,v,v)+\mathcal{Q}(v,\cdots,v)),\quad IF(t)=\int_0^t e^{i(t-s)\partial_x^2}F(s)\,\mathrm{d}s,
\end{equation} for $v$, where the nonlinearities $\mathcal{C}$ and $\mathcal{Q}$ are as in (\ref{cub})$\sim$(\ref{sets}), and the initial data
\begin{equation}\label{lin3}v_0=\exp(-i\partial_x^{-1}\mathbb{P}_{\neq 0}|u_0|^2)\cdot u_0,
\end{equation} which satisfies $\|v_0\|_{H_{p_0}^{\frac{1}{2}}}\leq A_1$ given that $\|u_0\|_{H_{p_0}^{\frac{1}{2}}}\leq A$.

In the proof we will be extending the function $v$, which is defined on $J=[-T,T]$, to the whole line $\mathbb{R}_t$; to this end we fix a smooth function $\varphi(t)$ that is $1$ for $|t|\leq 1$ and $0$ for $|t|\geq 2$, and define the truncated versions of the linear solution and Duhamel operator \begin{equation}\label{duhamel}\psi(t)=\varphi(t)\cdot e^{it\partial_x^2}v_0,\quad \mathcal{I}F(t)=\varphi(t)\cdot I(\varphi(s)\cdot F(s)).
\end{equation} For later uses we will also define $\varphi_T(t)=\varphi(T^{-1}t)$.
\section{Preparations}\label{function}

In this section we define and set up our function spaces, prove the main linear estimates and prove an improved divisor bound which is used in some our estimates. 

\subsection{Function spaces}
We begin by properly defining the functions spaces that play a role in our proof. Denote{\footnote{Note that in \cite{GH} this same space is denoted by $\widehat{H}^s_{p'}(\mathbb T)$ where $\frac{1}{p}+ \frac{1}{p'} = 1$.}}  the Fourier-Lebesgue norms $H_p^s(\mathbb T)$, where $p\in[2,\infty)$ an $s\in\mathbb{R}$, by
\begin{equation}\label{defhp}\|u\|_{H_p^{s}}=\|\langle k\rangle^{s}\widehat{u}(k)\|_{\ell_k^p}.\end{equation} 
In one dimension,  $H_p^s$ has the same scaling as the Sobolev space $H^\gamma$ for \begin{equation}\label{scale}\gamma= s+\frac{1}{p}-\frac{1}{2} \end{equation} When $p>2$ we have $\gamma< s$, which allows the regularity index $\gamma$ to decrease while keeping $s \geq\frac{1}{2}$.

\noindent The associated Fourier restriction norm spaces $X_{p,q}^{s,b}$, where $p,q\in[2,\infty)$ an $s,b\in\mathbb{R}$, are then defined by
\begin{equation}\label{defxpq}\|u\|_{X_{p,q}^{s,b}}=\|\langle k\rangle^s\langle\xi+k^2\rangle^b\widehat{u}(k,\xi)\|_{\ell_k^pL_{\xi}^q}.\end{equation} 
\smallskip

For $2 \leq p_0 < \infty$ fixed and $0<\delta\ll 1$ small depending on $p_0$ also fixed,  let the parameters $(b_0,b_1,q_0,q_1,r_0,r_1,r_2)$ be defined as in \eqref{param}.  We define the four spaces in which the estimates are proved as follows:
\begin{equation}\label{defspaces}
\begin{aligned}
Y_0&\,=\,X_{p_0,r_0}^{\frac{1}{2},\frac{1}{2}},\qquad &Y_1&\, =\, X_{p_0,r_1}^{\frac{1}{2},\frac{1}{2}},\\
Z_0&\, =\, X_{p_0,q_0}^{\frac{1}{2},b_0},\qquad &Z_1&\, =\, X_{p_0,q_0}^{\frac{1}{2},b_1}.
\end{aligned}
\end{equation}
Note that by H\"{o}lder we have
\begin{equation}\label{embedding}X_{p,q}^{s,b}\subset X_{p',q'}^{s',b'},\quad\textrm{provided}\quad p\leq p',q\leq q';\,\,s+\frac{1}{p}<s'+\frac{1}{p'},\,\,b+\frac{1}{q}<b'+\frac{1}{q'},\end{equation} in particular $Z_0\subset Y_0\subset C_t^0H_{p_0}^{\frac{1}{2}}$.
Finally, for any finite interval $I$ and any spacetime norm $\mathcal{Y}$, define \begin{equation}\label{defz}\|u\|_{\mathcal{Y}(I)}=\inf\big\{\|v\|_{\mathcal{Y}}:v=u\mathrm{\ on\ }I\big\}.\end{equation}
\subsection{Linear estimates} We will be using the following notation for a spacetime function $F$:
\begin{equation}\label{tilted}\widetilde{F}(k,\lambda)=\mathscr{X}F(k,\lambda):=\widehat{F}(k,\lambda-k^2),
\end{equation} where $\widehat{F}$ is the spacetime Fourier transform.
\begin{lem}\label{lin} Define the function
\begin{equation}\label{linfor1}K(\lambda,\sigma)=i\bigg[\int_{\mathbb{R}}\frac{\widehat{\varphi}(\lambda-\mu)\widehat{\varphi}(\mu-\sigma)}{\mu}\,\mathrm{d}\mu-\widehat{\varphi}(\lambda)\int_{\mathbb{R}}\frac{\widehat{\varphi}(\mu-\sigma)}{\mu}\,\mathrm{d}\mu\bigg],
\end{equation}
where integrations are defined as principal value limits, then it satisfies \begin{equation}\label{esti}|K(\lambda,\sigma)|\lesssim_B\bigg(\frac{1}{\langle\lambda\rangle^B}+\frac{1}{\langle\lambda-\sigma\rangle^B}\bigg)\frac{1}{\langle \sigma\rangle}
\end{equation} for any $B>0$, and we have\begin{equation}\label{linfor2}
\widetilde{\mathcal{I}F}(k,\lambda)=\int_{\mathbb{R}}K(\lambda,\sigma)\widetilde{F}(k,\sigma)\,\mathrm{d}\sigma.
\end{equation} 
\end{lem}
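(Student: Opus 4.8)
\textbf{Proof proposal for Lemma \ref{lin}.}

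The plan is to compute $\widetilde{\mathcal I F}$ directly from the definitions and then read off $K$ as the resulting kernel. Recall $\mathcal I F(t) = \varphi(t) \cdot I(\varphi(s) F(s))(t)$, where $IF(t) = \int_0^t e^{i(t-s)\partial_x^2} F(s)\,\mathrm{d}s$. First I would write $\int_0^t = \int_{-\infty}^t - \int_{-\infty}^0$; the second piece is a pure linear solution with initial data $\int_{-\infty}^0 e^{-is\partial_x^2}(\varphi F)(s)\,\mathrm{d}s$, which accounts for the second term in \eqref{linfor1} (the one with a bare $\widehat\varphi(\lambda)$). Using the Fourier representation $\mathbf 1_{(-\infty,t)}(s)$ whose Fourier transform in $t-s$ is (up to constants) $\tfrac{1}{i\mu}$ as a principal value plus a delta, and conjugating the free propagator by the transform $\mathscr X$ in \eqref{tilted} so that $e^{it\partial_x^2}$ becomes the identity on the tilted side, one finds that $\widetilde{I(\varphi F)}(k,\lambda)$ is (a principal-value regularization of) $\int_{\mathbb R}\frac{\widehat\varphi(\lambda-\mu)}{\mu}\,\widetilde F(k,\cdot)\,\ast$-type expression. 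Multiplying by $\varphi(t)$ in physical time convolves with $\widehat\varphi$ in $\lambda$, producing the extra $\widehat\varphi(\mu-\sigma)$ factors and yielding exactly \eqref{linfor2} with $K$ as in \eqref{linfor1}. This part is bookkeeping: the only subtlety is keeping the principal values and the delta contributions straight, which is precisely why the two terms in the bracket of \eqref{linfor1} appear together — the delta part of $\tfrac{1}{i\mu}$ cancels against a piece of the $\int_{-\infty}^0$ subtraction.

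Next, the kernel bound \eqref{esti}. Write $K(\lambda,\sigma) = i\big[F_1(\lambda,\sigma) - \widehat\varphi(\lambda) F_2(\sigma)\big]$ where $F_1(\lambda,\sigma) = \mathrm{p.v.}\int \frac{\widehat\varphi(\lambda-\mu)\widehat\varphi(\mu-\sigma)}{\mu}\,\mathrm{d}\mu$ and $F_2(\sigma) = \mathrm{p.v.}\int \frac{\widehat\varphi(\mu-\sigma)}{\mu}\,\mathrm{d}\mu$. Since $\varphi$ is smooth and compactly supported, $\widehat\varphi$ is Schwartz, so $F_2$ is bounded and in fact $|F_2(\sigma)|\lesssim \langle\sigma\rangle^{-1}$ by splitting the integral into $|\mu|\le |\sigma|/2$ (where $|\mu-\sigma|\gtrsim|\sigma|$ forces rapid decay of $\widehat\varphi(\mu-\sigma)$, and the p.v. of $1/\mu$ over a symmetric-ish set is harmless) and $|\mu|\ge|\sigma|/2$ (where $1/|\mu|\lesssim 1/|\sigma|$ and $\widehat\varphi(\mu-\sigma)$ is integrable). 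For $F_1$ I would do the analogous dyadic decomposition in $\mu$: away from $\mu=0$ the $1/\mu$ is a harmless smooth factor and one just uses that the convolution of two Schwartz functions is Schwartz, giving rapid decay in whichever of $\lambda$, $\lambda-\sigma$ is large; near $\mu = 0$ one extracts $\widehat\varphi(\lambda)\widehat\varphi(-\sigma)$ and the p.v. correction, and the leftover $\widehat\varphi(\lambda)F_2(\sigma)$-type term is exactly what is subtracted off in $K$, so the difference $F_1 - \widehat\varphi(\lambda)F_2$ has the stated $\langle\sigma\rangle^{-1}$ decay uniformly, with the remaining rapid decay coming from $\min(\langle\lambda\rangle,\langle\lambda-\sigma\rangle)$ being large whenever $\mu$ is forced away from $0$.

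I expect the main obstacle to be the careful handling of the principal-value singularity at $\mu = 0$ and its interaction with the compact support / Schwartz decay — specifically, verifying that the subtraction of $\widehat\varphi(\lambda)F_2(\sigma)$ in \eqref{linfor1} is exactly the counterterm that both (a) makes the Duhamel identity \eqref{linfor2} hold with $\int_0^t$ rather than $\int_{-\infty}^t$, and (b) renders $K$ a genuine bounded kernel with the decay \eqref{esti} rather than merely a tempered distribution. Once the algebra of the p.v. is pinned down, the decay estimate is a routine stationary-phase-free estimate using only Schwartz bounds on $\widehat\varphi$ and the elementary inequality $\langle\lambda\rangle + \langle\lambda-\sigma\rangle \gtrsim \langle\sigma\rangle$. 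The freedom to take $B$ arbitrarily large in \eqref{esti} reflects that all decay losses come from $\widehat\varphi$, which is Schwartz, so no sharpness issues arise there.
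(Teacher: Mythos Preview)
Your approach is correct and essentially the same as the paper's: the formula \eqref{linfor2} is derived exactly as you outline (the paper simply cites \cite{Deng}, Lemma 3.3, for this computation and then identifies the constants $c_0=i$, $c_1=-i$), and the bound \eqref{esti} is obtained from the Schwartz decay of $\widehat\varphi$ as you indicate. One small clarification: the subtraction in \eqref{linfor1} is needed only for your point (a), not for (b) --- each of $F_1(\lambda,\sigma)$ and $\widehat\varphi(\lambda)F_2(\sigma)$ separately already satisfies \eqref{esti} (the p.v.\ integrals are against smooth integrands, so no cancellation between the two terms is required for the decay), which slightly simplifies the second half of your argument.
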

\begin{proof} In \cite{Deng}, Lemma 3.3, it is derived that
\[\widetilde{\mathcal{I}F}(k,\lambda)=c_0\int_{\mathbb{R}}\frac{\widehat{\varphi}(\lambda-\mu)}{\mu}\,\mathrm{d}\mu\int_{\mathbb{R}}\widehat{\varphi}(\mu-\sigma)\widetilde{F}(k,\sigma)\,\mathrm{d}\sigma+c_1\widehat{\varphi}(\lambda)\cdot\int_{\mathbb{R}}\frac{\mathrm{d}\mu}{\mu}\int_{\mathbb{R}}\widehat{\varphi}(\mu-\sigma)\widetilde{F}(k,\sigma)\,\mathrm{d}\sigma,\] where $c_0$ and $c_1$ are numerical constants, and integrations are defined as principal value limits. By our convention with Fourier transform, we can calculate that $c_0=i$ and $c_1=-i$, which gives the formula (\ref{linfor1}). The bound (\ref{esti}) follows easily, using that $\widehat{\varphi}$ is a Schwartz function.
\end{proof}
\begin{prop}\label{stcut} Suppose $u$ is a smooth function such that $u(0)=0$. Then we have the estimates
\begin{equation}\label{shorttime}\|\varphi_T\cdot u\|_{Y_0}\lesssim T^{\theta}\|u\|_{Y_1},\quad \|\varphi_T\cdot u\|_{Z_0}\lesssim T^{\theta}\|u\|_{Z_1}.
\end{equation}
\end{prop}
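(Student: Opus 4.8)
The statement is a standard "short-time gain" estimate for the truncated Duhamel operator: multiplying by a sharp-in-time cutoff $\varphi_T$ costs only a small power $T^\theta$ while dropping one level of the $\xi$-weight (from $b_1$ to $\frac12$ in the $Y$-scale, and from $b_1$ to $b_0$ in the $Z$-scale). The hypothesis $u(0)=0$ is essential: it is what allows $u$ to be controlled on $[-T,T]$ in terms of its $X^{s,b_1}$-norm with a genuine power gain, rather than only $T^{0+}$. I would prove the two inequalities in parallel since their structure is identical, only the Lebesgue exponent in $\xi$ ($r_i$ versus $q_0$) and the target weight ($\tfrac12$ versus $b_0$) change. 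Note first that the $\ell^p_k$ and the $\langle k\rangle^{1/2}$ weight are inert throughout — the cutoff $\varphi_T$ acts only in $t$, hence only on the $\lambda$ (equivalently $\xi$) variable for each fixed $k$ — so the entire matter reduces to a one-variable estimate: for a function $f(\lambda)=\widetilde u(k,\lambda)$ with $\int f\,d\lambda\cdot(\text{phase})$ vanishing appropriately (the transcription of $u(0)=0$), show
\[
\big\|\langle\lambda\rangle^{b_0}\,\widehat{\varphi_T}*f\big\|_{L^{q}_\lambda}\lesssim T^{\theta}\,\big\|\langle\lambda\rangle^{b_1}f\big\|_{L^{q}_\lambda},
\]
and similarly with $(b_0,q)\rightsquigarrow(\tfrac12,r_i)$; here I have used the notational identification $\widetilde F(k,\lambda)=\widehat F(k,\lambda-k^2)$ from \eqref{tilted}, under which the $X^{s,b}_{p,q}$-norm becomes $\|\langle k\rangle^s\langle\lambda\rangle^b\widetilde F\|_{\ell^p_kL^q_\lambda}$ and the convolution against $\widehat{\varphi_T}$ is the only new ingredient.

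The core one-variable estimate is proved by splitting $\widehat{\varphi_T}(\lambda)=T\widehat\varphi(T\lambda)$ and exploiting that $\widehat\varphi$ is Schwartz together with the cancellation at $u(0)=0$. Concretely, write the $X^{s,b_1}$-function $u$ on $\mathbb R_t$ via its Fourier representation, use $u(0)=0$ to subtract the value at $t=0$ inside the time integral, and estimate $\varphi_T(t)\,[u(t)-u(0)]$. The mean-value bound $|\varphi_T(t)(u(t)-u(0))|\lesssim \mathbf 1_{|t|\le 2T}\,|t|\,\|u'\|_{\infty}$-type reasoning has to be done at the level of the $b$-scale rather than with a genuine derivative: the standard device is to interpolate between the trivial bound $\|\varphi_T u\|_{X^{s,b_0}}\lesssim \|u\|_{X^{s,b_1}}$ (with no gain, just Hölder in $\lambda$ since $b_0<b_1$ and $q$ is the same, plus $\|\widehat{\varphi_T}\|_{L^1}\lesssim 1$) and the bound $\|\varphi_T u\|_{X^{s,0}}\lesssim T\|u\|_{X^{s,1}}$ which uses $u(0)=0$ through the identity $u(t)=\int_0^t u'(s)\,ds$ on $\mathrm{supp}\,\varphi_T$. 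The second is where the factor of $T$ (not merely $T^0$) enters. Interpolating these two endpoints at the appropriate $\theta$-fraction produces $T^{\theta}$ with the weights $b_0=1-2\delta$ and $b_1=1-\delta$ lying strictly between the endpoints, which forces $\theta$ to be a small multiple of $\delta$ — consistent with the paper's convention that $\theta$ is "sufficiently small depending on $\delta$."

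The main obstacle, and the only place where care is genuinely required, is establishing the endpoint estimate $\|\varphi_T u\|_{X^{s,0}_{p,q}}\lesssim T\,\|u\|_{X^{s,1}_{p,q}}$ with the correct power of $T$ under the hypothesis $u(0)=0$, while respecting that we are in $L^q_\xi$ with $q\neq 2$ rather than in the usual $L^2_\xi$ Bourgain setting. In $L^2$ this is classical (e.g. Bourgain, Tao's book), but for $q>2$ one cannot simply use Plancherel; instead one argues directly on the physical side: on $\mathrm{supp}\,\varphi_T\subset\{|t|\le 2T\}$ write $u(t)=\int_0^t(\partial_t u)(\tau)\,d\tau$, bound $\|\varphi_T u\|_{L^\infty_t}\lesssim T\,\|\partial_t u\|_{L^\infty_t}$, and convert $L^\infty_t$-bounds into $X^{s,0}$ and $X^{s,1}$ bounds respectively via the embeddings $X^{s,0}_{p,q}(I)$ against $L^\infty_t H^{s}_p$-type spaces using that $|I|\lesssim T$; equivalently, one can simply transfer the known $L^2$-argument by an application of the Hölder inequality that is already encoded in \eqref{embedding}. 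Once this endpoint and the trivial endpoint are in hand, interpolation closes the proposition; one then re-installs the $\ell^p_k$ sum and $\langle k\rangle^{1/2}$ weight, which commute with everything done above, to recover \eqref{shorttime} exactly as stated.
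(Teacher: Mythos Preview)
Your reduction to a one-variable estimate in $\lambda$ is correct and matches the paper's first step. The interpolation scheme, however, does not close. The ``trivial endpoint'' $\|\varphi_T u\|_{X^{s,b_0}_{p,q}}\lesssim\|u\|_{X^{s,b_1}_{p,q}}$ with an $O(1)$ constant and no use of $u(0)=0$ is false: convolution with $\widehat{\varphi_T}$ is bounded on unweighted $L^q$ since $\|\widehat{\varphi_T}\|_{L^1}=O(1)$, but the weight $\langle\lambda\rangle^{b_0}$ does not commute with this convolution uniformly in $T$, because $\|\langle\cdot\rangle^{b_0}\widehat{\varphi_T}\|_{L^1}\sim T^{-b_0}$. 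Concretely, take $g=\varphi$ (so $g(0)\neq 0$): then $\varphi_T g=\varphi_T$ for small $T$, and $\|\langle\lambda\rangle^{b_0}\widehat{\varphi_T}\|_{L^{q_0}}\sim T^{1-b_0-1/q_0}=T^{-2\delta}\to\infty$, while the right side is $O(1)$. So multiplication by $\varphi_T$ in high-$b$ spaces \emph{loses} powers of $T$ unless one already invokes $u(0)=0$. Moreover, even granting both endpoints, interpolation between $(\text{output }b_0,\text{input }b_1)$ with constant $O(1)$ and $(\text{output }0,\text{input }1)$ with constant $T$ only hits the target $(b_0,b_1)$ at interpolation parameter $0$, yielding no gain.

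There is also a structural point you have missed: in the $Y$-estimate the weight $b=\tfrac12$ is the \emph{same} on both sides, and it is the Lebesgue exponent that changes (from $r_1$ to $r_0>r_1$). The paper's argument is built around this. It first trades the gap $b_1-b_0$ in the $Z$-case for a drop in Lebesgue exponent via the embedding \eqref{embedding}, reducing both cases to a single lemma with the same $b$ and different $L^{\widetilde q}\to L^q$ exponents:
\[
\big\|\langle\lambda\rangle^{b}(\widehat g*\widehat{\varphi_T})\big\|_{L^{q}}\lesssim T^{\frac{1}{\widetilde q}-\frac{1}{q}}\big\|\langle\sigma\rangle^{b}\widehat g\big\|_{L^{\widetilde q}},\qquad g(0)=0,\ \ b+\tfrac{1}{\widetilde q}>1>b.
\]
This is proved not by interpolation but by splitting $\widehat g$ at the threshold $|\sigma|=T^{-1}$: the high-frequency piece is handled by a Schur bound (using $\langle\lambda\rangle^b/\langle\sigma\rangle^b\lesssim\langle T(\lambda-\sigma)\rangle^b$ on $|\sigma|\geq T^{-1}$), and on the low-frequency piece the hypothesis $g(0)=0$ lets one replace $\widehat{\varphi_T}(\lambda-\sigma)$ by $\widehat{\varphi_T}(\lambda-\sigma)-\widehat{\varphi_T}(\lambda)$, gaining the crucial factor $\min(1,T|\sigma|)$.
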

\begin{proof} First notice that, by (\ref{embedding}),
\[\|u\|_{X_{p_0,q_1}^{\frac{1}{2},b_0}}\lesssim\|u\|_{X_{p_0,q_0}^{\frac{1}{2},b_1}}=\|u\|_{Z_1}.\] Then, by separating different Fourier modes and conjugating by $e^{\pm itk^2}$ at Fourier mode $e^{ikx}$, it suffices to prove that for any function $g=g(t)$ satisfying $g(0)=0$,
\begin{equation}\label{st2}\|\langle \lambda\rangle^{b}(\widehat{g}*\widehat{\varphi_T})(\lambda)\|_{L^q}\lesssim T^{\frac{1}{\widetilde{q}}-\frac{1}{q}} \, \|\langle \sigma\rangle^b\widehat{g}(\sigma)\|_{L^{\widetilde{q}}},
\end{equation} provided $\infty>q>\widetilde{q}>1$ and $b+ \frac{1}{\widetilde{q}} >1>b$. Let $g=g_1+g_2$ where
\[\widehat{g_1}(\sigma)=\mathbf{1}_{|\sigma|\geq T^{-1}}(\sigma)\widehat{g}(\sigma),\quad \widehat{g_2}(\sigma)=\mathbf{1}_{|\sigma|< T^{-1}}(\sigma)\widehat{g}(\sigma),\]we will actually prove that
\begin{equation}\label{st21}\|\langle\lambda\rangle^{b}(\widehat{g_1}*\widehat{\varphi_T})(\lambda)\|_{L^q}\lesssim T^{\frac{1}{\widetilde{q}}-\frac{1}{q}} \, \|\langle \sigma\rangle^b\widehat{g}(\sigma)\|_{L^{\widetilde{q}}},
\end{equation} and
\begin{equation}\label{st22}\|\langle\lambda\rangle^{b}(\widehat{g_2}*\widehat{\varphi_T})(\lambda)\|_{L^q}\lesssim T^{\frac{1}{\widetilde{q}}-\frac{1}{q}} \,\|\langle \sigma\rangle^b\widehat{g}(\sigma)\|_{L^{\widetilde{q}}}.
\end{equation} 
To prove (\ref{st21}), we can reduce it to the $L^{\widetilde{q}}\to L^q$ bound for the operator
\[\widehat{g}(\sigma)\mapsto\int_{\mathbb{R}}R(\lambda,\sigma)\widehat{g}(\sigma)\,\mathrm{d}\sigma,\quad R(\lambda,\sigma)=\mathbf{1}_{|\sigma|\geq T^{-1}}\cdot T\widehat{\varphi}(T(\lambda-\sigma))\frac{\langle\lambda\rangle^{b}}{\langle \sigma\rangle^b}.\] Since 
\[\mathbf{1}_{|\sigma|\geq T^{-1}}\cdot\frac{\langle\lambda\rangle^{b}}{\langle \sigma\rangle^b}\lesssim\frac{\langle T\lambda\rangle^b}{\langle T\sigma\rangle^b}\lesssim\langle T(\lambda-\sigma)\rangle^b,\] it follows from Schur's estimate that this $L^{\widetilde{q}}\to L^q$ bound is at most\[\|T\widehat{\varphi}(T\mu)\langle T\mu\rangle^b\|_{L_\mu^\beta}\lesssim T^{1- \frac{1}{\beta}}=
T^{\frac{1}{\widetilde{q}}-\frac{1}{q}},\quad \frac{1}{\beta}=1+\frac{1}{q}-\frac{1}{\widetilde{q}},\] which proves (\ref{st21}).

To prove (\ref{st22}), notice that
\[(\widehat{g_2}*\widehat{\varphi_T})(\lambda)=-T\widehat{\varphi}(T\lambda)\int_{|\sigma|\geq T^{-1}}\widehat{g}(\sigma)\,\mathrm{d}\sigma-\int_{|\sigma|<T^{-1}}T\widehat{g}(\sigma)\big[\widehat{\varphi}(T\lambda)-\widehat{\varphi}(T(\lambda-\sigma))\big]\,\mathrm{d}\sigma\] since $g(0)=0$; thus 
\[|(\widehat{g_2}*\widehat{\varphi_T})(\lambda)|\lesssim_B T\langle T\lambda\rangle^{-B}\int_{\mathbb{R}}\min(1,|T\sigma|)|\widehat{g}(\sigma)|\,\mathrm{d}\sigma\] for any $B>0$. Since by elementary calculation we can prove
\[\int_{\mathbb{R}}\min(1,|T\sigma|)|\widehat{g}(\sigma)|\,\mathrm{d}\sigma\lesssim \|\langle \sigma\rangle^b\widehat{g}(\sigma)\|_{L^{\widetilde{q}}}\cdot\|\min(1,|T\sigma|)\langle\sigma\rangle^{-b}\|_{L^\gamma}\lesssim T^{b- \frac{1}{\gamma}}=T^{b+ \frac{1}{\widetilde{q}}-1},\quad \frac{1}{\gamma}=1-\frac{1}{\widetilde{q}},\] and that
\[\|T\langle T\lambda\rangle^{-B}\langle\lambda\rangle^{b}\|_{L^q}\lesssim T^{1-b- \frac{1}{q}},\] we deduce (\ref{st22}).
\end{proof}
\begin{rem} The requirement $u(0)=0$ is necessary. Below (\ref{shorttime}) will be applied only for those $u$ of form
\[u(t)=\int_0^t(\textrm{expression}),\] namely $u=\mathcal{I}(\cdots)$ or $u=\mathcal{E}_*^*(\cdots)$, see Section \ref{defofe} for the definition of $\mathcal{E}$, so $u(0)=0$ will always be true.
\end{rem}
\subsection{A divisor bound} Finally,  in this subsection we prove and record an improved divisor bound that will be handy later on in some parts of the proof\footnote{We note that a weaker version of Lemma \ref{newdiv} here already appeared in \cite{GH}, Lemma 3.1.}.
\begin{lem}\label{newdiv} (1) Let $\mathcal{R}=\mathbb{Z}$ or $\mathbb{Z}[\omega]$, where $\omega=\exp(2\pi i/3)$, and fix $\varepsilon>0$. Let $k,q\in\mathcal{R}$ and $\rho>0$ be such that $|q|\geq |k|^{\varepsilon}>0$. Then the number of divisors $r\in\mathcal{R}$ of $k$ that satisfies $|r-q|\leq\rho$ is at most $O_\varepsilon(\rho^\varepsilon)$.

(2) Consider the system
\begin{equation}\label{2sys}
\left\{\begin{split}
\pm a\pm b\pm c&=\rm{const.}\\
\mp a^2\mp b^2\mp c^2&=\rm{const.}
\end{split}\right.
\end{equation} where the signs are arbitrary, but the signs of $\pm a$ and $\mp a^2$ etc. are always the opposite. Assume also that there is no \emph{pairing}, where a pairing means that (say) $a=b$ and the signs of $a$ and $b$ in (\ref{2sys}) are the opposite. Then the number of solutions that satisfy $|a|\sim N_1$, $|b|\sim N_2$ and $|c|\sim N_3$ is $\lesssim_\varepsilon N^\varepsilon$, where $N$ is the \emph{second} largest of the $N_j$'s.
\end{lem}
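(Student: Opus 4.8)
\textbf{Proof plan for Lemma \ref{newdiv}.}

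The plan is to treat the two parts separately, with part (2) reduced to part (1) by elimination. For part (1), I would work in the ring $\mathcal{R}$ (which is $\mathbb{Z}$ or the Eisenstein integers $\mathbb{Z}[\omega]$, both of which are PIDs with finitely many units and a multiplicative norm $\mathcal{N}$), and exploit the classical divisor bound: the number of divisors of $k$ is $O_\varepsilon(|k|^\varepsilon)$, equivalently $O_\varepsilon(\mathcal{N}(k)^{\varepsilon})$. The key observation is that if $r_1, r_2$ are two distinct divisors of $k$ with $|r_i - q| \le \rho$, then $|r_1 - r_2| \le 2\rho$, while on the other hand the product $r_1 r_2$ divides $k^2$ and (since $|r_i| \ge |q| - \rho$, which is comparable to $|q| \ge |k|^\varepsilon$ once $\rho$ is not too large — the regime $\rho \gtrsim |q|$ being trivial since then $\rho^\varepsilon \gtrsim |k|^{\varepsilon^2}$ already dominates) both $r_i$ are large. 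First I would dispose of the case $\rho \ge \tfrac12 |q|$ as trivial. In the remaining case, fixing one divisor $r_0$ among those in the ball $\{|r-q|\le\rho\}$, every other such divisor $r$ satisfies: $r \mid k$, and $r - r_0$ is a nonzero element of norm $O(\rho^2)$; but $r/\gcd(r,r_0)$ divides $k/\gcd(r,r_0)$ and the difference structure forces $r$ to lie in a controlled set. The cleanest route is: the map $r \mapsto k/r$ sends our divisors to divisors $s$ of $k$ with $|k/s - q| \le \rho$, i.e. $|s - k/q| \le \rho |s|/|q| \lesssim \rho \cdot \mathcal{N}(k)^{O(1)}$ — this is not obviously better. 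Instead I would argue directly: among divisors $r$ of $k$ in a ball of radius $\rho$ around $q$, consider pairs $(r, r')$; then $r - r'$ divides $\ldots$ — actually the sharp statement follows from noting $r r' \mid k^2$ is false in general, so the right approach is the ``gap'' argument: if $r \ne r'$ both divide $k$, then $\mathrm{lcm}(r,r') \mid k$, and $\mathrm{lcm}(r,r') = r r' / \gcd(r,r')$ has norm $\ge \mathcal{N}(r)\mathcal{N}(r')/\mathcal{N}(r-r') \ge |q|^2/(C\rho^2)$ using $\gcd(r,r') \mid (r-r')$ so $\mathcal{N}(\gcd) \le \mathcal{N}(r-r') \le C\rho^2$; hence $\mathcal{N}(k) \ge |q|^2/(C\rho^2) \ge |k|^{2\varepsilon}/(C\rho^2)$, giving $\rho^2 \gtrsim |k|^{2\varepsilon}/\mathcal{N}(k)$ — this bounds how small $\rho$ can be while there exist two divisors, but to bound the \emph{count} I instead note that all such divisors share the property that $\gcd(r, r_0) = r/u$ for the reference divisor $r_0$, with $\mathcal{N}(r/\gcd(r,r_0)) \le \mathcal{N}(r - r_0) \cdot(\text{stuff})$; after unwinding, each divisor $r$ is determined by a divisor of $k$ of norm $\le C\rho^2$ together with bounded data, and the classical divisor bound applied to \emph{that} small number gives $O((\rho^2)^\varepsilon) = O_\varepsilon(\rho^{2\varepsilon})$, which after renaming $\varepsilon$ yields $O_\varepsilon(\rho^\varepsilon)$.

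For part (2), the plan is elimination followed by an appeal to part (1). Subtracting/adding the two equations of \eqref{2sys} removes the quadratic terms: e.g. if the system is $a + b - c = m$, $-a^2 - b^2 + c^2 = n$ (signs matched as stated), then $c = a + b - m$ and substituting into the quadratic relation gives $-a^2 - b^2 + (a+b-m)^2 = n$, i.e. $2ab - 2ma - 2mb + (m^2 - n) = \text{const}$, which rearranges to a factorization of the shape $(a - m)(b - m) = (\text{const})$ — a fixed element $D$ of $\mathcal{R}$ — modulo checking the sign bookkeeping case by case (there are only a few sign patterns, all similar). The no-pairing hypothesis ensures $a \ne b$ in the relevant pattern so the factorization is nontrivial, i.e. $D \ne 0$ is guaranteed (or if $D = 0$ one of the factors vanishes, pinning $a$ or $b$ to a single value, and then $c$ too, giving $O(N^\varepsilon)$ trivially — indeed $O(1)$). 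With $D \ne 0$: $a - m$ is a divisor of $D$. Now $|a| \sim N_1$ and $|b| \sim N_2$ with, say, $N_1 \ge N_2$; then $|a - m| \ge |a| - |m| \gtrsim N_1 \ge N$ (after discarding the trivial range where $|m|$ dominates $N_1$), and $|D| = |a-m||b-m| \lesssim N_1 N_2$, so $a - m$ is a divisor of $D$ lying at distance $\lesssim |m|$ from a fixed point — but more usefully, $|a-m| \ge N_1$ while $D$ has norm $\lesssim N_1 N_2$, so the \emph{cofactor} $D/(a-m)$ has norm $\lesssim N_2 \le N^{O(1)}$; applying the standard divisor bound to $D$ with the constraint that the divisor $a - m$ has norm $\ge N_1^{\text{something}}$ relative to $|D|^{1/2} \asymp (N_1 N_2)^{1/2}$... the cleanest is: once $a$ is fixed, $b$ is determined ($b = m + D/(a-m)$) and then $c$ is determined, so the count equals the number of admissible values of $a$, which equals the number of divisors $r = a - m$ of $D$ with $|r| \sim N_1$; by the classical divisor bound this is $\lesssim_\varepsilon |D|^\varepsilon \lesssim (N_1 N_2)^\varepsilon \lesssim N_1^\varepsilon N^\varepsilon$. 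This is almost the claim but has an $N_1^\varepsilon$ rather than the desired $N^\varepsilon$ (with $N$ the \emph{second} largest $N_j$). To upgrade, I would instead fix the \emph{smaller} of $a, b$ — say $b$, with $|b| \sim N_2 \sim N$ — no: $b - m$ is a divisor of $D$ and there are up to $|D|^\varepsilon$ of those; that's the same. The real gain: parametrize by $r = b - m$ (the divisor of size $\sim N_2$); then $a - m = D/r$ is forced, and \emph{the number of divisors $r$ of $D$ of a fixed dyadic size $N_2$} is where part (1) enters — $r$ ranges over divisors of $D$ in the annulus $|r| \sim N_2$, but a ball/annulus of radius $\sim N_2$ centered at distance $\sim N_2$ from origin contains, by part (1) with $q$ of size $N_2$ and $\rho \sim N_2$... that gives $\rho^\varepsilon = N_2^\varepsilon = N^\varepsilon$ only if $|q| \ge |D|^\varepsilon$, which holds since $N_2 \gtrsim N \ge |D|^{1/2 - o(1)} \ge |D|^\varepsilon$. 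So part (1) with $k = D$, $q$ a fixed point of size $\sim N_2$, $\rho \sim N_2$ yields exactly $O_\varepsilon(N_2^\varepsilon) = O_\varepsilon(N^\varepsilon)$ choices for $r$, hence for $(a,b,c)$.

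The main obstacle I anticipate is getting the exponent in part (1) genuinely in terms of $\rho$ alone, uniformly in $k$ — i.e. making precise the ``divisors in a small ball are parametrized by divisors of a small number'' reduction, including the Eisenstein-integer case where ``interval'' becomes ``disk'' and one must use the norm $\mathcal{N}(r - q) \lesssim \rho^2$ and the fact that $\mathbb{Z}[\omega]$ is a PID with $\mathcal{N}$ multiplicative and only six units. The ring-theoretic gcd/lcm manipulation ($\mathcal{N}(\gcd(r,r')) \le \mathcal{N}(r - r')$, $\mathrm{lcm}(r,r') \mid k$) is the crux and works verbatim in both rings. The secondary subtlety is the exhaustive-but-routine sign-pattern analysis in \eqref{2sys}: one must check that in every matched-sign configuration the elimination produces a genuine factorization $(a \pm m)(b \pm m') = D$ (possibly after also eliminating using the linear equation to express $c$), and that the no-pairing hypothesis is exactly what prevents the degenerate $D = 0$ / repeated-factor situation; I expect this to be mechanical. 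Finally one should note the harmless reduction that the trivial ranges ($\rho \gtrsim |q|$ in part (1); $|m|$ or the constants dominating the dyadic scales in part (2)) are absorbed into the $O_\varepsilon$.
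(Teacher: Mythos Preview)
Your plan has two genuine gaps.

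\textbf{Part (2): the all-same-sign case.} You assert that ``there are only a few sign patterns, all similar'' and that elimination always yields a factorization of the shape $(a\pm m)(b\pm m')=D$ over $\mathbb{Z}$. This is false for the pattern $a+b+c=\ell$, $-a^2-b^2-c^2=\mathrm{const}$ (equivalently $a^2+b^2+c^2=\mathrm{const}$). Substituting $c=\ell-a-b$ gives a quadratic form $2a^2+2b^2+2ab-2\ell a-2\ell b+\ell^2=\mathrm{const}$, and $a^2+ab+b^2$ does \emph{not} factor over $\mathbb{Z}$. This is precisely why the statement of part (1) includes the Eisenstein ring $\mathbb{Z}[\omega]$: setting $u=3a-\ell$, $v=3b-\ell$ one has $u^2+uv+v^2=(u-\omega v)(u-\omega^2 v)$, and the divisor argument must be run in $\mathbb{Z}[\omega]$ with $q=(\omega+2)\ell$. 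You mention $\mathbb{Z}[\omega]$ in part (1) but never explain when it is actually needed in part (2); this case is the reason, and it is not ``mechanical'' in the sense of the other patterns.

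\textbf{Part (1): the counting step is not completed.} You correctly isolate the key facts ($\gcd(r,r')\mid r-r'$, so $\mathcal{N}(\gcd(r,r'))\lesssim\rho^2$; and $\mathrm{lcm}(r,r')\mid k$), and you dispose of the trivial range $\rho\gtrsim|q|$. But the step ``each divisor $r$ is determined by a divisor of $k$ of norm $\le C\rho^2$ together with bounded data'' is not justified: the map $r\mapsto\gcd(r,r_0)$ need not be injective, and knowing $\gcd(r,r_0)$ does not pin down $r$. The paper's argument avoids this by working with \emph{many} divisors at once: if $d_1,\dots,d_m$ are $m$ distinct divisors in the ball, then $k$ is divisible by $\mathrm{lcm}(d_1,\dots,d_m)$, which in turn is divisible by $\prod_j d_j\big/\prod_{i<j}\gcd(d_i,d_j)$; since each $|d_j|\gtrsim|k|^\varepsilon$ and each $|\gcd(d_i,d_j)|\le 2\rho\le 2|k|^{\delta}$ (with $\delta=\varepsilon^4$, say), one gets $|k|\gtrsim|k|^{\varepsilon m-\delta m^2}$, a contradiction for $m\sim\varepsilon^{-2}$. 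This pigeonhole-on-$m$-divisors step is the missing idea in your sketch; your pairwise estimate alone only tells you when \emph{two} divisors can coexist, not how many can.

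Your application of part (1) inside part (2) (centering at a point of size $\sim N_2$ with $\rho\sim N_2$) is fine once these two gaps are fixed, though note the paper instead centers at $q=\ell$ (of size $\sim N_1$) with $\rho\sim N'=$ second-largest, after reducing by symmetry to $N'\le N_1^{1/10}$; either choice works.
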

\begin{proof} (1) It is well-known that $\mathcal{R}$ has unique factorization and satisfies the standard divisor bound: the number of divisors of $k\neq 0$ is at most $O_\varepsilon(|k|^\varepsilon)$. Thus the result is trivial if $\rho\geq |k|^{\delta}$, where $\delta=\varepsilon^4$. Now suppose $|\rho|\leq|k|^{\delta}$ (and $|k|$ is large enough), we claim that the number of divisors $r$ is at most $m-1$, where $m\sim\varepsilon^{-2}$ is an integer.

In fact, suppose $d_j$, where $1\leq j\leq m$ are distinct divisors, then by unique factorization we know that $k$ is divisible by $\mathrm{lcm}(d_1,\cdots,d_m)$, and hence divisible by
\[\frac{\prod_{j=1}^md_j}{\prod_{1\leq i<j\leq m}\gcd(d_i,d_j)}:=k'.\] On the other hand, since $\gcd(d_i,d_j)$ divides $d_i-d_j$, we know $|\gcd(d_i,d_j)|\leq |d_i-d_j|\leq 2\rho$. As also $|d_j|\gtrsim|k|^{\varepsilon}$, we conclude that
\[|k|\gtrsim |k|^{\varepsilon m}|\rho|^{-m^2}\geq |k|^{\varepsilon m-\delta m^2},\] which is impossible for sufficiently large $|k|$, due to our choices of $m$ and $\delta$.

(2) By symmetry we may assume $N':=\max(N_2,N_3)\leq N_1^{1/10}$ (the other case will follow from the same argument below, using standard divisor bounds). There are three cases to consider: when $b+c-a=\rm{const.}$ (and $b^2+c^2-a^2=\rm{const.}$), when $a+b-c=\rm{const.}$, and when $a+b+c=\rm{const.}$.

(a) Suppose $b+c-a=\ell$ is constant and $b^2+c^2-a^2$ is also constant. Then $2(\ell-b)(\ell-c)=\ell^2-(b^2+c^2-a^2):=\Delta$ is also constant, and $\Delta\neq 0$ as their is no pairing. Now choosing $\mathcal{R}=\mathbb{Z}$, $k=\Delta/2$, $q=\ell$ and $\rho\sim N'$ in part (1) yields the result.

(b) Suppose $a+b-c=\ell$ and $a^2+b^2-c^2$ is constant. Then similarly $2(b-c)(\ell-b)=\Delta$ is a nonzero constant. Considering the divisor $\ell-b$ and choosing $\mathcal{R}=\mathbb{Z}$, $k=\Delta/2$, $q=\ell$ and $\rho\sim N'$ in part (1) yields the result.

(c) Suppose $a+b+c=\ell$ and $a^2+b^2+c^2$ is constant. Then letting $u=3a-\ell$ and $v=3b-\ell$, we have that
\[(u-\omega v)(u-\omega^2v)=u^2+uv+v^2=9(a^2+b^2+c^2)-3\ell^2:=\Delta\] is constant. Considering the divisor $u-\omega v$ and choosing $\mathcal{R}=\mathbb{Z}[\omega]$, $k=\Delta$, $q=(\omega+2)\ell$ and $\rho\sim N'$ in part (1) yields the result.
\end{proof}
\section{Structure of the solution}\label{structure} In this section we discuss the structure of the solution, identifying the para-controlling terms and the precise solution submanifold $\mathcal W$ where $v$ will belong.  From now on we will focus on the equation (\ref{intg2}). The submanifold $\mathcal{Z}$ in Theorem \ref{main} will be defined as $\mathcal{Z}=\mathcal{G}^{-1}\mathcal{W}$, where $\mathcal{G}$ is the gauge transform (\ref{gauge}), and $\mathcal{W}$ is a submanifold of $Y_{0}(J)\subset C_t^0H_{p_0}^{\frac{1}{2}}(J)$, in which the solution solution $v$ of (\ref{intg2}) will be constructed. To define $\mathcal{W}$ we need some further preparations.

\subsection{Splitting the Duhamel operator}\label{defofe} Let $\eta(t)$ be a Schwartz function that satisfies the cancellation condition
\begin{equation}\label{cancel}\widehat{\eta}(1)=0,\quad \mathcal{H}\widehat{\eta}(1)=1,
\end{equation} where $\mathcal{H}$ is the Hilbert transform (principal value convolution by $1/\xi$). For $*\in\{N,L\}$, consider the trilinear operator $E_*:=I\mathcal{C}_*$. Recall that $E_*$ satisfies that 
\begin{equation}\label{spacetimec}\mathscr{F}_xE_*(v_1,v_2,v_3)(k,t)=\sum_{\mathbb{X}_*}k_1 M_3(k,k_1,k_2,k_3)\int_0^t e^{-i(t-s)k^2}\overline{\widehat{v_1}(s,k_1)}\widehat{v_2}(s,k_2)\widehat{v_3}(s,k_3)\,\mathrm{d}s.
\end{equation}As before let $\Delta=k^2+k_1^2-k_2^2-k_3^2$ (we always have $|\Delta|\geq 1$), we will define the modified trilinear operators $E_*^Y$ and $E_*^X$ by
\begin{equation}\label{modifiedy}\mathscr{F}_xE_*^Y(v_1,v_2,v_3)(k,t)=\sum_{\mathbb{X}_*}k_1 M_3(k,k_1,k_2,k_3)\int_0^t e^{-i(t-s)k^2}\widehat{\eta}(\Delta(t-s))\overline{\widehat{v_1}(s,k_1)}\widehat{v_2}(s,k_2)\widehat{v_3}(s,k_3)\,\mathrm{d}s
\end{equation} ($Y$ indicated this term is to be estimated in the $Y$ space) and 
\begin{equation}\label{modifiedx}\mathscr{F}_xE_*^X(v_1,v_2,v_3)(k,t)=\sum_{\mathbb{X}_*}k_1 M_3(k,k_1,k_2,k_3)\int_0^t e^{-i(t-s)k^2}[1-\widehat{\eta}(\Delta(t-s))]\overline{\widehat{v_1}(s,k_1)}\widehat{v_2}(s,k_2)\widehat{v_3}(s,k_3)\,\mathrm{d}s
\end{equation}($X$ for ``extra''). Clearly $I\mathcal{C}_*=E_*=E_*^X+E_*^Y$. As with $I$, we will also define the time truncated versions
\begin{equation}\label{timetrunc}\mathcal{E}_*^Y(v_1,v_2,v_3)=\varphi(t)\cdot E_*^Y(\varphi(s)v_1,v_2,v_3),\quad \mathcal{E}_*^X(v_1,v_2,v_3)=\varphi(t)\cdot E_*^X(\varphi(s)v_1,v_2,v_3).
\end{equation}
\begin{prop}\label{newexp} For $*\in\{N,L\}$ we have the expressions:
\begin{equation}\label{formforxy}
\begin{aligned}\mathscr{X}\mathcal{E}_*^Y(v_1,v_2,v_3)(k,\lambda)&=\sum_{\mathbb{X}_*}k_1M_3(k,k_1,k_2,k_3)\int_{\mathbb{R}}K_{\Delta}^{Y}(\lambda,\sigma)\,\mathrm{d}\sigma\int_{\lambda_2+\lambda_3-\lambda_1=\sigma-\Delta}\overline{\widetilde{v_1}(k_1,\lambda_1)}\widetilde{v_2}(k_2,\lambda_2)\widetilde{v_3}(k_3,\lambda_3),\\
\mathscr{X}\mathcal{E}_*^X(v_1,v_2,v_3)(k,\lambda)&=\sum_{\mathbb{X}_*}k_1M_3(k,k_1,k_2,k_3)\int_{\mathbb{R}}K_{\Delta}^{X}(\lambda,\sigma)\,\mathrm{d}\sigma\int_{\lambda_2+\lambda_3-\lambda_1=\sigma-\Delta}\overline{\widetilde{v_1}(k_1,\lambda_1)}\widetilde{v_2}(k_2,\lambda_2)\widetilde{v_3}(k_3,\lambda_3),
\end{aligned}
\end{equation} where recall $\Delta=k^2+k_1^2-k_2^2-k_3^2$, and the functions $K_\Delta^Y$ and $K_\Delta^X$ satisfy the bounds
\begin{equation}\label{bdd1}|K_\Delta^Y(\lambda,\sigma)|\lesssim_B\frac{1}{\langle \lambda-\sigma\rangle^B}\min\bigg(\frac{1}{\langle\Delta\rangle},\frac{1}{\langle \sigma\rangle}\bigg)+\frac{1}{\langle\lambda-\sigma\rangle}\min\bigg(\frac{1}{\langle\Delta\rangle},\frac{1}{\langle \lambda\rangle}\bigg),
\end{equation} 
\begin{equation}\label{bdd2}|K_\Delta^X(\lambda,\sigma)|\lesssim_B\frac{1}{\langle \lambda\rangle^B\langle\sigma\rangle}+\frac{\langle\sigma-\Delta\rangle}{\langle\lambda-\sigma\rangle^B\langle\sigma\rangle}\min\bigg(\frac{1}{\langle\Delta\rangle},\frac{1}{\langle \sigma\rangle}\bigg)+\frac{\langle\lambda-\Delta\rangle}{\langle\lambda-\sigma\rangle}\min\bigg(\frac{1}{\langle\Delta\rangle},\frac{1}{\langle \lambda\rangle}\bigg)^2.
\end{equation}
\end{prop}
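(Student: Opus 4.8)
\textbf{Proof proposal for Proposition \ref{newexp}.}

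The plan is to reduce everything to the scalar formula \eqref{linfor2} from Lemma \ref{lin}, applied separately at each spatial frequency $k$. First I would write $\mathcal{E}_*^Y = \mathcal{I}(G^Y)$ and $\mathcal{E}_*^X = \mathcal{I}(G^X)$ where, at fixed $k$, the inputs $G^Y$ and $G^X$ are the cubic convolutions carrying the extra factors $\widehat{\eta}(\Delta(t-s))$ and $1-\widehat\eta(\Delta(t-s))$; but since these factors depend on $t$ and $s$ only through $t-s$, they are best absorbed directly. Concretely, conjugate by $e^{itk^2}$ at frequency $k$ so that the free evolution disappears, and observe that in the time-Fourier variable the factor $\widehat\eta(\Delta(t-s))$ becomes, after the change of variables, a multiplication by $\Delta^{-1}\widehat\eta(\lambda/\Delta)$ type kernel — more precisely, one replaces the scalar kernel $K(\lambda,\sigma)$ of $\mathcal{I}$ by a new kernel obtained by convolving $K$ against a rescaled copy of $\widehat\eta$ (respectively of $\delta - \widehat\eta$-rescaled) in the appropriate variable, then shifting by $\Delta$. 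Tracking the $\mathbb{X}_*$-sum and the three input factors $\overline{\widetilde{v_1}}, \widetilde{v_2}, \widetilde{v_3}$ through the spacetime Fourier transform $\mathscr{X}$ produces exactly the stated convolution identities \eqref{formforxy}, with $K_\Delta^Y$ and $K_\Delta^X$ being these modified kernels. The bookkeeping of the shift by $\Delta$ (coming from $\lambda_2+\lambda_3-\lambda_1 = \sigma - \Delta$, which encodes that the resonance function is shifted out by the free evolutions of the inputs) is the one place to be careful, but it is forced by the definition of $\mathscr{X}$ and $\widetilde{\cdot}$ and the identity $\xi + k^2 = \lambda$ together with $\xi_j + k_j^2 = \lambda_j$.

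Having obtained the kernels, the second step is to prove the pointwise bounds \eqref{bdd1} and \eqref{bdd2}. For $K_\Delta^Y$: by construction it is essentially $\int K(\lambda,\mu)\,\widehat\eta_\Delta(\mu - \sigma)\,d\mu$ where $\widehat\eta_\Delta$ is an $L^1$-normalized rescaling of $\widehat\eta$ at scale $\langle\Delta\rangle$ (up to the shift). Using the bound \eqref{esti} on $K$ with $B$ large, one gets two contributions: the Schwartz-tail piece $\langle\lambda-\sigma\rangle^{-B}$ multiplied by the better of $\langle\Delta\rangle^{-1}$ (from the $\widehat\eta_\Delta$ normalization, which saturates when the $\mu$-integral sees the full rescaled bump) and $\langle\sigma\rangle^{-1}$ (from the $\langle\mu\rangle^{-1}$ decay in $K$ when $\mu\sim\sigma$); and the slowly-decaying piece $\langle\lambda-\sigma\rangle^{-1}$ times the better of $\langle\Delta\rangle^{-1}$ and $\langle\lambda\rangle^{-1}$ (this last coming from the $\widehat\varphi(\lambda)$-term in the definition \eqref{linfor1} of $K$, whose $1/\mu$-integral against the rescaled $\widehat\eta$ is where the cancellation condition $\widehat\eta(1)=0$, $\mathcal{H}\widehat\eta(1)=1$ is used to kill what would otherwise be a logarithmic loss and to pin down the normalization). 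For $K_\Delta^X$ one does the same with $1 - \widehat\eta$ in place of $\widehat\eta$; now $1 - \widehat\eta$ does not vanish at the relevant point, so one loses a factor of $\langle\sigma-\Delta\rangle$ (resp. $\langle\lambda-\Delta\rangle$) measuring how far one is from the special frequency, but gains an extra $\langle\sigma\rangle^{-1}$ (resp. an extra power of the $\min$) because $X$ is the "off-resonant/integrated-by-parts" piece where the $1/\mu$ weight is never near its singularity — this is precisely the two-derivative gain the text alludes to. The division into the three displayed terms in each bound corresponds to: the $\widehat\varphi(\lambda)$ term of $K$, the diagonal $\mu\sim\sigma$ region, and the off-diagonal region.

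I expect the main obstacle to be the careful derivation and estimation of $K_\Delta^X$, specifically extracting the genuine gain in $\langle\Delta\rangle$ (or $\langle\lambda\rangle$) from the factor $1-\widehat\eta(\Delta(t-s))$: one must integrate by parts once in $s$ (using that $e^{-i(t-s)k^2}(1-\widehat\eta(\Delta(t-s)))$ differentiated in $s$ produces a factor $\sim k^2 \sim \Delta$ in the oscillatory phase, while $1-\widehat\eta$ near its zero contributes a compensating smallness), and then control the boundary terms at $s=0,t$ and the resulting kernel — this is what generates the $\langle\sigma-\Delta\rangle$ and $\langle\lambda-\Delta\rangle$ numerators and the squared $\min$. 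The $Y$-bound \eqref{bdd1} should be more routine since there the rescaled $\widehat\eta$ is directly integrable with the claimed normalization and no integration by parts is needed. Throughout, the two cancellation conditions \eqref{cancel} are exactly what is needed to make the $1/\mu$ principal-value integrals converge with the stated weights, so I would invoke them explicitly at those two points.
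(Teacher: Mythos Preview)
Your high-level plan --- reduce to a scalar kernel at each spatial frequency and track the $\Delta$-shift --- is correct, and the identity \eqref{formforxy} does follow this way. But the mechanism you propose for the pointwise bounds has the role of the cancellation conditions \eqref{cancel} exactly backwards, and this is a genuine gap.

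The kernel $K_\Delta^Y$ is not obtained by convolving $K$ with a rescaled $\widehat\eta$; rather, computing the Fourier kernel of $F\mapsto\varphi(t)\int_0^t\widehat\eta(\Delta(t-s))\varphi(s)F(s)\,ds$ directly (as in \cite{Deng}, Lemma~3.3, where the $\mathbf 1_{[0,t]}$-truncation brings in Hilbert transforms) yields the explicit formula
\[K_\Delta^Y(\lambda,\sigma)=i\int\widehat\varphi(\lambda-\mu)\widehat\varphi(\mu-\sigma)\,\tfrac{1}{\Delta}(\mathcal H\widehat\eta)\big(\tfrac{\mu}{\Delta}\big)\,d\mu-i\int\widehat\varphi(\lambda-\mu)\,\tfrac{1}{\Delta}\widehat\eta\big(\tfrac{\mu}{\Delta}\big)(\mathcal H\widehat\varphi)(\mu-\sigma)\,d\mu.\]
The bound \eqref{bdd1} then follows directly from this, using only that $\widehat\varphi,\widehat\eta$ are Schwartz and that their Hilbert transforms decay like $\langle\cdot\rangle^{-1}$ (so $|\frac{1}{\Delta}(\mathcal H\widehat\eta)(\mu/\Delta)|\lesssim\min(\langle\Delta\rangle^{-1},\langle\mu\rangle^{-1})$). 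No cancellation condition is used here, and there is no logarithmic loss to kill.

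The cancellation conditions are used exclusively for $K_\Delta^X$, and the mechanism is subtraction, not integration by parts. Writing $K_\Delta^X=K-K_\Delta^Y$ term by term, the first integrand contains
\[\frac{1}{\mu}-\frac{1}{\Delta}(\mathcal H\widehat\eta)\Big(\frac{\mu}{\Delta}\Big),\]
which vanishes at $\mu=\Delta$ \emph{precisely because} $\mathcal H\widehat\eta(1)=1$; this is what produces the factor $\langle\mu-\Delta\rangle\langle\mu\rangle^{-1}\min(\langle\Delta\rangle^{-1},\langle\mu\rangle^{-1})$ behind the second term of \eqref{bdd2}. Likewise, $\widehat\eta(1)=0$ forces $\frac{1}{\Delta}\widehat\eta(\mu/\Delta)$ to vanish at $\mu=\Delta$, which gives the squared $\min$ in the third term. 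Your proposed integration-by-parts route relies on the heuristic $k^2\sim\Delta$, but this is false: $\Delta=k^2+k_1^2-k_2^2-k_3^2$ and on $\mathbb X_L\cup\mathbb X_N$ one has $|\Delta|\sim\langle k\rangle\langle k_1\rangle$, not $k^2$. So that route would not produce the claimed gain.
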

\begin{proof} Fix $*\in\{N,L\}$. Let $K_{\Delta}^Y$ be the integral kernel of the linear operator
\[F(s)\mapsto\int_0^t\eta(\Delta(t-s))F(s)\,\mathrm{d}s.\] on the Fourier side, i.e.
\[\mathscr{F}_t\bigg(\int_0^t\eta(\Delta(t-s))F(s)\,\mathrm{d}s\bigg)(\lambda)=\int_{\mathbb{R}}K_{\Delta}^Y(\lambda,\sigma)\widehat{F}(\sigma)\,\mathrm{d}\sigma,\] and $K_\Delta^X=K-K_\Delta^Y$ where $K$ is defined in (\ref{linfor1}). By making Fourier expansion in $x$ twisting by $e^{\pm itk^2}$ on the time-Fourier side at mode $k$, one can see that (\ref{formforxy}) holds with exactly the same kernels $K_\Delta^Y$ and $K_\Delta^X$.

It then suffices to calculate these kernels; by an argument similar to \cite{Deng}, Lemma 3.3, we have
\begin{equation}\label{kdeltay}K_\Delta^Y(\lambda,\sigma)=i\int_{\mathbb{R}}\widehat{\varphi}(\lambda-\mu)\widehat{\varphi}(\mu-\sigma)\frac{1}{\Delta}(\mathcal{H}\widehat{\eta})\bigg(\frac{\mu}{\Delta}\bigg)\,\mathrm{d}\mu-i\int_{\mathbb{R}}\widehat{\varphi}(\lambda-\mu)\frac{1}{\Delta}\widehat{\eta}\bigg(\frac{\mu}{\Delta}\bigg)(\mathcal{H}\widehat{\varphi})(\mu-\sigma)\,\mathrm{d}\mu.
\end{equation} Since $\widehat{\eta}$ and $\widehat{\varphi}$ are Schwartz functions, their Hilbert transforms will decay like $\langle \lambda\rangle^{-1}$, thus
\[\bigg|\frac{1}{\Delta}(\mathcal{H}\widehat{\eta})\bigg(\frac{\mu}{\Delta}\bigg)\bigg|\lesssim\min\bigg(\frac{1}{\langle\Delta\rangle},\frac{1}{\langle \mu\rangle}\bigg),\quad |(\mathcal{H}\widehat{\varphi})(\mu-\sigma)|\lesssim\frac{1}{\langle \mu-\sigma\rangle}.\] Then, by elementary estimates of the integral, the first term on the right hand side of (\ref{kdeltay}) is bounded by the first term on the right hand side of (\ref{bdd1}), and the second term on the right hand side (\ref{kdeltay}) is bounded by the second term on the right hand side of (\ref{bdd1}).

As with $K_{\Delta}^X$, using (\ref{kdeltay}) and (\ref{linfor1}) we can calculate
\begin{multline}\label{kdeltax} K_{\Delta}^X(\lambda,\sigma)=i\int_{\mathbb{R}}\widehat{\varphi}(\lambda-\mu)\widehat{\varphi}(\mu-\sigma)\bigg[\frac{1}{\mu}-\frac{1}{\Delta}(\mathcal{H}\widehat{\eta})\bigg(\frac{\mu}{\Delta}\bigg)\bigg]\,\mathrm{d}\mu\\+i\int_{\mathbb{R}}\widehat{\varphi}(\lambda-\mu)\frac{1}{\Delta}\widehat{\eta}\bigg(\frac{\mu}{\Delta}\bigg)(\mathcal{H}\widehat{\varphi})(\mu-\sigma)\,\mathrm{d}\mu-i\widehat{\varphi}(\lambda)\int_{\mathbb{R}}\frac{\widehat{\varphi}(\mu-\sigma)}{\mu}\,\mathrm{d}\mu.
\end{multline} The third term on the right hand side of (\ref{kdeltax}) is bounded by the first term on the right hand side of (\ref{bdd2}). The first term on the right hand side of (\ref{kdeltax}) can be bounded by the second term on the right hand side of (\ref{bdd2}), once we can prove
\[\bigg|\frac{1}{\mu}-\frac{1}{\Delta}(\mathcal{H}\widehat{\eta})\bigg(\frac{\mu}{\Delta}\bigg)\bigg|\lesssim\frac{\langle\mu-\Delta\rangle}{\langle\mu\rangle}\min\bigg(\frac{1}{\langle\Delta\rangle},\frac{1}{\langle \mu\rangle}\bigg)\] for $|\mu|\geq 1$, but this follows from rescaling and the assumption $\mathcal{H}\widehat{\eta}(1)=1$. Similarly, the second term on the right hand side of (\ref{kdeltax}) can be bounded by the third term on the right hand side of (\ref{bdd2}), due to the estimate
\[\bigg|\frac{1}{\Delta}\widehat{\eta}\bigg(\frac{\mu}{\Delta}\bigg)\bigg|\lesssim\langle \mu-\Delta\rangle\min\bigg(\frac{1}{\langle\Delta\rangle},\frac{1}{\langle \mu\rangle}\bigg)^2\] and the fact that $\widehat{\eta}(1)=0$.
\end{proof}
\begin{rem}\label{further} Note that the first term on the right hand side of (\ref{bdd1}) is bounded by the second term, so we have 
\begin{equation}\label{kybd}|K_\Delta^{Y}|\lesssim \frac{1}{\langle\lambda-\sigma\rangle}\min\bigg(\frac{1}{\langle\Delta\rangle},\frac{1}{\langle \lambda\rangle}\bigg).
\end{equation}Moreover, by (\ref{bdd2}) we can write $K_\Delta^X=K_\Delta^{X,0}+K_\Delta^{X,+}$, where
\begin{equation}\label{k0bd}|K_\Delta^{X,0}|\lesssim\mathbf{1}_{\langle \sigma\rangle\gtrsim\langle\Delta\rangle}\frac{1}{\langle\lambda\rangle^B\langle\Delta\rangle}+\mathbf{1}_{\langle \lambda-\sigma\rangle\gtrsim\langle\sigma-\Delta\rangle}\cdot\min\bigg(\frac{1}{\langle \Delta\rangle},\frac{1}{\langle \lambda\rangle}\bigg)^2,\end{equation}
\begin{multline}\label{k+bd}|K_\Delta^{X,+}|\lesssim_B \mathbf{1}_{\langle\sigma\rangle\ll\langle\Delta\rangle}\frac{1}{\langle \lambda\rangle^B\langle\sigma\rangle}+\frac{\langle\sigma-\Delta\rangle}{\langle\lambda-\sigma\rangle^B\langle\sigma\rangle}\min\bigg(\frac{1}{\langle\Delta\rangle},\frac{1}{\langle \sigma\rangle}\bigg)\\+\mathbf{1}_{\langle \lambda-\sigma\rangle\ll\langle\sigma-\Delta\rangle}\frac{\langle\sigma-\Delta\rangle}{\langle\lambda-\sigma\rangle}\min\bigg(\frac{1}{\langle\Delta\rangle},\frac{1}{\langle \lambda\rangle}\bigg)^2.
\end{multline} We will define the terms $\mathcal{E}_*^{X,0}$ and $\mathcal{E}_*^{X,+}$ accordingly, for $*\in\{N,L\}$.
\end{rem}
\subsection{The submanifold $\mathcal{W}$} We can now define $\mathcal{W}$ as follows.
\begin{df}\label{defset} Recall that $A_2$, $A_3$ and $T$ are fixed. Let $J=[-T,T]$. We define
\begin{multline}\label{defw}\mathcal{W}=\big\{v\in Y_0(I):\|v\|_{Y_0(I)}\leq A_3,\textrm{ and }\textrm{ there exists }w\textrm{ with }\|w\|_{Z_0(I)}\leq A_2,\\\textrm{such that }v=w+E_{N}^{Y}(w,w,v)+E_{L}^Y(w,v,v)\big\}.
\end{multline} This is a submanifold of $Y_0(I)\subset C_t^0H_{p_0}^{\frac{1}{2}}(I)$. Moreover, we will define the submanifold $\mathcal{Z}$ of $C_t^0H_{p_0}^{\frac{1}{2}}(I)$ in the statement of Theorem \ref{main} by $\mathcal{Z}=\mathcal{G}^{-1}\mathcal{W}$.
\end{df} We will need the following proposition, whose proof is postponed to Section \ref{recoverv}.
\begin{prop}\label{cont} For any $w$ which satisfies $\|w\|_{Z_0(I)}\leq A_2$ there is a unique $v$ satisfying $\|v\|_{Y_0(I)}\leq A_3$, such that
\begin{equation}\label{ansatz}v=w+E_{N}^{Y}(w,w,v)+E_{L}^Y(w,v,v).
\end{equation} This mapping $w\mapsto v=v[w]$ is Lipschitz from the $A_2$-ball of $Z_0(I)$ to the $A_3$-ball of $Y_0(I)$. The submanifold $\mathcal{W}$ of $Y_0(I)$ is the image of this mapping.
\end{prop}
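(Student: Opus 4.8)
The plan is to set up \eqref{ansatz} as a fixed point problem for the map $\Phi_w(v) := w + E_N^Y(w,w,v) + E_L^Y(w,v,v)$ on the closed ball $B := \{v \in Y_0(I) : \|v\|_{Y_0(I)} \le A_3\}$, and to show $\Phi_w$ is a contraction there whenever $\|w\|_{Z_0(I)} \le A_2$. First I would record the relevant multilinear estimates for the operators $E_N^Y$ and $E_L^Y$ in the $Y_0$ topology. The decisive gain comes from the kernel bound \eqref{kybd}, which furnishes the extra factor $\min(\langle\Delta\rangle^{-1},\langle\lambda\rangle^{-1})$; in both the ``low-low'' case $\mathbb X_L$ and the ``non-resonant'' case $\mathbb X_N$, Proposition \ref{splitting}(6) gives $|\Delta| \sim \langle k\rangle \langle k_1\rangle \gtrsim |k_1|\cdot\langle k_1\rangle$, so that the derivative loss $k_1$ in \eqref{modifiedy} together with the weights $\langle k_1\rangle^{-1/2}$ on the conjugated input is more than compensated and one lands back in $Y_0 = X^{1/2,1/2}_{p_0,r_0}$. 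Concretely I expect estimates of the shape
\begin{equation*}
\|E_N^Y(w_1,w_2,v)\|_{Y_0(I)} \lesssim \|w_1\|_{Z_0(I)}\|w_2\|_{Z_0(I)}\|v\|_{Y_0(I)}, \qquad
\|E_L^Y(w,v_1,v_2)\|_{Y_0(I)} \lesssim \|w\|_{Z_0(I)}\|v_1\|_{Y_0(I)}\|v_2\|_{Y_0(I)},
\end{equation*}
possibly with a small power $T^\theta$ gained via Proposition \ref{stcut} after first proving them with $Z_1$, $Y_1$ on the right (so that the short-time factor can be extracted), using that $E_*^Y(\cdots)$ vanishes at $t=0$ as noted in the Remark after Proposition \ref{stcut}.

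Granting these, the contraction argument is routine: for $v \in B$,
\begin{equation*}
\|\Phi_w(v)\|_{Y_0(I)} \le \|w\|_{Y_0(I)} + C T^\theta\big(A_2^2 A_3 + A_2 A_3^2\big) \le A_3
\end{equation*}
provided $A_3$ is chosen large relative to $A_2$ (using $Z_0 \subset Y_0$ so $\|w\|_{Y_0(I)} \lesssim A_2$) and $T$ small, and similarly the difference $\Phi_w(v) - \Phi_w(v')$ is controlled by $C T^\theta(A_2^2 + A_2 A_3)\|v - v'\|_{Y_0(I)}$, which is $\le \tfrac12\|v-v'\|_{Y_0(I)}$ after shrinking $T$. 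Banach's fixed point theorem then yields the unique $v = v[w] \in B$ solving \eqref{ansatz}. For the Lipschitz dependence on $w$, I would write the equation for $v[w] - v[w']$, move all terms involving $v[w]$ to one side, estimate the $w$-differences by the same trilinear bounds (each $E_*^Y$ is multilinear, so differences split into a telescoping sum of terms each with one factor replaced by a difference), and absorb the term containing $v[w]-v[w']$ using the contraction constant $< 1$; this gives $\|v[w]-v[w']\|_{Y_0(I)} \lesssim \|w-w'\|_{Z_0(I)}$. Finally, that $\mathcal W$ as defined in \eqref{defw} is exactly the image of $w \mapsto v[w]$ is immediate: membership in $\mathcal W$ asserts the existence of such a $w$, and the solution it produces must coincide with $v[w]$ by the uniqueness just established.

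The main obstacle is the first step — proving the trilinear $Y_0$-estimates for $E_N^Y$ and $E_L^Y$ with the correct placement of $Z_0$ versus $Y_0$ norms on the inputs. One must carefully track, via the formula \eqref{formforxy} and the kernel bound \eqref{kybd}, how the $\sigma$-integration and the convolution constraint $\lambda_2 + \lambda_3 - \lambda_1 = \sigma - \Delta$ interact with the modulation weights $\langle\lambda_j\rangle^{b}$ and the spatial weights $\langle k_j\rangle^{1/2}$; the asymmetry in \eqref{defw} (two $w$'s and one $v$ in the $\mathbb X_N$ piece, one $w$ and two $v$'s in the $\mathbb X_L$ piece) is dictated precisely by where the extra smoothing from $Z_0$ (larger $b_0 = 1-2\delta$) is needed to close. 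I would handle this by dyadic decomposition in the frequencies $|k|,|k_1|,|k_2|,|k_3|$ and in the modulations, using Proposition \ref{splitting} to fix the frequency geometry in each region (e.g. $|k|\sim|k_1| \gg |k_2|,|k_3|$ in $\mathbb X_L$), summing the resulting geometric series in the dyadic parameters, and invoking the divisor bound Lemma \ref{newdiv} wherever a sum over lattice points constrained by $\Delta = \mathrm{const}$ must be counted. The $T^\theta$ smallness is then not strictly needed for existence but is the clean way to get a genuine contraction with constants independent of the $A_j$'s.
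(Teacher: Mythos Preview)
Your proposal is correct and follows essentially the same route as the paper: reduce to the trilinear estimates (\ref{trilineary1})--(\ref{trilineary2}), pass to global extensions and the truncated operators $\mathcal{E}_*^Y$, prove the output in $Y_1$ via (\ref{formforxy}) and the kernel bound (\ref{kybd}) together with $|\Delta|\sim\langle k\rangle\langle k_1\rangle$, dyadic decomposition and Lemma~\ref{newdiv}, then apply Proposition~\ref{stcut} to extract $T^\theta$. One small clarification: the $Y_1$ norm goes on the \emph{output} (not ``on the right''), and the paper in fact proves the slightly stronger bound $\|\mathcal{E}_*^Y(v_1,v_2,v_3)\|_{Y_1}\lesssim\|v_1\|_{Z_0}\|v_2\|_{Y_0}\|v_3\|_{Y_0}$ for both $*\in\{N,L\}$, so only the first input ever needs the $Z_0$ norm.
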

\subsection{Reducing to an equation for $w$} The next step is to reduce (\ref{intg2}) to an equation for $w$. We will construct a function $w$ satisfying $\|w\|_{Z_0(I)}\leq A_2$, such that the function $v=v[w]$ defined by Proposition \ref{cont} satisfies (\ref{intg2}). By direct calculation, we see that (\ref{intg2}) reduces to
\begin{equation}\label{dnls5}
\begin{aligned} w=e^{it\partial_x^2}v_0&+I\mathcal{Q}(v,\cdots,v)+I\mathcal{C}_H(v,v,v)+I\mathcal{C}_S(v,v,v)\\
&+I(\mathcal{C}_{N}(v,v,v)-\mathcal{C}_{N}(w,w,v))+I(\mathcal{C}_{L}(v,v,v)-\mathcal{C}_{L}(w,v,v))\\
&+E_{N}^{X}(w,w,v)+E_{L}^{X}(w,v,v).
\end{aligned}
\end{equation} where $v=v[w]$ (we will always assume this below) and satisfies
\begin{equation}\label{ansatz2}v=w+E_{N}^{Y}(w,w,v)+E_{L}^Y(w,v,v).
\end{equation} It is now clear that Theorem \ref{main} will be a consequence of the following
\begin{prop}\label{main2} The mapping that maps $w$ to the right hand side of (\ref{dnls5}) is a contraction mapping from the $A_2$-ball of $Z_0(I)$ to itself.
\end{prop}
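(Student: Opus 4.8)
\textbf{Proof proposal for Proposition \ref{main2}.} The plan is to show that the map $\Phi$ sending $w$ to the right-hand side of \eqref{dnls5} (with $v=v[w]$ supplied by Proposition \ref{cont}) both maps the $A_2$-ball of $Z_0(I)$ into itself and is a contraction there, so that the Banach fixed point theorem applies. The work splits according to the eight families of terms appearing in \eqref{dnls5}, and for each I would establish a multilinear estimate in $Z_0(I)=X^{1/2,b_0}_{p_0,q_0}(I)$ together with a factor of $T^\theta$ (a small power of the time length) coming from Proposition \ref{stcut}; since $A_2,A_3$ are fixed first and $T$ last, this $T^\theta$ gain will absorb all the implicit constants and close both the self-mapping and the contraction bounds. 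For the contraction part one uses the Lipschitz dependence $w\mapsto v[w]$ from Proposition \ref{cont} together with the multilinearity of every term, so that differences $\Phi(w)-\Phi(w')$ are controlled by $\|w-w'\|_{Z_0(I)}$ with the same small constant; thus I will concentrate on the a priori bounds, the contraction estimates being formally identical.

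First I would dispatch the easy terms. The linear term $e^{it\partial_x^2}v_0$ lies in $Z_0$ with norm $\lesssim A_1$ by the definition of the spaces and the bound $\|v_0\|_{H^{1/2}_{p_0}}\le A_1$; since $Z_0\subset Y_0\subset C^0_tH^{1/2}_{p_0}$ and $v\in$ the $A_3$-ball of $Y_0(I)$, everything on the right of \eqref{dnls5} other than the linear evolution is of the form $\mathcal{I}(\cdots)$ or $\mathcal{E}^{X}_*(\cdots)$, hence vanishes at $t=0$ and is amenable to the short-time gain \eqref{shorttime}. The quintic term $I\mathcal{Q}(v,\dots,v)$ carries no derivative, so a crude multilinear estimate in $X^{1/2,b_1}_{p_0,q_0}=Z_1$ followed by \eqref{shorttime} suffices; here Proposition \ref{quinprep} is the tool that handles the frequency interactions after substituting the ansatz \eqref{ansatz2} for the inner copies of $v$. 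The cubic pieces $I\mathcal{C}_H$ and $I\mathcal{C}_S$ are the ``semilinear'' parts: by \eqref{semilinear} the derivative $|k_1|$ is dominated by $\langle k\rangle^{1/2}(\langle k_1\rangle\langle k_2\rangle\langle k_3\rangle)^{1/2}$, so no net derivative loss occurs and these close by a direct trilinear estimate in $Z_1$ plus \eqref{shorttime}. The two difference terms $I(\mathcal{C}_N(v,v,v)-\mathcal{C}_N(w,w,v))$ and $I(\mathcal{C}_L(v,v,v)-\mathcal{C}_L(w,v,v))$ are, after inserting \eqref{ansatz2}, genuinely of degree $\ge 5$ in $(w,v)$ (each difference expands into terms each containing at least one factor of $E^Y_*$, which is itself trilinear), so the derivative is again spread over enough frequencies, using the non-resonance $|\Delta|\sim\langle k\rangle\langle k_1\rangle$ from \eqref{nonres} and the kernel bound \eqref{kybd} for the inner $E^Y$, to be estimated in $Z_1$ and then gain $T^\theta$.

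The heart of the matter is the pair of ``extra'' terms $E^X_N(w,w,v)$ and $E^X_L(w,v,v)$, which are not of the form $\mathcal{I}(\cdots)$ of a product but carry the singular kernel $K^X_\Delta$. For these I would use the decomposition $K^X_\Delta=K^{X,0}_\Delta+K^{X,+}_\Delta$ from Remark \ref{further}: the $K^{X,0}$ piece gains two powers of $\langle\Delta\rangle^{-1}\sim(\langle k\rangle\langle k_1\rangle)^{-1}$ by \eqref{k0bd}, which more than compensates the derivative $|k_1|$ and leaves room to land directly in $Z_0$ (even gaining regularity), while the $K^{X,+}$ piece is supported where the output modulation $\langle\lambda\rangle$ or the transfer modulation $\langle\lambda-\sigma\rangle$ is comparable to $\langle\sigma-\Delta\rangle$, i.e. to the modulation of the input product, so one trades the weight $\langle\sigma-\Delta\rangle$ for a gain in one of the inputs' $X^{1/2,b}$ norms and closes in $Z_0$ again via \eqref{shorttime}; the divisor bound Lemma \ref{newdiv}(2) is what controls the number of frequency configurations contributing to a fixed output when summing in $\ell^{p_0}_k$. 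I expect the main obstacle to be precisely the bookkeeping for $E^X_N$ and $E^X_L$: one must simultaneously track the derivative $k_1$, the two modulation variables in the kernel, the $\ell^{p_0}_k L^{q_0}_\lambda$ structure with $p_0\ge 4$ (where the classical trilinear estimate fails), and the para-controlled substitution of \eqref{ansatz2} into the copies of $v$ — and show that after all cancellations every term genuinely lands back in the $A_2$-ball of $Z_0(I)$ with a $T^\theta$ to spare. Once all eight estimates are in hand, choosing $T$ small depending on the $A_j$'s makes $\Phi$ a contraction, and its unique fixed point $w$ yields, via Proposition \ref{cont}, the solution $v\in\mathcal{W}$ to \eqref{intg2} and hence, after inverting the gauge, Theorem \ref{main}.
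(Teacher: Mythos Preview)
Your high-level strategy matches the paper's, but there is a recurring gap: you systematically underestimate where the ansatz substitution \eqref{ansatz2} must be invoked. The point of the construction is that $v$ lies only in $Y_0$, and many estimates need a particular input slot to be in the stronger space $Z_0$; whenever that slot carries a copy of $v$, one must substitute $v=w+E_N^Y(\cdots)+E_L^Y(\cdots)$ before the estimate can close.

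For $I\mathcal{C}_H(v,v,v)$ and $I\mathcal{C}_S(v,v,v)$ this already bites. The semilinear bound \eqref{semilinear} does eliminate the spatial derivative, but it does not let the output reach $Z_1$ when the inputs are only in $Y_0$: the paper's cubic estimate (Proposition~\ref{z21}) uses an extra $\langle\lambda_j\rangle^{\delta}$ on each input---available only from the $Z_0$ norm with $b_0=1-2\delta$---to absorb the $\langle\Delta\rangle^{\theta}$ loss from the divisor bound. So the paper first substitutes \eqref{ansatz2} for each copy of $v$, producing one genuinely cubic term $z_{21}$ with all inputs $w\in Z_0$ plus six quintic terms $z_{22}$--$z_{27}$ treated separately. (Incidentally, Proposition~\ref{quinprep} is not used for $\mathcal{Q}$; it is the tool for the quintic $z_{5*}$ terms and $z_{26},z_{27}$.)

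The issue is sharper for $K^{X,+}$. Trading $\langle\sigma-\Delta\rangle$ for input modulation only helps if that input lies in $Z_0$; when the large-modulation input is a copy of $v\in Y_0$, the estimate does not close. The paper handles this by splitting $\mathcal{E}^{X,+}_*=\sum_{j=1}^{3}\mathcal{E}^{X,j}_*$ according to which $|\lambda_j|$ is maximal, and whenever slot $j$ holds $v$ rather than $w$, substituting \eqref{ansatz2} there. The resulting quintic $z_{5*}$ terms are the hardest in the paper and require the dedicated Propositions~\ref{extra1}--\ref{extra2} together with the case analysis of Proposition~\ref{quinprep}. A parallel device (step~(6) of Section~\ref{splitz}) is applied to every quintic term: one locates the input of maximal spatial frequency, avoiding pairings, and substitutes \eqref{ansatz2} if it is a $v$, so that the dominant-frequency factor always sits in $Z_0$; this is precisely what makes the canonical quintic estimate (Proposition~\ref{canonical}) work. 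Without these substitutions your plan would stall on exactly the failure of the trilinear estimate that motivated the whole para-controlled construction.
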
 This proposition will be proved in Section \ref{recoverw}.
\section{Proof of Proposition \ref{cont}}\label{recoverv} 
In  this section  we prove a prior bounds for the para-controlling terms which will crucially enter in the next section
We start by noting that $Z_0(I)\subset Y_0(I)$. In order to prove Proposition \ref{cont}, it suffices to prove the trilinear estimates
\begin{align}\label{trilineary1}\|E_N^Y(v_1,v_2,v_3)\|_{Y_{0}(I)}&\lesssim T^{\theta}\|v_1\|_{Z_0(I)}\|v_2\|_{Z_0(I)}\|v_3\|_{Y_0(I)},\\
\label{trilineary2}\|E_L^Y(v_1,v_2,v_3)\|_{Y_0(I)}&\lesssim T^{\theta}\|v_1\|_{Z_0(I)}\|v_2\|_{Y_0(I)}\|v_3\|_{Y_0(I)}.
\end{align} In fact, these would imply that given $w$ which satisfies $\|w\|_{X_{p,q}^{\frac{1}{2},1}(I)}\leq A_2$, the mapping
\[v\mapsto w+E_N^Y(w,w,v)+E_L^Y(w,v,v)\] is a contraction mapping from the $A_3$-ball of $Y_0(I)$ to itself. It then has a unique fixed point $v=v[w]$, and the Lipschitz property of the mapping $w\mapsto v$ is also easily checked.

In order to prove (\ref{trilineary1}) and (\ref{trilineary2}), we will assume that $w^+$ and $v^+$ are extensions of $w$ and $v$ respectively, such that $\|w^+\|_{Z_0}\leq 2A_2$ and $\|v\|_{Y_0}\leq 2A_3$. Recall that $\varphi_T(t)=\varphi(T^{-1}t)$, clearly $\varphi_T\cdot \mathcal{E}_N^Y(w^+,w^+,v^+)$ and $\varphi_T\cdot \mathcal{E}_L^Y(w^+,v^+,v^+)$ are extensions of $E_N^Y(w,w,v)$ and $E_L^Y(w,v,v)$ respectively. Using also Proposition \ref{stcut}, we can reduce Proposition \ref{cont} to the following
\begin{prop}\label{contnew} We have the following bounds
\begin{align}\label{trilineary3}\|\mathcal{E}_N^Y(v_1,v_2,v_3)\|_{Y_1}&\lesssim \|v_1\|_{Z_0}\|v_2\|_{Z_0}\|v_3\|_{Y_0},\\
\label{trilineary4}\|\mathcal{E}_L^Y(v_1,v_2,v_3)\|_{Y_1}&\lesssim \|v_1\|_{Z_0}\|v_2\|_{Y_0}\|v_3\|_{Y_0}.
\end{align}
\end{prop}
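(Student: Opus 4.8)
The plan is to establish the two trilinear estimates \eqref{trilineary3} and \eqref{trilineary4} by working entirely on the spacetime Fourier side, using the explicit kernel representation \eqref{formforxy} from Proposition \ref{newexp} together with the pointwise bound \eqref{kybd} on $K_\Delta^Y$. First I would write out $\|\mathcal{E}_*^Y(v_1,v_2,v_3)\|_{Y_1}$ with $Y_1 = X^{\frac12,\frac12}_{p_0,r_1}$, so that the goal becomes a bound on $\|\langle k\rangle^{1/2}\langle\lambda\rangle^{1/2}\,\mathscr{X}\mathcal{E}_*^Y(k,\lambda)\|_{\ell^{p_0}_k L^{r_1}_\lambda}$. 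Using \eqref{kybd}, namely $|K_\Delta^Y(\lambda,\sigma)|\lesssim \langle\lambda-\sigma\rangle^{-1}\min(\langle\Delta\rangle^{-1},\langle\lambda\rangle^{-1})$, the $\sigma$-integral is a convolution in the $\lambda$-variable against an $L^1$-type kernel, and the weight $\langle\lambda\rangle^{1/2}$ can be traded: the factor $\langle\lambda\rangle^{1/2}$ times $\min(\langle\Delta\rangle^{-1},\langle\lambda\rangle^{-1})$ is $\lesssim \langle\Delta\rangle^{-1/2}\langle\sigma\rangle^{-1/2}$-type (after absorbing $\langle\lambda-\sigma\rangle^{-1/2+}$ losses into the rapidly decaying part of the kernel and using $\langle\lambda\rangle\lesssim\langle\sigma\rangle\langle\lambda-\sigma\rangle$). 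This reduces matters, via Young's/Minkowski's inequality in $\lambda$, to a purely ``resolvent-free'' multilinear estimate: bounding
\begin{equation*}
\Big\|\langle k\rangle^{\frac12}\!\!\sum_{\mathbb{X}_*}\frac{|k_1|}{\langle\Delta\rangle^{\frac12}}\!\!\int_{\lambda_2+\lambda_3-\lambda_1=\sigma-\Delta}\!\!\!\!\!|\widetilde{v_1}(k_1,\lambda_1)||\widetilde{v_2}(k_2,\lambda_2)||\widetilde{v_3}(k_3,\lambda_3)|\Big\|_{\ell^{p_0}_k L^{r_1}_\sigma}
\end{equation*}
by the stated product of $Z_0$ and $Y_0$ norms.

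The second main step is the frequency analysis. On $\mathbb{X}_N$ and $\mathbb{X}_L$ we have the gain \eqref{nonres}: $|\Delta|\sim\langle k\rangle\langle k_1\rangle$, so $|k_1|\langle\Delta\rangle^{-1/2}\lesssim |k_1|^{1/2}\langle k\rangle^{-1/2}$, and thus $\langle k\rangle^{1/2}|k_1|\langle\Delta\rangle^{-1/2}\lesssim \langle k_1\rangle^{1/2}$ — exactly enough to match the $H^{1/2}_{p_0}$-type weight on the $v_1$-input and kill the derivative. What remains is a trilinear convolution estimate without derivative loss, of the schematic form $\|\,|\widetilde{v_1}|*|\widetilde{v_2}|*|\widetilde{v_3}|\,\|\lesssim \prod\|v_j\|$ with appropriate $\ell^{p_0}_k$ and $L^{r}_\sigma$ Lebesgue exponents. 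The $\ell^{p_0}_k$ side is handled by Hölder in the frequency $k$-sum once the constraint $k_2+k_3-k_1=k$ and the relative sizes from Proposition \ref{splitting}(2),(4) are used — in the $\mathbb{X}_N$ case $\min(|k|,|k_1|)\gtrsim |k_3|$ places $k_3$ at low frequency (so $v_3\in Y_0$ loses nothing and one can even afford an $\ell^\infty_k$ or $\ell^1_k$ placement on that factor), and in $\mathbb{X}_L$ both $k_2,k_3$ are much smaller than $k\sim k_1$. The $L^r_\sigma$ side is handled by the one-dimensional convolution estimate $L^{r_0}*L^{r_0}*L^{r_0}\hookrightarrow L^{r_1}$ (or with $L^\infty$ replacing one factor), which holds precisely because of the arithmetic relation $3/r_0 = 1 + 1/r_1 + (\text{small})$ built into \eqref{param}; here the extra smallness $\theta$ in the $b$-index of $Y_0$ versus $Y_1$, and the $T^\theta$ already extracted earlier, provide the slack. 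One also needs to treat the diagonal term $\{(k,k,k)\}\subset\mathbb{V}_3$ separately, but it is not in $\mathbb{X}_N\cup\mathbb{X}_L$ (it lives in neither), so it does not arise here.

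The main obstacle I anticipate is the bookkeeping of which input gets which Lebesgue exponent: $v_1$ and $v_2$ (resp. $v_1$) are measured in the stronger space $Z_0 = X^{\frac12,b_0}_{p_0,q_0}$ with $q_0 = 1/(4\delta)$ large, while $v_3$ (resp. $v_2,v_3$) sit in the weaker $Y_0 = X^{\frac12,\frac12}_{p_0,r_0}$ with $r_0$ close to $2$. In the convolution in $\sigma$ one must place the $Z_0$-factors — which have the higher $L^{q_0}_\lambda$ integrability but only the modest $\langle\lambda\rangle^{b_0}$ weight — so as to both absorb the $\langle\lambda-\sigma\rangle^{-1}$ kernel decay and produce the target $L^{r_1}_\sigma$ norm with an honest power of $T$ to spare. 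Getting the exponents to close requires carefully exploiting that $b_0 = 1-2\delta$ is close to $1$ so that $\langle\lambda\rangle^{1/2}$ can be dominated by $\langle\Delta\rangle^{-1/2}\langle\lambda\rangle^{b_0}$ after using $\langle\lambda\rangle\lesssim\langle\Delta\rangle\langle\lambda-\Delta\rangle$ and $\langle\lambda-\Delta\rangle\lesssim\langle\sigma-\Delta\rangle\langle\lambda-\sigma\rangle$ — a point where the precise form of \eqref{kybd} (with the $\langle\Delta\rangle^{-1}$ option) is essential. The high-high interactions $\mathbb{X}_H$ and the ``semilinear'' piece $\mathbb{X}_S$ do \emph{not} appear in \eqref{trilineary3}--\eqref{trilineary4}, which is what makes these estimates tractable; the genuinely hard interactions are deferred to Section \ref{recoverw}.
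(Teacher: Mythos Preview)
Your overall strategy---work on the Fourier side, use the kernel bound \eqref{kybd}, exploit $|\Delta|\sim\langle k\rangle\langle k_1\rangle$ on $\mathbb{X}_L\cup\mathbb{X}_N$ via $\langle\lambda\rangle^{1/2}\min(\langle\Delta\rangle^{-1},\langle\lambda\rangle^{-1})\lesssim\langle\Delta\rangle^{-1/2}$---matches the paper and is correct as far as it goes. The gap is in your handling of the $\ell^{p_0}_k$ sum.

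After the $\langle\Delta\rangle^{-1/2}$ gain cancels $\langle k\rangle^{1/2}|k_1|$ against $\langle k_1\rangle^{1/2}$, you are left with a trilinear sum $\sum_{k_2+k_3-k_1=k}f_1(k_1)f_2(k_2)f_3(k_3)$ that must land in $\ell^{p_0}_k$, where $\langle k_j\rangle^{1/2}f_j\in\ell^{p_0}$. A direct Young/H\"older argument of the kind you describe forces two of the factors into $\ell^1_k$, which requires $\langle k\rangle^{-1/2}\in\ell^{p_0'}_k$; this \emph{fails} precisely when $p_0\geq 2$, and the whole point of the proposition is $p_0\geq 4$. The ``relative sizes'' from Proposition~\ref{splitting} do not help: localizing $|k_2|\sim N_2$, $|k_3|\sim N_3$ and passing to $\ell^1$ costs $(N_2N_3)^{1/2-1/p_0}$, which diverges upon summation. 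Your claimed Young relation on the $\sigma$-side, $3/r_0=1+1/r_1+(\text{small})$, is also off by~$1$ from the actual Young exponent condition, though this is a secondary issue.

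What the paper actually does is different in one essential respect: after freezing $(\lambda_1,\lambda_2,\lambda_3)$ (each $\widetilde{v_j}$ lands in $L^1_\lambda$ from $Z_0$ or $Y_0$), it organizes the $k$-sum by first fixing the value of $\Delta$ and invoking the divisor bound Lemma~\ref{newdiv} to control the number of $(k_1,k_2,k_3)\in\mathbb{X}_*$ with given $(k,\Delta)$. This converts the sum into an $\ell^{r_2}$-type expression (with an acceptable $N_2^\theta$ loss), after which a Schur estimate in $(\lambda,\Delta)$ closes the argument. The divisor-counting step is the missing idea in your proposal; without it the frequency sum does not close in the regime $p_0\geq 4$.
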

\begin{proof}Let $*\in\{N,L\}$, using the embedding $Z_0\subset Y_0$, we only need to prove the stronger result 
\begin{equation}\label{trilineary5}\|\mathcal{E}_*^Y(v_1,v_2,v_3)\|_{Y_1}\lesssim \|v_1\|_{Z_0}\|v_2\|_{Y_0}\|v_3\|_{Y_0}.\end{equation}Let $\mathcal{E}=\mathcal{E}_*^Y(v_1,v_2,v_3)$, we may assume the norms on the right hand side are all equal to $1$. Recall from (\ref{formforxy}) and (\ref{kybd}) that
\begin{equation}\label{expe}|\widetilde{\mathcal{E}}(k,\lambda)|\lesssim\sum_{\mathbb{X}_*}|k_1|\min\bigg(\frac{1}{\langle \Delta\rangle},\frac{1}{\langle \lambda\rangle}\bigg)\int_{\lambda_2+\lambda_3+\lambda_4-\lambda_1=\lambda-\Delta}\frac{1}{\langle \lambda_4\rangle}\prod_{j=1}^3|\widetilde{v_j}(k_j,\lambda_j)|,
\end{equation} where $\Delta=2(k-k_2)(k-k_3)$ as before. we may restrict to the dyadic region $\langle k_2\rangle\sim N_2$ and $\langle k_3\rangle\sim N_3$ (so $N_2\gtrsim N_3$), where $N_2$ and $N_3$ are powers of two.

Recall that $\frac{1}{r_2}=(\frac{1}{2})+3\delta$ (so $r_2<r_0$). Notice that 
\[\|\langle k_1\rangle^{\frac{1}{2}}\widetilde{v_1}\|_{L_{\lambda}^1\ell_k^{p_0}}\lesssim\|\langle k_1\rangle^{\frac{1}{2}}\langle\lambda_1\rangle^{b_0}\widetilde{v_1}\|_{L_{\lambda}^{q_0}\ell_k^{p_0}}\lesssim\|\langle k_1\rangle^{\frac{1}{2}}\langle\lambda_1\rangle^{b_0}\widetilde{v_1}\|_{\ell_k^{p_0}L_{\lambda}^{q_0}}\lesssim 1\]by H\"{o}lder and Minkowski, and similarly
\begin{multline*}\|\langle k_j\rangle^{(1-\sqrt{\delta})/p_0}\widetilde{v_j}\|_{L_\lambda^1\ell_k^{r_2}}\lesssim \|\langle k_j\rangle^{(1-\sqrt{\delta})/p_0}\langle\lambda_j\rangle^{\frac{1}{2}}\widetilde{v_j}\|_{L_\lambda^{r_0}\ell_k^{r_2}}\\\lesssim\|\langle k_j\rangle^{(1-\sqrt{\delta})/p_0}\langle\lambda_j\rangle^{\frac{1}{2}}\widetilde{v_j}\|_{\ell_k^{r_2}L_\lambda^{r_0}}\lesssim\|\langle k_j\rangle^{\frac{1}{2}}\langle \lambda_j\rangle^{\frac{1}{2}}\widetilde{v_j}\|_{\ell_k^{p_0}L_\lambda^{r_0}}\lesssim 1\end{multline*} for $j\in\{2,3\}$, we may then fix $(\lambda_1,\lambda_2,\lambda_3)$ which we eventually intergate over, and denote \[|\widetilde{v_1}(k_1,\lambda_1)|=\langle k_1\rangle^{-\frac{1}{2}}f_1(k_1),\quad|\widetilde{v_j}(k_j,\lambda_j)|=N_j^{-(1-\sqrt{\delta})/p_0}f_j(k_j),\,\,2\leq j\leq 3,\] where (after a further normalization)
\begin{equation}\label{bddf}\|f_1\|_{\ell_k^{p_0}}\lesssim 1,\quad \|f_j\|_{\ell_k^{r_2}}\lesssim 1\,\,(k=2,3),\end{equation} and it will suffice to prove that for any fixed $\mu(=\lambda_2+\lambda_3-\lambda_1)\in\mathbb{R}$,
\begin{equation}\label{red1}\bigg\|\langle k\rangle^{\frac{1}{2}}\langle \lambda\rangle^{\frac{1}{2}}\sum_{\mathbb{X}_*}\langle k_1\rangle^{\frac{1}{2}}\min\bigg(\frac{1}{\langle \Delta\rangle},\frac{1}{\langle \lambda\rangle}\bigg)\frac{1}{\langle \lambda-\Delta-\mu\rangle}\prod_{j=1}^3f_j(k_j)\bigg\|_{\ell_k^{p_0}L_\lambda^{r_1}}\lesssim (N_2N_3)^{(1-\sqrt{\delta})/p_0-\theta}.
\end{equation} In the above summation over $(k_1,k_2,k_3)\in \mathbb{X}_*$, we may first fix $\Delta$ and sum over $(k_1,k_2,k_3)\in \mathbb{X}_*$ that corresponds to this fixed $\Delta$. 

We first assume $*=L$, which is the slightly harder case. Note that $\langle k\rangle\sim\langle k_1\rangle$, by Lemma \ref{newdiv}, we can bound the left hand side of (\ref{red1}) by
\begin{equation}\label{red1exp}\bigg\|\langle k\rangle\langle \lambda\rangle^{\frac{1}{2}}\sum_{\Delta}F(k,\Delta)\min\bigg(\frac{1}{\langle \Delta\rangle},\frac{1}{\langle \lambda\rangle}\bigg)\frac{1}{\langle \lambda-\Delta-\mu\rangle}\bigg\|_{\ell_k^{p_0}L_\lambda^{r_1}},
\end{equation} where
\[F(k,\Delta)=\sum_{\substack{(k_1,k_2,k_3)\in \mathbb{X}_L\\k^2+k_1^2-k_2^2-k_3^2=\Delta}}\prod_{j=1}^3f_j(k_j)\lesssim N_2^\theta\bigg(\sum_{\substack{(k_1,k_2,k_3)\in \mathbb{X}_L\\k^2+k_1^2-k_2^2-k_3^2=\Delta}}\prod_{j=1}^3f_j(k_j)^{r_2}\bigg)^{\frac{1}{r_2}}.\] Using the facts that \[\min\bigg(\frac{1}{\langle \Delta\rangle},\frac{1}{\langle \lambda\rangle}\bigg)\lesssim\langle \Delta\rangle^{-\frac{1}{2}}\langle\lambda\rangle^{-\frac{1}{2}},\quad \langle \Delta\rangle\sim \langle k\rangle^2\]and by Schur's estimate, we can bound
\[\bigg\|\langle k\rangle\langle \lambda\rangle^{\frac{1}{2}}\sum_{\Delta}F(k,\Delta)\min\bigg(\frac{1}{\langle \Delta\rangle},\frac{1}{\langle \lambda\rangle}\bigg)\frac{1}{\langle \lambda-\Delta-\mu\rangle}\bigg\|_{L_\lambda^{r_1}}\lesssim\|F(k,\Delta)\|_{\ell_{\Delta}^{r_2}}\] for each fixed $k$. By the definition of $F(k,\Delta)$, it then suffices to prove that
\begin{equation}\label{newadd}N_2^\theta\bigg\|\bigg(\sum_{\mathbb{X}_L}\prod_{j=1}^3f_j(k_j)^{r_2}\bigg)^{\frac{1}{r_2}}\bigg\|_{\ell_k^{p_0}}\lesssim (N_2N_3)^{(1-\sqrt{\delta})/p_0}N_2^{-\theta}.\end{equation} Let $f_j(k_j)^{r_2}=g_j(k_j)$ and $\beta=(p_0/r_2)$, it suffices to prove (for a possibly different $\theta$) that
\[\bigg\|\sum_{\substack{|k_2|\sim N_2\\|k_3|\sim N_3}}g_1(k_2+k_3-k)g_2(k_2)g_3(k_3)\bigg\|_{\ell_k^{\beta}}\lesssim (N_2N_3)^{r_2(1-\sqrt{\delta})/p_0} N_2^{-\theta}.\] As $\|g_1\|_{\ell_k^{\beta}}=\|f_1\|_{\ell_k^{p_0}}^{r_2}\lesssim1$, by Minkowski we can bound the above by
\begin{equation}\label{neg}\|g_2\|_{\ell_k^1}\|g_3\|_{\ell_k^1}=\|f_2\|_{\ell_k^{r_2}}^{r_2}\|f_3\|_{\ell_k^{r_2}}^{r_2}\lesssim 1\end{equation} using (\ref{bddf}). This finishes the case $*=L$.

When $*=N$, we will further assume $\langle k\rangle\sim N_0$ and $\langle k_1\rangle\sim N_1$, then all the proof will be the same as above, using the fact that
\[\langle k\rangle^{\frac{1}{2}}\langle k_1\rangle^{\frac{1}{2}}\sim N_0N_1\sim\langle \Delta\rangle^{\frac{1}{2}}.\] The sum over $N_0$ and $N_1$ is then taken care of using the positive power of $N_2$ on the right hand side of (\ref{newadd}), and the fact that $N_2\gtrsim \max(N_0,N_1)$ when $(k_1,k_2,k_3)\in\mathbb{X}_N$.
\end{proof}
\section{Proof of Proposition \ref{main2}}\label{recoverw} 

This section constitutes the heart of the paper. Here we find $w$ and prove all the underlying multilinear estimates involved in its construction. In the course of the proof we show in particular that all relevant nonlinearities are well defined as space-time distributions whence the integral equation \eqref{intg} for $u$ will be equivalent to the integral equation formulation of \eqref{gdnls} from Section \ref{setup}.

Given $w$ satisfying $\|w\|_{Z_0(I)}\leq A_2$, let $w^+$ be an extension of $w$ such that $\|w^+\|_{Z_0}\leq 2A_2$. By the proof of Proposition \ref{cont} in Section \ref{recoverv}, we know that there is a unique $v^+=v^+[w^+]$ such that $\|v^+\|_{Y_0}\leq A_3$, and 
\begin{equation}\label{defvplus}v^+=w^++\varphi_T\cdot\mathcal{E}_N^Y(w^+,w^+,v^+)+\varphi_T\cdot\mathcal{E}_L^Y(w^+,v^+,v^+).
\end{equation} Moreover this $v^+$ is an extension of $v=v[w]$. Therefore, recall that $\psi:=\varphi(t)e^{it\partial_x^2}v_0$, the function
\begin{equation}\label{dnlsext}
\begin{aligned} z:=\psi&+\varphi_T\cdot\mathcal{IQ}(v^+,\cdots,v^+)+\varphi_T\cdot\mathcal{IC}_H(v^+,v^+,v^+)+\varphi_T\cdot\mathcal{IC}_S(v^+,v^+,v^+)\\
&+\varphi_T\cdot\mathcal{I}(\mathcal{C}_{N}(v^+,v^+,v^+)-\mathcal{C}_{N}(w^+,w^+,v^+))+\varphi_T\cdot\mathcal{I}(\mathcal{C}_{L}(v^+,v^+,v^+)-\mathcal{C}_{L}(w^+,v^+,v^+))\\
&+\varphi_T\cdot\mathcal{E}_{N}^{X}(w^+,w^+,v^+)+\varphi_T\cdot\mathcal{E}_{L}^{X}(w^+,v^+,v^+)
\end{aligned}
\end{equation} will be an extension of the right hand side of (\ref{dnls5}).
\subsection{Splitting the formula of $z$}\label{splitz}
Now that $w^+$, $v^+$ and $z$ are defined for all time, we can further manipulate the expression of $z$, as this manipulation sometimes requires inserting time-frequency cutoffs. We will analyze each term in (\ref{dnlsext}) separately. The initial data term $\psi$ is trivial. For the other terms, we will remove the $\varphi_T$ factor in front, and bound the corresponding terms in the stronger space $Z_1$; Proposition \ref{stcut} then allows us to gain a factor $T^\theta$ which provides the required smallness.

(1) \underline{The term $\mathcal{IQ}(v^+,\cdots,v^+)$.} This is a single term, we will name it
\begin{equation}\label{defz1}z_{11}=\mathcal{IQ}(v^+,\cdots,v^+).\end{equation}

(2) \underline{The term $\varphi_T\cdot\mathcal{I}(\mathcal{C}_H+\mathcal{C}_S)(v^+,v^+,v^+)$}. Here decomposing $v^+$ by (\ref{defvplus}), we can obtain the following terms
\begin{equation}\label{defz2}
\begin{aligned}z_{21}&=\mathcal{I}(\mathcal{C}_H+\mathcal{C}_S)(w^+,w^+,w^+),\\ z_{22}&=\mathcal{I}(\mathcal{C}_H+\mathcal{C}_S)(\varphi_T\cdot\mathcal{E}_{N}^{Y}(w^+,w^+,v^+),v^+,v^+),\\
\quad z_{23}&=\mathcal{I}(\mathcal{C}_H+\mathcal{C}_S)(\varphi_T\cdot\mathcal{E}_{L}^{Y}(w^+,v^+,v^+),v^+,v^+),\\
z_{24}&=\mathcal{I}(\mathcal{C}_H+\mathcal{C}_S)(w^+,\varphi_T\cdot\mathcal{E}_{N}^{Y}(w^+,w^+,v^+),v^+),\\
z_{25}&=\mathcal{I}(\mathcal{C}_H+\mathcal{C}_S)(w^+,\varphi_T\cdot\mathcal{E}_{L}^{Y}(w^+,v^+,v^+),v^+),\\
z_{26}&=\mathcal{I}(\mathcal{C}_H+\mathcal{C}_S)(w^+,w^+,\varphi_T\cdot\mathcal{E}_{N}^{Y}(w^+,w^+,v^+)),\\
z_{27}&=\mathcal{I}(\mathcal{C}_H+\mathcal{C}_S)(w^+,w^+,\varphi_T\cdot\mathcal{E}_{L}^{Y}(w^+,v^+,v^+)).
\end{aligned}
\end{equation} Here $z_{21}$ is a cubic expression, and the others are quintic expressions.

(3) \underline{The term $\mathcal{I}(\mathcal{C}_{N}(v^+,v^+,v^+)-\mathcal{C}_{N}(w^+,w^+,v^+))$}. Similar to (2), we can obtain the terms
\begin{equation}\label{defz3}
\begin{aligned}z_{31}&=\mathcal{I}(\mathcal{C}_{N}(\varphi_T\cdot\mathcal{E}_{N}^{Y}(w^+,w^+,v^+),v^+,v^+),\\
z_{32}&=\mathcal{I}(\mathcal{C}_{N}(\varphi_T\cdot\mathcal{E}_{L}^{Y}(w^+,v^+,v^+),v^+,v^+),
\\z_{33}&=\mathcal{I}(\mathcal{C}_{N}(w^+,\varphi_T\cdot\mathcal{E}_{N}^{Y}(w^+,w^+,v^+),v^+),\\
z_{34}&=\mathcal{I}(\mathcal{C}_{N}(w^+,\varphi_T\cdot\mathcal{E}_{L}^{Y}(w^+,v^+,v^+),v^+).
\end{aligned}
\end{equation} They are all quintic expressions.

(4) \underline{The term $\mathcal{I}(\mathcal{C}_{L}(v^+,v^+,v^+)-\mathcal{C}_{L}(w^+,v^+,v^+))$}. In the same way we get two terms
\begin{equation}\label{defz4}
\begin{aligned}z_{41}&=\varphi_T\cdot\mathcal{I}(\mathcal{C}_{L}(\mathcal{E}_{N}^{Y}(w^+,w^+,v^+),v^+,v^+),\\
z_{42}&=\mathcal{I}(\mathcal{C}_{L}(\varphi_T\cdot\mathcal{E}_{L}^{Y}(w^+,v^+,v^+),v^+,v^+).
\end{aligned}
\end{equation} They are both quintic expressions.

(5) \underline{The term $\mathcal{E}_{N}^{X}(w^+,w^+,v^+)+\mathcal{E}_{L}^{X}(w^+,v^+,v^+)$.} This term requires a little more care. Let $*\in\{N,L\}$, recall that from Proposition \ref{newexp} and Remark \ref{further}, we have
\begin{multline}
\label{expnew}
\mathscr{X}\mathcal{E}_*^{X,+}(v_1,v_2,v_3)(k,\lambda)=\sum_{\mathbb{X}_*}k_1M_3(k,k_1,k_2,k_3)\int_{\mathbb{R}}K_{\Delta}^{X,+}(\lambda,\sigma)\,\mathrm{d}\sigma\\\int_{\lambda_2+\lambda_3-\lambda_1=\sigma-\Delta}\overline{\widetilde{v_1}(k_1,\lambda_1)}\widetilde{v_2}(k_2,\lambda_2)\widetilde{v_3}(k_3,\lambda_3).
\end{multline} We may further decompose this expression into $\mathcal{E}_*^{X,+}=\mathcal{E}_*^{X,1}+\mathcal{E}_*^{X,2}+\mathcal{E}_*^{X,3}$, where in $\mathcal{E}_*^{X,j}$ we make the restriction\[|\lambda_j|=\max_{1\leq \ell\leq 3}|\lambda_{\ell}|,\quad |\lambda_j|\gtrsim|\sigma-\Delta|.\] Now if $*=N$ and $j=3$, or $*=L$ and $j\in\{2,3\}$, we will make decompose the $v^+$ corresponding to frequency $\lambda_j$ using (\ref{defvplus}). This gives the following terms
\begin{equation}\label{defz5}
\begin{aligned}z_{51}&=(\mathcal{E}_{N}^{X,0}+\mathcal{E}_{N}^{X,1}+\mathcal{E}_{N}^{X,2})(w^+,w^+,v^+),\\
z_{52}&=\mathcal{E}_{N}^{X,3}(w^+,w^+,w^+),\\
z_{53}&=\mathcal{E}_{N}^{X,3}(w^+,w^+,\varphi_T\cdot\mathcal{E}_{N}^{Y}(w^+,w^+,v^+)),\\
z_{54}&=\mathcal{E}_{N}^{X,3}(w^+,w^+,\varphi_T\cdot\mathcal{E}_{L}^{Y}(w^+,v^+,v^+)),\\
z_{55}&=(\mathcal{E}_{L}^{X,0}+\mathcal{E}_{L}^{X,1})(w^+,v^+,v^+),\\
z_{56}&=\mathcal{E}_{L}^{X,2}(w^+,w^+,v^+),\\
z_{57}&=\mathcal{E}_{L}^{X,2}(w^+,\varphi_T\cdot\mathcal{E}_{N}^{Y}(w^+,w^+,v^+),v^+),\\
z_{58}&=\mathcal{E}_{L}^{X,2}(w^+,\varphi_T\cdot\mathcal{E}_{L}^{Y}(w^+,v^+,v^+),v^+),\\
z_{59}&=\mathcal{E}_L^{X,3}(w^+,v^+,w^+),\\
z_{5A}&=\mathcal{E}_L^{X,3}(w^+,v^+,\varphi_T\cdot\mathcal{E}_{N}^{Y}(w^+,w^+,v^+)),\\
z_{5B}&=\mathcal{E}_L^{X,3}(w^+,v^+,\varphi_T\cdot\mathcal{E}_{L}^{Y}(w^+,v^+,v^+)).
\end{aligned}
\end{equation} Some of these are cubic expressions, and some of them are quintic.

(6) \underline{An operation on quintic terms}. Each of the above $z_{j\ell}$'s is a multilinear expression, either cubic or quintic; we will always list its input functions from left to right. Consider now a general quintic term. Let $k$ and $k_j$, where $1\leq j\leq 5$, are the (space) frequencies of the output and input functions, then it will involve a summation
\[\sum_{\pm k_1\cdots\pm k_5=k}(\rm{expression}).\] As with Lemma \ref{newdiv}, we say a \emph{pairing} $(i,j)$ happens, if $k_i=k_j$ and the signs of $k_i$ and $k_j$ in the expression $\pm k_1\cdots\pm k_5$ are the opposite.

For each tuple $(k_j)$, we will choose an index $i\in\{1,\cdots,5\}$ as follows: if there is no pairing, then let $i\in\{1,\cdots,5\}$ be such that $|k_i|$ is the maximum; if there is a pairing, say $(1,2)$, and there is no pairing in $\{3,4,5\}$, then let $i\in\{3,4,5\}$ such that $|k_i|$ is the maximum; if there is a pairing in $\{3,4,5\}$, say $(3,4)$, then let $i=5$. It is clear that we always have $|k_i|\gtrsim|k|$.

This procedure then decomposes this quintic term into five parts; once an $i$ is fixed, and if the input function corresponding to this $i$ in this quintic term happens to be $v^+$ (instead of $w^+$), we will decompose this $v^+$ using (\ref{defvplus}), so that this quintic term is decomposed into a quintic and two septic terms.

(7) \underline{Summary}. Now we have decomposed $z$ into a superposition of multilinear expressions $z_{j\ell}$ (including those coming from step (6) above), either cubic or quintic or septic, with input functions being either $w^+$ or $v^+$. Moreover, if we consider two different $w$ and $w'$, then we may choose extensions $w^+$ and $(w')^+$ such that \[\|(w')^+-w^+\|_{X_{p,q}^{\frac{1}{2},1}}\leq 2\|w'-w\|_{X_{p,q}^{\frac{1}{2},1}(I)}.\] Let $v^+$ and $(v')^+$ be defined from $w^+$ and $(w')^+$ by (\ref{defvplus}), then we also have
\[\|(v')^+-v^+\|_{Y_p^{\frac{1}{2}}}\lesssim\|(w')^+-w^+\|_{X_{p,q}^{\frac{1}{2},1}}\lesssim \|w'-w\|_{X_{p,q}^{\frac{1}{2},1}(I)}.\] Then $z$ and $z'$, which are defined by (\ref{dnlsext}) using $w$ and $w'$, satisfy that $z-z'$ is an extension of the difference of the right hand sides of (\ref{dnls5}) corresponding to $w$ and $w'$. Therefore, in order to prove Proposition \ref{main2}, it will suffice to prove the following
\begin{prop}\label{main3} All these terms $z_{j\ell}$, including those coming from step (6) above, satisfy the multilinear estimates
\[\|z_{j\ell}(v_1,\cdots,v_r)\|_{Z_1}\lesssim\|v_1\|\cdots\|v_r\|,\] where $r\in\{3,5,7\}$, and for each $i$, $v_i$ is measured in the $Z_0$ norm if the corresponding input function in $z_{j\ell}$ is $w^+$, and in the $Y_0$ norm if the input is $v^+$. For example the estimate for $z_{42}$ will be
\[\|z_{42}(v_1,\cdots,v_5)\|_{Z_1}\lesssim \|v_1\|_{Z_0}\prod_{j=2}^5\|v_j\|_{Y_0}.\]
\end{prop}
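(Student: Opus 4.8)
The plan is to reduce every estimate in Proposition \ref{main3} to an elementary multilinear convolution inequality on the Fourier side, organized according to (i) which region $\mathbb{X}_*$, $*\in\{H,L,S,N\}$, each cubic vertex lives in, (ii) which inputs are $w$-type (to be placed in $Z_0$) versus $v$-type (in $Y_0$), and (iii) a dyadic decomposition of all spatial frequencies $\langle k_j\rangle\sim N_j$ and of the modulations. First I would peel off all kernels: the Duhamel kernel $K$ of Lemma \ref{lin} for the $\mathcal{I}$-terms, and the kernels $K_\Delta^Y$, $K_\Delta^{X,0}$, $K_\Delta^{X,+}$ of Proposition \ref{newexp} and Remark \ref{further} for the $\mathcal{E}$-terms. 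Two gains over the naive estimate of \cite{GH} must then be tracked everywhere: the rapid decay $\langle\lambda-\sigma\rangle^{-B}$ (or $\langle\lambda-\sigma\rangle^{-1}$), which after a Schur/Young step collapses the $\lambda$-integration, and the factor $\min(\langle\Delta\rangle^{-1},\langle\lambda\rangle^{-1})$ (squared, in the $\mathcal{E}_*^{X,0}$ pieces), which in the non-resonant regions equals roughly $\langle k\rangle^{-1}\langle k_1\rangle^{-1}$ by Proposition \ref{splitting}(6). Since Proposition \ref{stcut} converts a $Z_1$ bound on the untruncated $z_{j\ell}$ into a $Z_0$ bound on $\varphi_T\cdot z_{j\ell}$ with a factor $T^\theta$ (every $z_{j\ell}$ is of the form $\int_0^t(\cdots)$, hence vanishes at $t=0$), it suffices to prove the $Z_1$ estimates as stated, after which $T^\theta$ supplies the contraction constant; the Lipschitz/difference bounds follow by multilinearity together with the Lipschitz dependence $w^+\mapsto v^+$ recorded in step (7). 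As a byproduct, these bounds show all the relevant nonlinearities are well defined as spacetime distributions.

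For the genuinely cubic terms I would split by region. For $z_{21}$ and the $\mathcal{C}_H+\mathcal{C}_S$ outputs the derivative $k_1$ is absorbed outright by the semilinear inequality \eqref{semilinear}, $|k_1|(\langle k_1\rangle\langle k_2\rangle\langle k_3\rangle)^{-1/2}\lesssim\langle k\rangle^{-1/2}$, so no dispersion is needed: one distributes $\ell^{p_0}$ on the highest-frequency input and $\ell^{r_j}$ on the others and finishes by H\"older, Young and Minkowski exactly as in the proof of Proposition \ref{contnew}. For the cubic pieces of the $N$ and $L$ vertices — which survive only through the extra operators $\mathcal{E}_*^{X,0}$ and the top-$\lambda$ pieces $\mathcal{E}_*^{X,1}$, $\mathcal{E}_*^{X,2}$ appearing in $z_{51}$, $z_{55}$, $z_{56}$, $z_{59}$ and so on — the $\langle\Delta\rangle^{-1}\sim\langle k\rangle^{-1}\langle k_1\rangle^{-1}$ gain again defeats the derivative, after one also uses the divisor bound Lemma \ref{newdiv}(2) to sum over the $(k_1,k_2,k_3)$ sharing a fixed $\Delta$ at the cost of $N^\theta$; in the $\mathcal{E}_*^{X,+}$ pieces the spurious numerator $\langle\sigma-\Delta\rangle$ is harmless because the decomposition into $\mathcal{E}_*^{X,1},\mathcal{E}_*^{X,2},\mathcal{E}_*^{X,3}$ forces $|\lambda_j|\gtrsim|\sigma-\Delta|$ at the maximal index. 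It is precisely here that the para-controlled subtraction built into \eqref{dnls5} is essential: the purely cubic, purely $\mathcal{C}_N/\mathcal{C}_L$, $v$-in-all-slots contribution — the one ruled out by the \cite{GH} counterexamples — has been removed into the definition of $v=v[w]$.

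For the quintic terms there are two flavors. The single term $z_{11}$ from $\mathcal{Q}$ is the softest: $|M_5|\lesssim1$ and there is no derivative, so a direct multilinear bound with all inputs in $Y_0$ and the kernel of Lemma \ref{lin} closes it. The remaining quintics arise by substituting the ansatz $v^+=w^++\varphi_T\cdot\mathcal{E}_N^Y+\varphi_T\cdot\mathcal{E}_L^Y$ into one slot of a cubic vertex, so they carry an outer resonance modulus $\Delta$ and an inner one $\Delta'$, and Proposition \ref{quinprep} is the organizing device. In the good alternatives $|\alpha|,|\beta|,|\gamma|\lesssim1$ the combined decay $\langle\Delta\rangle^{-1/2}\langle\Delta'\rangle^{-1/2}$ (or better) dominates the product of derivative factors $|k_1||k_3|$, and one finishes with H\"older/Young/Minkowski in the remaining Fourier variables. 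The main obstacle is the residual alternatives $(1b)$ and $(2b)$ of Proposition \ref{quinprep}, where the modulation is not large enough: there one must instead exploit the rigid geometry the proposition guarantees — the near-equality $|k_1|/2\leq|k_2|\leq 2|k_1|$ with $k_1\neq k_2$ (no pairing), together with the remaining frequencies $k_3,k_4,k_5,k$ being much smaller — and invoke Lemma \ref{newdiv}(2) to count the admissible tuples by $N^\theta$, after which the leftover derivative is paid for by the $\langle k_1\rangle^{1/2}$ in the $Z_0$/$Y_0$ weight on the high input (and the large exponent $q_0=\tfrac{1}{4\delta}$ from \eqref{param}, which effectively lets the $w$-input be measured in $L^1_\lambda$, as in Proposition \ref{contnew}). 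I expect this case, which is exactly the obstruction that motivated the para-controlled construction, to be the crux of the section. Finally, the septic terms produced by step (6) fall to the same dichotomy with an extra layer of modulation decay available: the index-selection rule of step (6) guarantees the substituted input sits at the top frequency, so the two additional inputs are low and are summed off cheaply in $\ell^{r_2}$, and no new difficulty arises beyond the quintic case.
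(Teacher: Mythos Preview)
Your overall plan (peel off kernels, use Schur/Young on the modulation variable, then count frequency tuples via Lemma~\ref{newdiv}) is essentially the paper's, and the cubic cases are handled just as you say.  But on the quintics your organization blurs two distinctions that the paper keeps sharp, and this hides the actual difficulties.

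First, the paper does \emph{not} run Proposition~\ref{quinprep} on all quintics.  It first isolates a large ``canonical'' class --- $z_{11}$, $z_{2\ell}$ for $2\le\ell\le5$, $z_{3\ell}$, $z_{4\ell}$ --- for which one checks directly that the weight $|k'|\,|k_1|/\langle\Delta'\rangle\lesssim1$ (using only Proposition~\ref{splitting}), and then proves a single unified estimate (Proposition~\ref{canonical}) for that class.  The crux of that proposition is \emph{not} the divisor bound in the good case $N_3\ge N_2^{4\sqrt\delta}$, but the bad case $N_3\le N_2^{4\sqrt\delta}$, where one must know that the two top frequencies are \emph{not paired}; this is exactly what the index selection in step~(6) buys.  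Your claim that $z_{11}$ ``closes with all inputs in $Y_0$'' by a direct bound skips this: without step~(6) you cannot rule out $k_1=k_2$ with opposite signs at the top, and then $(k_1,k_2)$ is free given $(k,\Xi)$, which destroys the $\ell^{p_0}_k$ summation for $p_0\ge4$.  So step~(6) is not merely a device for producing septics --- it is what makes the canonical quintic estimate go through at all.

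Second, for the genuinely hard quintics ($z_{26}$, $z_{27}$, and the quintic $z_{5*}$), your dichotomy ``good alternative $|\alpha|,|\beta|,|\gamma|\lesssim1$ versus residual $(1b),(2b)$'' misses a piece.  For the $z_{5*}$ terms the paper first splits the inner integral according to whether $|\sigma'|\ll|\lambda'|$ or $|\sigma'|\gtrsim|\lambda'|$.  In the second case one lands on the weight $\alpha=|k_1||k_3|/\langle\Delta\rangle$ and Proposition~\ref{quinprep}(2) feeds into either Proposition~\ref{canonical} or Proposition~\ref{extra1}, roughly as you describe.  But in the first case the inner kernel collapses differently and produces the weight $\beta=|k_1||k_3|/(\langle\Delta\rangle\langle\Delta'\rangle)$ together with the much stronger output decay $\langle\lambda\rangle^{-1-10\delta}$; here Proposition~\ref{quinprep}(3) (which holds unconditionally, not as an ``either/or'') gives $|\beta|\lesssim\langle k\rangle^{-1}\langle\pm k_3\pm k_4\rangle^{-1}$ with $\langle k\rangle\gtrsim\langle k_5\rangle$, and a separate estimate (Proposition~\ref{extra2}) is needed --- one that dispenses with the Schur step entirely and sums in space directly.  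Your writeup does not anticipate this branch, and the sentence ``combined decay $\langle\Delta\rangle^{-1/2}\langle\Delta'\rangle^{-1/2}$ dominates $|k_1||k_3|$'' is not what actually happens: you get either $\langle\Delta\rangle^{-1}$ alone (case $\alpha$) or $\langle\Delta\rangle^{-1}\langle\Delta'\rangle^{-1}$ with extra $\lambda$-decay (case $\beta$), and these require different endgames.
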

\begin{rem}\label{extrarem} We make a further remark about the operation in step (6) above. For some quintic terms $z_{j\ell}$ this operation is necessary; for others it is not. However, even in the latter case, performing this operation will not affect the proof: if $z_{j\ell}$ itself satisfies a multilinear estimate where this input function $v^+$ is measured in the $Y_0$ norm, then by Propositions \ref{stcut} and \ref{contnew}, after decomposing this $v^+$ using (\ref{defvplus}), the resulting quintic and septic terms will also satisfy the right multilinear estimate. For example, we will see below that
\[\|z_{26}(v_1,\cdots,v_5)\|_{Z_1}\lesssim\prod_{j=1}^4\|v_j\|_{Z_0}\cdot\|v_5\|_{Y_0}.\] Then, even after performing this operation (with the chosen index $i=5$) we still have
\[
\begin{split}\|z_{26}(v_1,\cdots,v_5)\|_{Z_1}&\lesssim\prod_{j=1}^4\|v_j\|_{Z_0}\cdot\|v_5\|_{Y_0},\\
\|z_{26}(v_1,\cdots,v_4,\varphi_T\cdot\mathcal{E}_N^Y(v_5,v_6,v_7))\|_{Z_1}&\lesssim\prod_{j=1}^6\|v_j\|_{Z_0}\cdot\|v_7\|_{Y_0},\\
\|z_{26}(v_1,\cdots,v_4,\varphi_T\cdot\mathcal{E}_N^Y(v_5,v_6,v_7))\|_{Z_1}&\lesssim\prod_{j=1}^5\|v_j\|_{Z_0}\cdot\|v_6\|_{Y_0}\|v_7\|_{Y_0}.
\end{split}\]
\end{rem} The following subsections are devoted to the proof of Proposition \ref{main3}.
\subsection{Cubic terms} In this subsection we treat the cubic terms, which are $z_{21}$ and the cubic $z_{5*}$ terms. First we deal with $z_{21}$ term in the following Proposition \ref{z21}.
\begin{prop}\label{z21}
 $z_{21}$ is defined in (\ref{defz2}). We have the following bound
\begin{equation}\label{z21:bound}
\|z_{21}(v_1, v_2, v_3)\|_{Z_1}\lesssim \prod_{j=1}^3 \|v_j\|_{Z_0}.
\end{equation}
\end{prop}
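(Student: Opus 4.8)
The plan is to estimate $z_{21}=\mathcal{I}(\mathcal{C}_H+\mathcal{C}_S)(w,w,w)$ in the $Z_1=X_{p_0,q_0}^{\frac12,b_1}$ norm by exploiting the \emph{semilinear} gain from Proposition \ref{splitting}(5), namely $|k_1|\cdot(\langle k_1\rangle\langle k_2\rangle\langle k_3\rangle)^{-1/2}\lesssim\langle k\rangle^{-1/2}$, which is precisely what compensates the derivative $k_1$ appearing in the formula (\ref{cubpart}) for $\mathcal{C}_*$. Because of this pointwise gain, no divisor counting or resonance structure is needed here; the estimate should follow from the standard $X^{s,b}$ machinery once we pass from $\mathcal{I}$ to the kernel representation (\ref{linfor2})--(\ref{esti}) of Lemma \ref{lin}. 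First I would write out $\mathscr{X}z_{21}(k,\lambda)$ using (\ref{linfor2}) and the Fourier formula (\ref{cubpart}), bounding $|K(\lambda,\sigma)|$ by $\langle\lambda-\sigma\rangle^{-B}\langle\sigma\rangle^{-1}+\langle\lambda\rangle^{-B}\langle\sigma\rangle^{-1}$ and using $|M_3|\lesssim 1$; this reduces matters to controlling, for fixed $k$,
\[
\Big\|\langle\lambda\rangle^{b_1}\int_{\mathbb R}\Big(\tfrac{1}{\langle\lambda-\sigma\rangle^B}+\tfrac{1}{\langle\lambda\rangle^B}\Big)\tfrac{1}{\langle\sigma\rangle}\, G(k,\sigma)\,\mathrm{d}\sigma\Big\|_{L^{q_0}_\lambda},
\]
where $G(k,\sigma)=\sum_{\mathbb{X}_H\cup\mathbb{X}_S}|k_1|\,\big|\overline{\widehat{v_1}}\,\widehat{v_2}\,\widehat{v_3}\big|$ is the (Fourier transform in $t$ only, at frequency $k$, of the) nonlinearity evaluated on the convolution variable $\sigma=\lambda_1+\lambda_2+\lambda_3$ (with the twisting $\sigma\mapsto\sigma-\Delta$ absorbed). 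Since $b_1=1-\delta<1$, the $\langle\sigma\rangle^{-1}$ weight beats $\langle\lambda\rangle^{b_1}$ after moving it across the kernel via $\langle\lambda\rangle^{b_1}\lesssim\langle\lambda-\sigma\rangle^{b_1}\langle\sigma\rangle^{b_1}$ and the rapid decay of $\langle\lambda-\sigma\rangle^{-B}$, leaving an $L^1_\sigma\to L^{q_0}_\lambda$ convolution (Young/Schur) that costs $O(1)$; so it suffices to prove a \emph{fixed-time} trilinear bound for $G$.

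The core is then the spatial trilinear estimate: with $k_2+k_3-k_1=k$ on $\mathbb{X}_H\cup\mathbb{X}_S$, and using (\ref{semilinear}) to replace $|k_1|$ by $\langle k\rangle^{1/2}(\langle k_1\rangle\langle k_2\rangle\langle k_3\rangle)^{1/2}$, we must show
\[
\Big\|\langle k\rangle^{1/2}\cdot\langle k\rangle^{1/2}\sum_{\mathbb{X}_H\cup\mathbb{X}_S}(\langle k_1\rangle\langle k_2\rangle\langle k_3\rangle)^{1/2}\,|\widehat{v_1}(k_1)\widehat{v_2}(k_2)\widehat{v_3}(k_3)|\Big\|_{\ell^{p_0}_k}\lesssim\prod_{j=1}^3\|\langle k_j\rangle^{1/2}\widehat{v_j}\|_{\ell^{p_0}_{k_j}},
\]
i.e. absorbing one $\langle k\rangle^{1/2}$ into the output $\frac12$-norm we need $\|\langle k\rangle^{1/2}\,(g_1*g_2*g_3)\|_{\ell^{p_0}}\lesssim\prod\|g_j\|_{\ell^{p_0}}$ where $g_j(k_j)=\langle k_j\rangle^{1/2}|\widehat{v_j}(k_j)|$ and $\ast$ is the additive convolution twisted by the sign pattern $k=k_2+k_3-k_1$. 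On $\mathbb{X}_H$ we have $|k_2|\ge|k_3|\ge 2^{-20}|k|$ by Proposition \ref{splitting}(1), so $\langle k\rangle^{1/2}\lesssim\langle k_2\rangle^{1/2}$ and $\langle k\rangle^{1/2}\lesssim\langle k_3\rangle^{1/2}$, which means the two extra $\langle k\rangle^{1/2}$ factors can both be hidden inside the $\langle k_2\rangle^{1/2},\langle k_3\rangle^{1/2}$ already available; what remains is $\|g_1*|g_2|*|g_3|\|_{\ell^{p_0}}$ with two of the three inputs carried in $\ell^1$ by Hausdorff--Young/Young's inequality (one keeps an $\ell^{p_0}$, the other two go to $\ell^1\subset$-controlled-by-$\ell^{p_0}$ at the cost of a harmless summable weight, exactly as in (\ref{neg})). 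On $\mathbb{X}_S$, Proposition \ref{splitting}(3) gives $|k|/2\le|k_2|\le 2|k|$ and $|k_3|\ge 2^{-10}|k_1|$ with $|k|\ge 2^{20}|k_3|$, so $\langle k\rangle\sim\langle k_2\rangle$ and $\langle k_1\rangle\lesssim\langle k_3\rangle$; again both stray $\langle k\rangle^{1/2}$'s are controlled by $\langle k_2\rangle^{1/2}$, and one runs the same Young-type argument, now summing the low frequency $k_3$ and the comparable $k_1$.

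The main obstacle I anticipate is purely bookkeeping: the convolution in $k$ is over the restricted sets $\mathbb{X}_H,\mathbb{X}_S$ rather than all of $\mathbb{Z}^3$, and one must verify that restricting the sum only helps (it does, since the summand is nonnegative after taking absolute values), and that the weight redistributions $\langle k\rangle^{1/2}\lesssim\langle k_2\rangle^{1/2}$ etc. leave enough decay on the \emph{low} frequencies so that the $\ell^1$ norms in Young's inequality converge — this is where a small loss like $N_2^\theta$ from an $\ell^{r_2}$ vs $\ell^{p_0}$ interpolation might be needed, exactly mirroring the mechanism in (\ref{newadd})--(\ref{neg}); since there is a genuine power gain $\langle k\rangle^{-\theta}$ available (the semilinear bound (\ref{semilinear}) has room to spare when one is not at the extremal frequency configuration), this loss is affordable. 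Once the fixed-time estimate is in hand, combining it with the kernel bound from Lemma \ref{lin} and with Proposition \ref{stcut} (to recover the $T^\theta$ smallness, although for $z_{21}$ the $Z_1$ bound alone is what Proposition \ref{main3} asks for) completes the proof of (\ref{z21:bound}).
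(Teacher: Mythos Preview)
Your accounting of the weights is off, and more importantly the ``fixed-time'' strategy you outline cannot close. After applying the semilinear bound (\ref{semilinear}), the factor $\langle k\rangle^{-1/2}$ exactly cancels the $\langle k\rangle^{1/2}$ coming from the $Z_1$ norm; there is no second $\langle k\rangle^{1/2}$ to distribute. What you are then left with, in your fixed-time scheme, is precisely the convolution estimate
\[
\|g_1*g_2*g_3\|_{\ell_k^{p_0}}\lesssim\prod_{j=1}^3\|g_j\|_{\ell_k^{p_0}},\qquad g_j(k_j)=\langle k_j\rangle^{1/2}|\widehat{v_j}(k_j)|,
\]
which is false for $p_0>1$ (take $g_j=\mathbf{1}_{[0,N]}$). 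Young's inequality requires two of the three inputs in $\ell^1$, and passing from $\ell^{p_0}$ to $\ell^1$ costs a full $\langle k_j\rangle^{1-1/p_0}$, while the $Z_0$ norm only supplies $\langle k_j\rangle^{1/2}$; for $p_0\geq 4$ this is a genuine loss of $\langle k_j\rangle^{1/4}$ or more on each such input, which nothing in (\ref{semilinear}) can absorb. The semilinear bound is sharp on $\mathbb{X}_H$ when $|k|\sim|k_1|\sim|k_2|\sim|k_3|$, so there is no hidden $\langle k\rangle^{-\theta}$ slack.

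The paper's proof does \emph{not} reduce to a fixed-time estimate. Instead it keeps the $\Delta=2(k-k_2)(k-k_3)$ variable in play: after fixing $(\lambda_1,\lambda_2,\lambda_3)$ (retaining a small $\langle\lambda_j\rangle^{\delta}$ weight from each), one uses the resonance identity $\langle\lambda\rangle\langle\lambda-\Delta-\mu\rangle\prod_j\langle\lambda_j\rangle\gtrsim\langle\Delta\rangle$ to manufacture a factor $\langle\Delta\rangle^{-\delta}$. Then, crucially, the standard divisor bound shows that for fixed $(k,\Delta)$ the number of admissible $(k_1,k_2,k_3)$ is $O(\langle\Delta\rangle^{\theta})$; this collapses the triple sum to an $\ell_k^{p_0}\ell_\Delta^{p_0}$ norm of $\prod f_j$, which \emph{does} equal $\prod\|f_j\|_{\ell^{p_0}}$. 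A Schur estimate in $(\lambda,\Delta)$ with kernel $\langle\lambda-\Delta-\mu\rangle^{-(1-\delta)}\langle\Delta\rangle^{-\delta}$ then handles the $L_\lambda^{q_0}$ norm. So the divisor counting you dismissed is exactly the missing ingredient: it replaces the failing cubic $\ell^{p_0}$-convolution by a two-parameter sum in $(k,\Delta)$ where the norms do factor.
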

\begin{proof}
Let $*\in \{H, S\}$, we need to show the following bound
\begin{equation}
\|\mathcal{I}\mathcal{C}_*(v_1, v_2, v_3)\|_{Z_1}\lesssim  \prod_{j=1}^3 \|v_j\|_{Z_0}.
\end{equation}
We may assume the norms on the right hand side are all equal to 1.
Recall from (\ref{esti}) and (\ref{linfor2}) that for any $B>0$,
\begin{equation}
|\widetilde{\mathcal{I}\mathcal{C}}_*(v_1, v_2, v_3)(k,\lambda)|\lesssim\sum_{\mathbb{X}_*} |k_1| \int_{\lambda_2+\lambda_3-\lambda_1=\sigma-\Delta}\left( \frac{1}{\left\langle \lambda\right\rangle^B}+\frac{1}{\left\langle \lambda-\sigma\right\rangle^B} \right)\frac{1}{\langle\sigma\rangle}\prod_{j=1}^3 |\widetilde{v_j}(k_j, \lambda_j)|,
\end{equation}
where $\Delta = 2(k-k_2)(k-k_3)$ as before.

It will suffice to prove that
\begin{equation}\label{z21:bound1}
\left\|
\langle k \rangle^{\frac{1}{2}}  \langle\lambda\rangle^{b_1} \sum_{\mathbb{X}_*}  |k_1| \int_{\lambda_2+\lambda_3-\lambda_1=\sigma-\Delta} \frac{1}{\left\langle \lambda\right\rangle\langle\lambda-\sigma\rangle} \prod_{j=1}^3 |\widetilde{v_j}(k_j, \lambda_j)|
\right\|_{\ell^{p_0}_k L_\lambda^{q_0}}\lesssim 1
\end{equation} 
by the definition of $Y_1$ norm (\ref{defspaces}) and the following inequality
\begin{equation}
\left( \frac{1}{\left\langle \lambda\right\rangle^B}+\frac{1}{\left\langle \lambda-\sigma\right\rangle^B} \right)\frac{1}{\langle\sigma\rangle}\lesssim \frac{1}{\langle\lambda\rangle\langle\lambda-\sigma\rangle}
\end{equation}
for $B$ large enough.

Recall that $b_0  = 1-2\delta$ and $1/q_0=4\delta$ and hence similar to the proof of Proposition \ref{contnew} we have  for $j \in \{1, 2, 3\}$
\[\|\langle k_j\rangle^{\frac{1}{2}}\langle\lambda_j\rangle^{\delta}\widetilde{v_j}\|_{L_{\lambda}^1\ell_k^{p_0}}\lesssim\|\langle k_j\rangle^{\frac{1}{2}}\langle\lambda_j\rangle^{b_0}\widetilde{v_j}\|_{L_{\lambda}^{q_0}\ell_k^{p_0}}\lesssim\|\langle k_j\rangle^{\frac{1}{2}}\langle\lambda_j\rangle^{b_0}\widetilde{v_j}\|_{\ell_k^{p_0}L_{\lambda}^{q_0}}\lesssim 1\]by H\"{o}lder and Minkowski, we may then fix $(\lambda_1, \lambda_2, \lambda_3)$ which we eventually integrate over, and denote
\[
|\widetilde{v_j}(k_j, \lambda_j)| = \langle k_j\rangle^{-\frac{1}{2}}\langle\lambda_j\rangle^{-\delta} f_j(k_j) \,\, (1\leq j\leq 3)
\]
 and it will suffice to prove that  for any fixed  $\mu (=\lambda_2+\lambda_3-\lambda_1)\in \mathbb{R}$,
\begin{equation}\label{z21:bound2}
\left\|
\frac{\langle k \rangle^{\frac{1}{2}}}{\left\langle \lambda\right\rangle^{\delta}} \sum_{\mathbb{X}_*} \frac{ | k_1|}{\langle k_1\rangle^{\frac{1}{2}}\langle k_2\rangle^{\frac{1}{2}} \langle k_3\rangle^{\frac{1}{2}}}  \frac{1}{\langle\lambda-\Delta-\mu\rangle \langle\lambda_1\rangle^{\delta}\langle\lambda_2\rangle^{\delta}\langle\lambda_3\rangle^{\delta}}\prod_{j=1}^3 |f_j| 
\right\|_{\ell^{p_0}_k L_\lambda^{q_0}}\lesssim 1,
\end{equation} 
and then applying the inequality (\ref{semilinear}), it will suffice to prove 
\begin{equation}\label{z21:bound3}
\left\|
\frac{1}{\left\langle \lambda\right\rangle^{\delta}} \sum_{\mathbb{X}_*} \frac{1}{\langle\lambda-\Delta-\mu\rangle \langle\lambda_1\rangle^{\delta}\langle\lambda_2\rangle^{\delta}\langle\lambda_3\rangle^{\delta}}\prod_{j=1}^3 |f_j| 
\right\|_{\ell^{p_0}_k L_\lambda^{q_0}}\lesssim 1,
\end{equation} 
In the above summation over $(k_1, k_2, k_3)\in \mathbb{X}_*$, we again first fix $\Delta$ and sum over $(k_1, k_2, k_3)\in \mathbb{X}_*$ corresponds to this fixed $\Delta$. Using the fact that
\begin{equation}
\left(\langle\lambda_1\rangle\langle\lambda_3\rangle\langle\lambda_3\rangle\langle\lambda\rangle
\langle\lambda-\Delta-\mu\rangle\right)^{\delta} \gtrsim \langle\Delta\rangle^{\delta}
\end{equation}
and by the standard divisor bound\footnote{The divisor bound applies when $\Delta\neq 0$; however when $\Delta=0$ we must have $k=k_1=k_2=k_3$ by the definition of $\mathbb{V}_3$, so the bound is still true.}, we can bound the left side of (\ref{z21:bound3}) by
\begin{equation}\label{z21:bound4}
\left\|
\sum_{\Delta}  \frac{1}{\langle\lambda-\Delta-\mu\rangle^{1-\delta} \langle\Delta\rangle^\delta} F (k, \Delta)
\right\|_{\ell^{p_0}_k L_\lambda^{q_0}}
\end{equation}
where
\begin{equation}\label{z21:bound5}
F(k, \Delta) =\sum_{\substack{(k_1,k_2,k_3)\in\mathbb{X}_*\\2(k-k_2)(k-k_3)=\Delta}} \prod_{j=1}^3 |f_j|\lesssim \langle \Delta \rangle^{\theta} \left(\sum_{\substack{(k_1,k_2,k_3)\in\mathbb{X}_*\\2(k-k_2)(k-k_3)=\Delta}} \prod_{j=1}^3 |f_j|^{p_0}\right)^{\frac{1}{p_0}}.
\end{equation}
By our choice we have $\delta<\frac{1}{5p_0}$ and $\theta<\delta$, so by Schur's estimate, we can bound
\begin{equation}\label{z21:bound6}
\left\|
\sum_{\Delta}  \frac{1}{\langle\lambda-\Delta-\mu\rangle^{1-\delta} \langle\Delta\rangle^\delta} F (k, \Delta)
\right\|_{L_\lambda^{q_0}}\lesssim \left\|\frac{F(k, \Delta)}{\langle\Delta\rangle^{\delta}}\right\|_{\ell^{p_0}_\Delta}.
\end{equation}
Then we may sum over $k$ and we obtain that 
\begin{equation}\label{z21:bound7}
\left\|
\sum_{\Delta}  \frac{1}{\langle\lambda-\Delta-\mu\rangle^{1-\delta} \langle\Delta\rangle^\delta} F (k, \Delta)
\right\|_{\ell^{p_0}_k L_\lambda^{q_0}}\lesssim \prod_{j=1}^3 \|f_j\|_{\ell^{p_0}_k}
\end{equation}
by (\ref{z21:bound5}) and (\ref{z21:bound6}). Finally we integrate (\ref{z21:bound7}) over $(\lambda_1, \lambda_2, \lambda_3)$ and it finishes this proof.
\end{proof}

Next let's consider the cubic $z_{5*}$ terms (i.e. $z_{51}$, $z_{52}$, $z_{55}$, $z_{56}$ and $z_{59}$). The following Proposition \ref{z5} gives the suitable bounds for $z_{51}$, $z_{52}$, $z_{55}$, $z_{56}$ and $z_{59}$ in Proposition \ref{main3}.
\begin{prop}\label{z5}
For $*\in \{N, L\}$, $j\in \{0, 1, 2, 3\}$ and $\mathcal{E}_{*}^{X,j}$ defined in (\ref{formforxy}) and the description above (\ref{defz5}), we have the following bounds.

(1) If $j = 0$, we obtain that 
\begin{equation}\label{z5:X0}
\left\|\mathcal{E}_{*}^{X,0}(v_1, v_2, v_3)\right\|_{Z_1}\lesssim \|v_1\|_{Z_0}\|v_2\|_{Y_0} \|v_3\|_{Y_0}.
\end{equation}

(2) If $j = 1$, we obtain that 
\begin{equation}\label{z5:X1}
\left\|\mathcal{E}_{*}^{X,1}(v_1, v_2, v_3)\right\|_{Z_1}\lesssim \|v_1\|_{Z_0}\|v_2\|_{Y_0} \|v_3\|_{Y_0}.
\end{equation}

(3) If $j \in \{2, 3\}$ and $i\in\{2, 3\}-\{j\}$, we obtain that
\begin{equation}\label{z5:X23}
\left\|\mathcal{E}_{*}^{X,j}(v_1, v_2, v_3)\right\|_{Z_1}\lesssim \|v_1\|_{Z_0}\|v_j\|_{Z_0} \|v_i\|_{Y_0}
\end{equation}
 \end{prop}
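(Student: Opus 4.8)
The plan is to follow the template already used in the proofs of Propositions~\ref{contnew} and~\ref{z21}. Fix $*\in\{N,L\}$ and one of the indices $j\in\{0,1,2,3\}$, normalize the relevant $Z_0$/$Y_0$ norms of $v_1,v_2,v_3$ to $1$, and reduce the claimed $Z_1$ bound to an $\ell^{p_0}_kL^{q_0}_\lambda$ estimate on the spacetime Fourier side. Starting from \eqref{expnew} and the kernel bounds \eqref{k0bd}, \eqref{k+bd} of Remark~\ref{further}, one has
\[
\big|\widetilde{\mathcal{E}_*^{X,j}}(v_1,v_2,v_3)(k,\lambda)\big|\lesssim\sum_{\mathbb{X}_*}|k_1|\int_{\lambda_2+\lambda_3-\lambda_1=\sigma-\Delta}\big|K_\Delta^{X,j}(\lambda,\sigma)\big|\prod_{i=1}^3|\widetilde{v_i}(k_i,\lambda_i)|\,\mathrm{d}\sigma,
\]
where $K_\Delta^{X,j}$ is the relevant piece of $K_\Delta^{X,0}$ or $K_\Delta^{X,+}$ and $\sigma-\Delta=\mu:=\lambda_2+\lambda_3-\lambda_1$. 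The two structural facts that drive the estimate for $*\in\{N,L\}$ are: (i) the non-resonance bound $|\Delta|\sim\langle k\rangle\langle k_1\rangle$ of Proposition~\ref{splitting}(6), which lets the derivative loss $|k_1|$ be paid for by a genuine inverse power of $\langle\Delta\rangle$ once the output weight $\langle k\rangle^{\frac12}$ and the input weights $\langle k_i\rangle^{-\frac12}$ are taken into account, leaving a spare $\langle\Delta\rangle^{-\frac12}$; and (ii) the divisor bound — Lemma~\ref{newdiv}(2), or the classical one since $\Delta=2(k-k_2)(k-k_3)$ — which allows one to first fix $\Delta$, sum over the $(k_1,k_2,k_3)\in\mathbb{X}_*$ with that value of $\Delta$ at a cost $\langle\Delta\rangle^\theta$, and then sum in $k$. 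After freezing $(\lambda_1,\lambda_2,\lambda_3)$ and writing each $|\widetilde{v_i}(k_i,\lambda_i)|$ as a normalized $\ell^{p_0}_k$ (resp.\ $\ell^{r_2}_k$, in the $*=L$ case for the small frequencies) profile times the appropriate temporal weight, exactly as in \eqref{newadd}--\eqref{neg}, the $\sigma$- and $\Delta$-summations are disposed of by Schur's test against $\langle\lambda-\sigma\rangle^{-1}$, $\langle\lambda\rangle^{-B}$, $\langle\lambda-\sigma\rangle^{-B}$ and the leftover fractional powers of $\langle\Delta\rangle^{-1}$ and $\langle\lambda\rangle^{-1}$ (splitting $\min(\langle\Delta\rangle^{-1},\langle\lambda\rangle^{-1})$, resp.\ its square, into such powers as needed); the residual $k$-sum factors by Minkowski, and integrating back in $(\lambda_1,\lambda_2,\lambda_3)$ closes everything.

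For $j=0$ one uses \eqref{k0bd}: the first term is harmless thanks to $\langle\lambda\rangle^{-B}$, and the second term carries $\min(\langle\Delta\rangle^{-1},\langle\lambda\rangle^{-1})^2$, a \emph{full} inverse power of both $\langle\Delta\rangle$ and $\langle\lambda\rangle$; spending one of them on $|k_1|$ via (i) leaves a clean $\langle\lambda\rangle^{-1}$, which dominates the $\langle\lambda\rangle^{b_1}=\langle\lambda\rangle^{1-\delta}$ weight of $Z_1$, so $v_2,v_3$ may stay in $Y_0$. For $j\in\{1,2,3\}$ one uses \eqref{k+bd}; the genuinely new feature is the numerator factor $\langle\sigma-\Delta\rangle=\langle\mu\rangle$, which is controlled by the very definition of the decomposition into $\mathcal{E}_*^{X,1},\mathcal{E}_*^{X,2},\mathcal{E}_*^{X,3}$: on the support of $\mathcal{E}_*^{X,j}$ one has $|\lambda_j|=\max_\ell|\lambda_\ell|\gtrsim|\sigma-\Delta|$, hence $\langle\sigma-\Delta\rangle\lesssim\langle\lambda_j\rangle$, and this is absorbed into the temporal weight attached to $v_j$ (using, in the middle term of \eqref{k+bd} which has only one power of the min, the slack $b_1<1$ to convert the $\langle\sigma\rangle^{-1}$ into $\langle\sigma\rangle^{-\delta}$ after pairing $\langle\lambda-\sigma\rangle^{-B}$ forces $\langle\lambda\rangle\sim\langle\sigma\rangle$). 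This is precisely why, for $j=1$ where $v_1$ is measured in $Z_0$ and carries the nearly sharp weight $\langle\lambda_1\rangle^{b_0}=\langle\lambda_1\rangle^{1-2\delta}$, one can still put $v_2,v_3$ in $Y_0$; whereas for $j\in\{2,3\}$, where $v_j$ only carries the $Y_0$ weight $\langle\lambda_j\rangle^{\frac12}$, one needs the $\lambda$-largest input $v_j$ itself to be in $Z_0$ — which is exactly the hypothesis that the one placed in $Y_0$ is the \emph{other} index $i\in\{2,3\}-\{j\}$.

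The main obstacle will be the simultaneous accounting in the case $*=L$ (the harder one, as in Proposition~\ref{contnew}): there $\langle k\rangle\sim\langle k_1\rangle$ while $k_2,k_3$ are comparatively small, so the gain in (i) is only $\langle\Delta\rangle^{-1}\sim\langle k\rangle^{-2}$, and one must verify that after paying for $|k_1|$, absorbing $\langle\sigma-\Delta\rangle$ into $\langle\lambda_j\rangle$, and conceding the divisor loss $\langle\Delta\rangle^\theta$, there still remains a positive power of $\langle k\rangle$ (equivalently of the dyadic size $N_2$ of $k_2$) to both sum the dyadic pieces and beat the $\ell^{p_0}_k$-summation against the small-frequency profiles $f_2,f_3$, arranged exactly as in \eqref{newadd}--\eqref{neg} with $\delta<1/(5p_0)$ and $\theta<\delta$; the case $*=N$ is then handled by the same argument, using in addition $\langle k\rangle^{\frac12}\langle k_1\rangle^{\frac12}\sim\langle\Delta\rangle^{\frac12}$ and the dyadic smallness $N_2\gtrsim\max(|k|,|k_1|)$ on $\mathbb{X}_N$ to sum the extra dyadic parameters. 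Once the worst term of \eqref{k+bd} (the last one, with only $\langle\lambda-\sigma\rangle^{-1}$ decay) is treated in this way, the remaining terms — and the case $j=0$ — are strictly easier.
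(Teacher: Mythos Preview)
Your sketch is correct and follows essentially the same route as the paper: use $|\Delta|\sim\langle k\rangle\langle k_1\rangle$ to absorb the derivative, the divisor bound to collapse the $(k_1,k_2,k_3)$-sum for fixed $\Delta$, and for $j\in\{1,2,3\}$ exploit $|\lambda_j|\gtrsim|\sigma-\Delta|$ to kill the numerator $\langle\sigma-\Delta\rangle$ in \eqref{k+bd}, which forces the input at position $j$ into $Z_0$. The paper packages the $j\geq 1$ argument more cleanly by first proving the single pointwise inequality
\[
\frac{\langle\lambda\rangle^{b_1}}{\langle\lambda_j\rangle^{b_0-\delta}}\,|K_\Delta^{X,+}(\lambda,\sigma)|\lesssim\frac{1}{\langle\Delta\rangle^{1-6\delta}\langle\lambda-\sigma\rangle}
\]
valid on the region $|\lambda_j|\gtrsim|\sigma-\Delta|$, and then running a single Schur step $L_\lambda^{q_0}\leftarrow L_{\lambda_j}^{q_1}$; but this is exactly what your term-by-term treatment of \eqref{k+bd} amounts to once the exponents are tracked (in particular one needs the asymmetric split $\min^2\leq\langle\Delta\rangle^{-(1-c\delta)}\langle\lambda\rangle^{-(1+c\delta)}$ so that $\langle\lambda\rangle^{b_1-(1+c\delta)}\in L_\lambda^{q_0}$, which your parenthetical ``splitting into such powers as needed'' covers).
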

\begin{proof}
Recall from (\ref{formforxy}) that
\begin{equation}
|\widetilde{\mathcal{E}_{*}^{X,0}}(v_1, v_2, v_3)(k,\lambda)|\lesssim
\sum_{\mathbb{X}_*} |k_1| \int_{\lambda_2+\lambda_3-\lambda_1=\sigma-\Delta} |K_\Delta^{X, 0}(\lambda,\sigma)| \prod_{j=1}^3 |\widetilde{v_j}(k_j, \lambda_j)|,
\end{equation}
and for $j\in\{1, 2, 3\}$
\begin{equation}
|\widetilde{\mathcal{E}_{*}^{X,j}}(v_1, v_2, v_3)(k,\lambda)|\lesssim
\sum_{\mathbb{X}_*} |k_1| \int_{\substack{\lambda_2+\lambda_3-\lambda_1=\sigma-\Delta\\|\lambda_j|=\max_{1\leq \ell\leq 3}|\lambda_\ell|}} |K_\Delta^{X, +}(\lambda,\sigma)| \prod_{j=1}^3 |\widetilde{v_j}(k_j, \lambda_j)|,
\end{equation}
where $\Delta = 2(k-k_2)(k-k_3)$ as before. 

(1) Let's consider the case when $j = 0$ and $*\in \{N, L\}$, and then left side of the bound (\ref{z5:X0}) can be bounded by
\begin{equation}\label{z5:norm0}
\left\|
\langle k \rangle^{\frac{1}{2}} \langle \lambda\rangle^{b_1} \sum_{\mathbb{X}_*}  |k_1| \int_{\lambda_2+\lambda_3-\lambda_1=\sigma-\Delta} |K_\Delta^{X, 0}(\lambda,\sigma)| \prod_{j=1}^3 |\widetilde{v_j}(k_j, \lambda_j)|
\right\|_{\ell^{p_0}_k L_\lambda^{q_0}}.
\end{equation}
Recall (\ref{k0bd}), it will suffice to prove that
\begin{equation}\label{z5:norm01}
\left\|
\langle k \rangle^{\frac{1}{2}} \langle \lambda\rangle^{b_1} \sum_{\mathbb{X}_*}  |k_1| \int_{\lambda_1, \lambda_2, \lambda_3} \frac{1}{\langle\lambda\rangle^{1+4\delta}\langle\Delta\rangle^{1-4\delta}}
 \prod_{j=1}^3 |\widetilde{v_j}(k_j, \lambda_j)|
\right\|_{\ell^{p_0}_k L_\lambda^{q_0}}\lesssim \|v_1\|_{Z_0}\|v_2\|_{Y_0} \|v_3\|_{Y_0}
\end{equation} 
and then by Minkowski's inequality and integrating over $\lambda$ the left side of (\ref{z5:norm01}) can bounded by
\begin{equation}
\left\|
\langle k \rangle^{\frac{1}{2}} \sum_{\mathbb{X}_*}  |k_1| \int_{\lambda_1, \lambda_2, \lambda_3} \frac{1}{\langle\Delta\rangle^{1-4\delta}}
 \prod_{j=1}^3 |\widetilde{v_j}(k_j, \lambda_j)|
\right\|_{\ell^{p_0}_k}.
\end{equation}
We may then fix $(\lambda_1,\lambda_2,\lambda_3)$ which we eventually integrate over.
 In the above summation over $(k_1, k_2, k_3)\in \mathbb{X}_*$, we may first fix $\Delta$ and sum over $(k_1, k_2, k_3)\in \mathbb{X}_*$ that corresponds to this fixed $\Delta$. Moreover, as before we may restrict to the dyadic region $\langle k_3\rangle\sim N_2$ and $\langle k_2 \rangle\sim N_3$ (so $N_2\gtrsim N_3$), where $N_2$ and $N_3$ are dyadic numbers. It will suffice to bound   
\begin{equation}\label{z5:bound01}
\left\|
\langle k \rangle^{\frac{1}{2}} \sum_{\Delta} |k_1|  \frac{1}{\langle\Delta\rangle^{1-4\delta}}
  \sum_{\substack{(k_1,k_2,k_3)\in\mathbb{X}_*\\(k-k_2)(k-k_3)=\Delta\\|k_2|\sim N_2, |k_3|\sim N_3}} \prod_{j=1}^3|\widetilde{v_j}(k_j, \lambda_j)|
\right\|_{\ell^{p_0}_k}
\end{equation}
where $\langle\Delta\rangle\sim \langle k\rangle\langle k_1\rangle$ and $N_2\sim|k_2|\gtrsim \max (|k|, |k_1|)$ (by Proposition \ref{splitting}). By the standard divisor bound and H\"older's inequality as the proof of Proposition \ref{contnew} we obtain that
 \begin{equation}\label{z5:bound02}
 (\ref{z5:bound01})\lesssim \frac{\|\langle k_1\rangle^{\frac{1}{2}}\widetilde{v_1}\|_{\ell^{p_0}_k}\|\langle k_2\rangle^\frac{1}{2p_0}\widetilde{v_2}\|_{\ell^{r_2}_k}\|\langle k_3\rangle^{\frac{1}{2p_0}}\widetilde{v_3}\|_{\ell^{r_2}_k}}{N_2^{\frac{1}{2p_0}-2\delta-\theta} N_3^{\frac{1}{2p_0}}}. 
 \end{equation}
Then we may integrate over $\lambda_1$ $\lambda_2$ and $\lambda_3$ and sum over $(N_2, N_3)$.  By using the negative power of $N_2$ (suppose $\delta< {1}/{(4p_0)}$) and the following facts (similar as before):
\begin{equation}\label{z5:normalize}
\|\langle k_1\rangle^{\frac{1}{2}}\widetilde{v_1}\|_{L_\lambda^1 \ell^{p_0}_k}\lesssim \|v_1\|_{Z_0},\quad
\|\langle k_2\rangle^\frac{1}{2p_0}\widetilde{v_2}\|_{L_\lambda^1 \ell^{r_2}_k}\lesssim \|v_2\|_{Y_0},\quad
\|\langle k_3\rangle^\frac{1}{2p_0}\widetilde{v_3}\|_{L_\lambda^1\ell^{r_2}_k}\lesssim \|v_3\|_{Y_0},
\end{equation}
this finishes the proof of (\ref{z5:X0}).

Before we start to prove the parts (2) and (3), we may first hold a easier bound for $|K_\Delta^{X,+}|$.
Suppose $|\lambda_j|=\max_{1\leq \ell\leq 3}$ and $|\lambda_j|\gtrsim |\sigma-\Delta|$. Recall that $b_0 = 1-2\delta$ and $b_1 = 1-\delta$, and then we obtain that 
\begin{equation}\label{z5:KX+}
\frac{\langle\lambda\rangle^{b_1}}{\langle\lambda_j\rangle^{b_0-\delta}}|K_\Delta^{X,+}|\lesssim \frac{1}{\langle \Delta\rangle^{1-6\delta} \langle\sigma-\lambda\rangle}.
\end{equation}
We may then fix the other two $\lambda_\ell$ ($\ell\neq j$) and $\Delta$, and then we integrate over $\lambda_j$ and $\lambda$. We can obtained the following bound:
\begin{equation}\label{z5:bound11}
\left\|
\int_{\lambda_j} \frac{1}{\langle\lambda_1+\lambda-\lambda_2-\lambda_3-\Delta\rangle} 
\left(\langle\lambda_j\rangle^{b_1} \widetilde{v_j}(k_j, \lambda_j)\right)
\right\|_{L_\lambda^{q_0}}\lesssim \left\|\langle\lambda_j\rangle^{b_0-\delta} \widetilde{v_j}(k_j, \lambda_j)\right\|_{L^{q_1}_{\lambda_j}}
\end{equation}
by Schur's inequalities.  For $|\lambda_j| =\max_{\ell\in\{1,2,3\}} |\lambda_\ell|$, to prove the parts (2) and (3), it will suffice to consider the norm 
\begin{equation}\label{z5:norm1}
\left\|
\langle k \rangle^{\frac{1}{2}}  \sum_{\mathbb{X}_*}  |k_1| \frac{1}{\langle\Delta\rangle^{1-6\delta}} \left\|\langle\lambda_j\rangle^{b_0-\delta} \widetilde{v_j}(k_j, \lambda_j)\right\|_{L^{q_1}_{\lambda_j}}
 \prod_{\ell\in\{1, 2, 3\}-\{j\}}   \|\widetilde{v_\ell}(k_\ell, \lambda_\ell)\|_{L^1_{\lambda_\ell}}
\right\|_{\ell^{p_0}_k }
\end{equation}
By (\ref{z5:KX+}) and (\ref{z5:bound11}).

(2) Let's consider the case when $j = 1$. By (\ref{z5:norm1}), it will suffice to bound 
\begin{equation}\label{z5:bound21}
\left\|
\langle k \rangle^{\frac{1}{2}}  \sum_{\mathbb{X}_*}  \frac{\langle k_1\rangle^{\frac{1}{2}}}{\langle k_2\rangle^{\frac{1}{2p_0}}\langle k_3\rangle^{\frac{1}{2p_0}}} \frac{1}{\langle \Delta\rangle^{1-6\delta}}  \prod_{\ell=1}^3 |{f_\ell}(k_\ell)|
\right\|_{\ell^{p_0}_k},
\end{equation}
where $f_1 (k_1) = \langle k_1\rangle^{\frac{1}{2}}\|\langle\lambda_1\rangle^{b_0-\delta}\widetilde{v_1}\|_{L^{q_1}_\lambda}$ and $f_\ell (k_\ell) = \langle k_\ell\rangle^{\frac{1}{2p_0}} \|\widetilde{v_\ell}\|_{L_\lambda^1}$ for $\ell = 2, 3$. Similar to the proof of Proposition {\ref{contnew}}, we also have similar bounds:
\[
\|f_1\|_{\ell^{p_0}_k}\lesssim \|v_1\|_{Z_0}, \quad \|f_\ell\|_{\ell^{r_2}_k}\lesssim \|v_\ell\|_{Y_0}
\]
for $\ell = 2, 3$.
We may use dyadic decomposition on $(k_2, k_3)$ and sum over $(k_1, k_2, k_3)$ that corresponds to $\Delta$  and  then over $\Delta$ and $k$. Following the same proof as in the part (1), the negative power of $N_2$ help us bound (\ref{z5:bound21}) by $\|v_1\|_{Z_0}\|v_2\|_{Y_0}\|v_3\|_{Y_0}$, when $\delta<1/(6p_0)$.  This finish the proof of (\ref{z5:X1}).

(3) Let's consider the case when $j \in \{2, 3\}$ and denote $i$ is the other number in $\{2, 3\}$. Similarly by (\ref{z5:norm1}), it will suffice to bound 
\begin{equation}\label{z5:bound31}
\left\|
\langle k \rangle^{\frac{1}{2}}  \sum_{\mathbb{X}_*}  \frac{\langle k_1\rangle^{\frac{1}{2}}}{\langle k_2\rangle^{\frac{1}{2p_0}}\langle k_3\rangle^{\frac{1}{2p_0}}} \frac{1}{\langle \Delta\rangle^{1-6\delta}}  \prod_{\ell=1}^3 |{f_\ell}(k_\ell)|
\right\|_{\ell^{p_0}_k},
\end{equation}
where $f_1 (k_1) = \langle k_1\rangle^{\frac{1}{2}}\|\widetilde{v_1}\|_{L^{1}_\lambda}$, $f_j (k_j) = \langle k_j\rangle^{\frac{1}{2p_0}} \|\langle\lambda_j\rangle^{b_0-\delta}\widetilde{v_j}\|_{L_\lambda^{q_1}}$ and $f_i (k_i) = \langle k_i\rangle^{\frac{1}{2p_0}} \|\widetilde{v_i}\|_{L_\lambda^1}$. Similar to the proof of Proposition {\ref{contnew}}, we also have similar bounds:
\[
\|f_\ell\|_{\ell_k^{p_0}}\lesssim \|v_\ell\|_{Z_0}, \quad \|f_i\|_{\ell_k^{r_2}}\lesssim \|v_i\|_{Y_0}
\]
for $\ell =1, j$. Following the same proof of the part (2), (\ref{z5:bound31}) can be bounded by $\|v_1\|_{Z_0}\|v_j\|_{Z_0}\|v_i\|_{Y_0}$ when $\delta<1/(6p_0)$. This finishes the proof of (\ref{z5:X23}).\end{proof}
\subsection{The canonical quintic term} The majority of $z_{j\ell}$ are quintic terms; in fact the majority of them can be treated in the same way, using the following estimate.
\begin{prop}
\label{canonical} Consider a quintic expression $\mathcal{R}$ that satisfies
\begin{equation}\label{defr}
|\mathscr{X}\mathcal{R}(v_1,\cdots,v_5)(k,\lambda)|\lesssim\sum_{\pm k_1\pm\cdots\pm k_5=k}\int_{\mathbb{R}}\frac{\mathrm{d}\sigma}{\langle\lambda\rangle^{1-\theta}\langle\lambda-\sigma\rangle^{1-\theta}}\int_{\pm\lambda_1\pm\lambda_2\pm\cdots\pm\lambda_5=\sigma-\Xi}\prod_{j=1}^5|\widetilde{v_j}(k_j,\lambda_j)|,
\end{equation} where $\Xi:=k^2\mp k_1^2\mp\cdots \mp k_5^2$ (the signs are arbitrary, but the signs of $\pm k_j$ and $\mp k_j^2$ are always the opposite). Then after the operation in Section \ref{splitz}, step (6), the resulting terms satisfy the corresponding multilinear estimates. In particular, suppose the chosen index during this operation is $i=1$, then we have
\begin{align}\label{quincbdd}\|\mathcal{R}(v_1,\cdots,v_5)\|_{Z_1}&\lesssim \|v_1\|_{Z_0}\prod_{j=2}^5\|v_j\|_{Y_0},\\
\label{quincbdd1}\|\mathcal{R}(\varphi_T\cdot\mathcal{E}_N^Y(v_1,v_2,v_3),v_4,\cdots,v_7)\|_{Z_1}&\lesssim \|v_1\|_{Z_0}\|v_2\|_{Z_0}\prod_{j=3}^7\|v_j\|_{Y_0},\\
\label{quincbdd2}\|\mathcal{R}(\varphi_T\cdot\mathcal{E}_L^Y(v_1,v_2,v_3),v_4,\cdots,v_7)\|_{Z_1}&\lesssim \|v_1\|_{Z_0}\prod_{j=2}^7\|v_j\|_{Y_0}.
\end{align}
\end{prop}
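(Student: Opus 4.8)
The plan is to reduce everything to a single ``canonical'' bound on the symbol appearing in \eqref{defr}, treating the two time-convolution kernels $\langle\lambda\rangle^{-(1-\theta)}\langle\lambda-\sigma\rangle^{-(1-\theta)}$ by the same Schur-type argument already used in Propositions \ref{contnew} and \ref{z5}, and then harvesting the smallness from the chosen large frequency $|k_i|\gtrsim|k|$. First I would fix the dyadic sizes $\langle k_j\rangle\sim N_j$ and (after relabeling) the chosen index $i=1$, so $N_1\gtrsim\max_j N_j$. The $Z_1$ norm is $\|\langle k\rangle^{1/2}\langle\lambda\rangle^{b_1}\widetilde{\mathcal{R}}\|_{\ell_k^{p_0}L_\lambda^{q_0}}$, and since $b_1=1-\delta<1-\theta$ for $\theta$ small, the $\langle\lambda\rangle^{b_1}$ weight is absorbed by $\langle\lambda\rangle^{-(1-\theta)}$, leaving $\langle\lambda\rangle^{-\theta}\langle\lambda-\sigma\rangle^{-(1-\theta)}$; integrating the $\sigma$-kernel against the convolution constraint $\pm\lambda_1\pm\cdots\pm\lambda_5=\sigma-\Xi$ via Schur (as in \eqref{z21:bound6} and \eqref{z5:bound11}) costs only an $L_\lambda^{q_0}\to$ loss of $\langle\cdot\rangle^{-\theta}$-type weights, which we can afford. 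As in the earlier proofs, I would then place each input $v_j$ ($j\ge 2$) into $\ell_k^{r_2}L_\lambda^1$ at regularity $\langle k_j\rangle^{(1-\sqrt\delta)/p_0}$ using that $\|v_j\|_{Y_0}$ controls $\|\langle k_j\rangle^{1/2}\langle\lambda_j\rangle^{1/2}\widetilde{v_j}\|_{\ell_k^{p_0}L_\lambda^{r_0}}$, and $v_1$ into $\ell_k^{p_0}$ at regularity $\langle k_1\rangle^{1/2}$ (from $Z_0$), after which one fixes the $\lambda_j$'s and integrates them out at the end.

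The core is then a purely spatial multilinear sum of the shape
\begin{equation}\label{corespat}
\bigg\|\langle k\rangle^{1/2}\!\!\sum_{\pm k_1\pm\cdots\pm k_5=k}\frac{1}{\langle k_1\rangle^{1/2}}\prod_{j=2}^5\frac{f_j(k_j)}{N_j^{(1-\sqrt\delta)/p_0}}\bigg\|_{\ell_k^{p_0}},\qquad \|f_1\|_{\ell_k^{p_0}}\lesssim 1,\ \|f_j\|_{\ell_k^{r_2}}\lesssim 1,
\end{equation}
together with whatever $\langle\Xi\rangle$-gain the divisor bound Lemma \ref{newdiv}(2) provides. The point of step (6) in Section \ref{splitz} is exactly that the chosen index $i$ carries $|k_i|\gtrsim|k|$ and, in the no-pairing case, sits at a position that makes the two-equation system \eqref{2sys} genuinely nondegenerate, so Lemma \ref{newdiv}(2) yields $N^\theta$-many solutions once $\Xi$ is fixed, with $N$ the second-largest frequency; in the paired cases the two paired frequencies can simply be summed off in $\ell^1$ (Minkowski, exactly as in \eqref{neg}), and the remaining three-frequency sum is again controlled by the divisor bound. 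Since $v_1$ sits in $\ell_k^{p_0}$ with the full half-derivative $\langle k_1\rangle^{1/2}$ while $\langle k\rangle^{1/2}\le\langle k_1\rangle^{1/2}\cdot(\text{harmless})$ because $|k_1|\gtrsim|k|$, one gets a clean $\ell_k^{p_0}\times(\ell_k^{r_2})^{\times 4}$ Young/Minkowski estimate, and the surplus negative powers $N_j^{-(1-\sqrt\delta)/p_0}$ beat the $N_j^{1/p_0}$-type losses from passing $\ell^{r_2}\to\ell^1$, leaving room to sum over all dyadic scales.

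For \eqref{quincbdd1} and \eqref{quincbdd2}, I would not re-run the argument from scratch: by Proposition \ref{contnew} and Proposition \ref{stcut}, $\varphi_T\cdot\mathcal{E}_N^Y(v_1,v_2,v_3)$ is bounded in $Y_0$ (indeed with a $T^\theta$ gain) by $\|v_1\|_{Z_0}\|v_2\|_{Z_0}\|v_3\|_{Y_0}$, and $\varphi_T\cdot\mathcal{E}_L^Y(v_1,v_2,v_3)$ by $\|v_1\|_{Z_0}\|v_2\|_{Y_0}\|v_3\|_{Y_0}$; substituting these septic factors as a single $Y_0$-input into the already-proven quintic estimate \eqref{quincbdd} immediately gives \eqref{quincbdd1}--\eqref{quincbdd2}. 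This is precisely the mechanism spelled out in Remark \ref{extrarem}. The main obstacle, and the step that needs genuine care rather than bookkeeping, is verifying that for \emph{every} configuration of signs and of the pairing pattern the chosen index $i$ really does land in a position where Lemma \ref{newdiv}(2) applies with the second-largest frequency as the effective modulus --- i.e., checking case by case (no pairing; one pairing with $i$ the max of the other three; two pairings with $i$ the singleton) that the relevant two equations in $(a,b,c)$ are nondegenerate (no residual pairing among the three unpaired frequencies in the reduced system), so that $\langle\Xi\rangle$ is nonzero and the divisor bound gives only $N^\theta$ loss; everything downstream is then the same Schur/Young/dyadic-summation routine as in Propositions \ref{contnew}, \ref{z21} and \ref{z5}.
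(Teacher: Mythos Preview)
Your approach to the septic bounds \eqref{quincbdd1}--\eqref{quincbdd2} has a gap. You want to substitute $V:=\varphi_T\cdot\mathcal{E}_*^Y(v_1,v_2,v_3)$ as the first input into \eqref{quincbdd} and invoke Remark \ref{extrarem}; but \eqref{quincbdd} as stated---and as you argue it, explicitly placing $v_1$ ``from $Z_0$''---requires $\|V\|_{Z_0}$, while Propositions \ref{stcut} and \ref{contnew} only give $\|V\|_{Y_0}$. Remark \ref{extrarem} applies precisely when the slot being substituted is measured in $Y_0$, which is not what \eqref{quincbdd} says. The fix is to observe that the proof of \eqref{quincbdd} in fact only uses $\|\langle k_1\rangle^{1/2}\widetilde{v_1}\|_{L_\lambda^1\ell_k^{p_0}}\lesssim\|v_1\|_{Y_0}$ (this holds since $r_0<2$ makes $\|\langle\lambda\rangle^{-1/2}\|_{L^{r_0'}}<\infty$), so \eqref{quincbdd} upgrades to a $Y_0$ estimate in the first slot and your substitution then goes through. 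The paper does not take this shortcut: it expands $\mathcal{E}_*^Y$ and analyzes the resulting septic kernel directly, splitting into $N_1\lesssim N'$ versus $N_1\gg N'$ (the latter forcing $*=N$ and exploiting $\|v_2\|_{Z_0}$). One reason for the direct route is Remark \ref{gain}: the paper's analysis yields $\langle k^+\rangle^\theta$ room with $k^+$ the maximum over \emph{all seven} frequencies, whereas your black-box substitution gives room only in the five outer frequencies $k',k_4,\dots,k_7$; this stronger room is used for the $z_{5*}$ terms in Section \ref{remain}.

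Your quintic sketch for \eqref{quincbdd} has the right ingredients (Schur in $\lambda$, then Lemma \ref{newdiv}(2)) but misses a case and is numerically off. After Schur one must bound $\|F(k,\Xi)\|_{\ell_k^{p_0}\ell_\Xi^{q_1}}$ where $F$ carries both the linear and the quadratic constraint---your display \eqref{corespat} drops the $\Xi$-constraint and the factor $f_1(k_1)$. Lemma \ref{newdiv}(2) handles the configurations where the second-largest of $N_2,\dots,N_5$ is at least a small power of the largest; but when one of them (say $N_2$) dominates the rest by a large power, the divisor bound gives nothing, and one instead fixes the three small frequencies $(k_3,k_4,k_5)$ and uses that index $1$ is never in a pairing (by the step-(6) choice) so that $(k_1,k_2)$ is uniquely determined by $(k,\Xi)$. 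This dichotomy is the paper's case (a)/(b) split and is not covered by your argument; in particular the claim that ``surplus negative powers $N_j^{-(1-\sqrt\delta)/p_0}$ beat the $N_j^{1/p_0}$-type losses from $\ell^{r_2}\to\ell^1$'' is wrong, since $\ell^{r_2}\to\ell^1$ costs $N_j^{1/2-3\delta}$, which for $p_0\ge 4$ is not beaten by $N_j^{-(1-\sqrt\delta)/p_0}$.
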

\begin{proof} (1) We first prove (\ref{quincbdd}). Assume all the norms on the right hand side are $1$. By a dyadic decomposition, we may restrict to the region where $\langle k_j\rangle\sim N_j$ for $2\leq j\leq 5$; by symmetry we may assume $N_2\geq\cdots\geq N_5$. As in the proof of Proposition \ref{contnew} we have
\[\|\langle k_1\rangle^{\frac{1}{2}}\widetilde{v_1}\|_{L_{\lambda}^1\ell_k^{p_0}}\lesssim 1\] and 
\[\|\langle k_j\rangle^{(1-\sqrt{\delta})/p_0}\widetilde{v_j}\|_{L_\lambda^1\ell_k^{2}}\lesssim\|\langle k_j\rangle^{(1-\sqrt{\delta})/p_0}\widetilde{v_j}\|_{L_\lambda^1\ell_k^{r_2}}\lesssim 1\] for $2\leq j\leq 5$. We may then again fix $\lambda_j$ for $1\leq j\leq 5$, which we eventually integrate over, and assume
\[f_1(k_1)=\langle k_1\rangle^{\frac{1}{2}}|\widetilde{v_1}(k_1,\lambda_1)|,\quad f_j(k_j)=N_j^{(1-\sqrt{\delta})/p_0}|\widetilde{v_j}(k_j,\lambda_j)|\,\,(2\leq j\leq 5),\] such that (after a further normalization)
\begin{equation}\label{quinfjbd}\|f_1\|_{\ell_k^{p_0}}\leq1,\quad \|f_j\|_{\ell_k^2}\lesssim 1\,\,(2\leq j\leq 5).
\end{equation} Using also that $\langle k\rangle\lesssim\langle k_1\rangle$, it then suffices to prove that
\begin{equation}\label{goal0}\bigg\|\langle\lambda\rangle^{b_1}\sum_{\pm k_1\pm\cdots\pm k_5=k}\int_{\mathbb{R}}\frac{1}{\langle\lambda\rangle^{1-\theta}\langle\lambda-\Xi-\mu\rangle^{1-\theta}}\prod_{j=1}^5f_j(k_j)\bigg\|_{\ell_k^{p_0}L_\lambda^{q_0}}\lesssim N_2^{-\theta}(N_2N_3N_4N_5)^{(1-\sqrt{\delta})/p_0}\end{equation} for any fixed $\mu$ (which is a linear combination of $\lambda_j$ for $1\leq j\leq 5$). Using the fact that $b_1<1-\theta$ and Schur's estimate, we can bound for fixed $k$ that
\[\bigg\|\langle\lambda\rangle^{b_1}\sum_{\pm k_1\pm\cdots\pm k_5=k}\int_{\mathbb{R}}\frac{1}{\langle\lambda\rangle^{1-\theta}\langle\lambda-\Xi-\mu\rangle^{1-\theta}}\prod_{j=1}^5f_j(k_j)\bigg\|_{L_\lambda^{q_0}}\lesssim\|F(k,\Xi)\|_{\ell_\Xi^{q_1}},
\] where
\[F(k,\Xi)=\sum_{\substack{\pm k_1\cdots\pm k_5=k\\\pm k_1^2\cdots\pm k_5^2=\Xi-k^2}}\prod_{j=1}^5f_j(k_j).\] As $\Xi$ is determined by $(k_2,k_3,k_4,k_5)$ and hence the number of different $\Xi$'s does not exceed $O(N_2^4)$, we can bound the  $\ell_\Xi^{q_1}$ norm of $F(k,\Xi)$ by $N_2^{\frac{4}{q_1}}$ times its $\ell_\Xi^{\infty}$ norm.

(a) Assume $N_3\geq N_2^{4\sqrt{\delta}}$. For fixed $k$ and $\Xi$, by assumption we know that either there is no pairing, or there is a pairing, say $(2,3)$, and there is no pairing in $\{1,4,5\}$, or there are two pairings, say $(2,3)$ and $(4,5)$. In the first case, for fixed $(k_2,k_3)$ (or $(k_4,k_5)$), the number of choices for $(k_1,k_4,k_5)$ (or $(k_1,k_2,k_3)$) is at most $O(N_2^\theta)$ by Lemma \ref{newdiv}, so we can bound using (\ref{quinfjbd}) that
\[F(k,\Xi)\lesssim \sum_{k_2,k_3}\sum_{k_1,k_4,k_5}f_1(k_1)f_2(k_2)^2f_3(k_3)^2+\sum_{k_4,k_5}\sum_{k_1,k_2,k_3}f_1(k_1)f_4(k_4)^2f_5(k_5)^2\lesssim N_2^\theta\sup_{|k_1-k|\lesssim N_2}f_1(k_1).\] In the other two cases this estimate can be similarly established. This gives
\[\|F(k,\Xi)\|_{\ell_\Xi^{\infty}}\lesssim N_2^\theta \bigg(\sum_{|k_1-k|\lesssim N_2}|f_1(k_1)|^{p_0}\bigg)^{\frac{1}{p_0}},\] and hence the left hand side of (\ref{goal0}) is bounded by
\[N_2^{\theta+ \frac{4}{q_1}}\bigg\|\bigg(\sum_{|k_1-k|\lesssim N_2}|f_1(k_1)|^{p_0}\bigg)^{\frac{1}{p_0}}\bigg\|_{\ell_k^{p_0}}\lesssim N_2^{\theta+ \frac{4}{q_1}+ \frac{1}{p_0}}\] using (\ref{quinfjbd}). As $N_3\geq N_2^{4\sqrt{\delta}}$, $4/q_1 =O(\delta)$, $\delta$ is small enough depending on $p_0$ and $\theta$ is small enough depending on $\delta$, this implies (\ref{goal0}).

(b) Assume $N_3\leq N_2^{4\sqrt{\delta}}$. Then in estimating $F(k,\Xi)$, we may fix the choices of $(k_3,k_4,k_5)$ and eventually sum over them. In this process we lost at most $N_2^{O(\sqrt{\delta})}$. Then, with $(k_3,k_4,k_5)$ fixed, we know that $(k_1,k_2)$ is uniquely determined by $k$ and $\Xi$, since by assumption $(1,2)$ cannot be a pairing. Thus, with $(k_3,k_4,k_5)$ fixed, we have
\[|F(k,\Xi)|\lesssim\sup_{k_2}f_1(k-\ell-k_2)f_2(k_2),\] where $\ell$ is a linear combination of $(k_3,k_4,k_5)$. As
\[
\big\|\sup_{k_2}f_1(k-\ell-k_2)f_2(k_2)\big\|_{\ell_k^{p_0}}\lesssim\bigg\|\bigg(\sum_{k_2}f_1(k-\ell-k_2)^{p_0}f_2(k_2)^{p_0}\bigg)^{\frac{1}{p_0}}\bigg\|_{\ell_k^{p_0}}=\|f_1\|_{\ell_k^{p_0}}\|f_2\|_{\ell_k^{p_0}}\] and $\|f_2\|_{\ell_k^{p_0}}\lesssim\|f_2\|_{\ell_k^2}\lesssim 1$, this bounds the left hand side of (\ref{goal0}) by $N_2^{ \frac{4}{q_1} + O(\sqrt{\delta})}$, which also suffices as $\delta$ is small enough depending on $p_0$.

(2) Next we will prove (\ref{quincbdd1}) and (\ref{quincbdd2}). By (\ref{defr}) and (\ref{formforxy})$\sim$(\ref{bdd1}), we can write (where $*\in\{N,L\}$)
\begin{multline}\label{form2}|\widetilde{\mathcal{R}}(k,\lambda)|\lesssim\sum_{\pm k'\pm k_4\pm\cdots\pm k_7=k}\int_{\mathbb{R}}\frac{\mathrm{d}\sigma}{\langle\lambda\rangle^{1-\theta}\langle\lambda-\sigma\rangle^{1-\theta}}\int_{\pm\lambda'\pm\lambda_4\pm\cdots\pm\lambda_7=\sigma-\Xi}\prod_{j=4}^7|\widetilde{v_j}(k_j,\lambda_j)|\\\sum_{\substack{k_2+k_3-k_1=k'\\(k_1,k_2,k_3)\in \mathbb{X}_*}}|k_1|
\int_{\mathbb{R}}T\widehat{\varphi}(T(\lambda'-\mu'))\,\mathrm{d}\mu'\int_{\mathbb{R}}\min\bigg(\frac{1}{\langle \Delta'\rangle},\frac{1}{\langle\mu'\rangle}\bigg)\frac{\mathrm{d}\sigma'}{\langle \mu'-\sigma'\rangle}\int_{\lambda_2+\lambda_3-\lambda_1=\sigma'-\Delta'}\prod_{j=1}^{3}|\widetilde{v_j}(k_j,\lambda_j)|.
\end{multline} Here $\Xi=k^2\mp (k')^2\mp k_4^2\mp\cdots\mp k_7^2$, and $\Delta'=(k')^2+k_1^2-k_2^2-k_3^2$. This can be reduced to
\begin{equation}\label{form3}\sum_{\pm k_1\pm\cdots\pm k_7=k}\frac{|k_1|}{|\Delta'|}\int_{\mathbb{R}}R(\lambda,\tau)\,\mathrm{d}\tau\int_{\pm\lambda_1\pm\cdots\pm\lambda_7=\tau-\Xi'}\prod_{j=1}^7|\widetilde{v_j}(k_j,\lambda_j)|,\end{equation} where $\Xi'=k^2\mp k_1^2\mp\cdots\mp k_7^2$, and the kernel
\[|R(\lambda,\tau)|\lesssim\int_{\mathbb{R}}\frac{\mathrm{d}\sigma}{\langle \lambda\rangle^{1-\theta}\langle \lambda-\sigma\rangle^{1-\theta}}\int_{\mathbb{R}}T\widehat{\varphi}(T(\xi-\sigma))\frac{\mathrm{d}\xi}{\langle\tau-\xi\rangle}.\] Here we can verify that $\lambda'-\sigma'=\tau-\sigma$, and $\xi$ is the variable such that $\xi-\sigma=\lambda'-\mu'$ and $\tau-\xi=\mu'-\sigma'$. Using the fact that $|T\widehat{\varphi}(T\xi)|\lesssim\langle \xi\rangle^{-1}$, we can easily bound the above by
\[|R(\lambda,\tau)|\lesssim\frac{1}{\langle \lambda\rangle^{1-\theta}\langle\lambda-\tau\rangle^{1-2\theta}}.\] We may then restrict to the dyadic region $\langle k\rangle\sim N_0$, $\langle k'\rangle\sim N'$ and $\langle k_j\rangle\sim N_j$ for $1\leq j\leq 7$. Let $N^+$ be the maximum of all the $N_j$'s. Then we have $N'\gtrsim N_0$, and $|\Delta'|\sim N'N_1$.

(a) Assume $N_1\lesssim N'$, we will then measure $v_2$ in the $Y_0$ norm. By repeating the above proof and fixing $\lambda_j$ for $1\leq j\leq 7$, we may reduce to proving
\begin{equation}\label{reduce}\|F(k,\Xi')\|_{\ell_k^{p_0}\ell_{\Xi'}^\infty}\lesssim N_0^{-\frac{1}{2}}N'N_1^{\frac{1}{2}}\cdot(N_2\cdots N_7)^{(1-\sqrt{\delta})/p_0-(1/q_1)}(N^+)^{-\theta},\end{equation} where
\begin{equation}\label{deffkxi'}F(k,\Xi')=\sum_{\substack{\pm k_1\pm\cdots\pm k_7=k\\\pm k_1^2\pm\cdots\pm k_7^2=k^2-\Xi'}}\prod_{j=1}^7f_j(k_j),\end{equation} and 
\begin{equation}\label{sepbd}\|f_1\|_{\ell_k^{p_0}}\leq 1,\quad \|f_j\|_{\ell_k^2}\leq 1\,\,(2\leq j\leq 7).
\end{equation} First assume $\max(N_2,\cdots, N_7)\geq (N^+)^{\sqrt{\delta}}$, then with fixed $k$, $k_1$ and $\Xi$, by using Lemma \ref{newdiv} and similar arguments as in the above proof, we can easily show that (whether or not there is any pairing in $\{2,3,\cdots,7\}$)
\begin{equation}\label{sextic}\sum_{\substack{\pm k_2\pm\cdots\pm k_7=\rm{const.}\\\pm k_2^2\pm\cdots\pm k_7^2=\rm{const.}}}\prod_{j=2}^7f_j(k_j)\lesssim\max(N_2,\cdots,N_7)^{\theta}\prod_{j=2}^7\|f_j\|_{\ell_k^2},
\end{equation} therefore
\[|F(k,\Xi')|\lesssim \max(N_2,\cdots,N_7)^{\theta}\sum_{k_1}f_1(k_1)\lesssim\max(N_2,\cdots,N_7)^{\theta}N_1^{1-(\frac{1}{p_0})}\] pointwise in $(k,\Xi')$, thus\[\|F(k,\Xi')\|_{\ell_k^{p_0}\ell_{\Xi'}^\infty}\lesssim N_0^{\frac{1}{p_0}}N_1^{1-(\frac{1}{p_0})}.\] Using the fact that $N'\gtrsim\max(N_0,N_1)$ and $\max(N_2,\cdots, N_7)\geq (N^+)^{\sqrt{\delta}}$, this easily implies (\ref{reduce}). Next assume $\max(N_2,\cdots, N_7)\leq (N^+)^{\sqrt{\delta}}$, then $N^+\sim N'$, and by fixing $(k_2,\cdots,k_7)$ we easily deduce that $|F(k,\Xi')|\lesssim (N')^{O(\sqrt{\delta})}$, from which (\ref{reduce}) follows trivially.

(b) Assume $N_1\gg N'$, then we must have $*=N$ and $N_1\sim N_2\gg N'\gtrsim N_0$. In particular $v_2$ will be measured in the $Z_0$ norm, so we may reduce to proving
\begin{equation}\label{reduce2}\|F(k,\Xi')\|_{\ell_k^{p_0}\ell_{\Xi'}^\infty}\lesssim N_0^{-\frac{1}{2}}N'N_1\cdot(N_3\cdots N_7)^{(1-\sqrt{\delta})/p_0}(N^+)^{-\theta-(6/q_1)},\end{equation} where $F(k,\Xi')$ is as (\ref{deffkxi'}), and
\begin{equation}\label{sepbd2}\|f_1\|_{\ell_k^{p_0}}\leq 1,\quad \|f_2\|_{\ell_k^{p_0}}\leq 1,\quad \|f_j\|_{\ell_k^2}\leq 1\,\,(3\leq j\leq 7).
\end{equation} Here we argue in the same way as case (1), using (\ref{sextic}), but make the additional observation that for fixed $k_1$ we must have $|k_2-k_1|\sim N'$. Therefore
\[\sum_{\substack{\pm k_2\pm\cdots\pm k_7=\rm{const.}\\\pm k_2^2\pm\cdots\pm k_7^2=\rm{const.}}}\prod_{j=2}^7f_j(k_j)\lesssim(N^+)^{\theta}\prod_{j=3}^7\|f_j\|_{\ell_k^2}\cdot\|f_2\cdot\mathbf{1}_{|k_2-k_1|\sim N'}\|_{\ell_k^2}\lesssim (N^+)^{\theta}(N')^{(\frac{1}{2})-(\frac{1}{p_0})},\] and hence
\[\|F(k,\Xi')\|_{\ell_k^{p_0}\ell_{\Xi'}^\infty}\lesssim N_0^{\frac{1}{p_0}}N_1^{1-(\frac{1}{p_0})}(N')^{(\frac{1}{2})-(\frac{1}{p_0})}(N^+)^\theta.\] Using the fact that $N_0\lesssim N'$, this implies (\ref{reduce2}).
\end{proof}
 \begin{rem}\label{gain} From the proof above we actually deduce something slightly stronger: the bounds (\ref{quincbdd})$\sim$(\ref{quincbdd2}) remain true if the right hand side of (\ref{defr}) gets multiplied by $\langle k^+\rangle^\theta$ where $|k^+|$ is the maximum of all relevant frequencies, unless $\langle k\rangle \sim\langle k_1\rangle\gtrsim N_2^{100}$. This fact will be used in the analysis of the $z_{5*}$ terms of Section \ref{remain}.
 \end{rem}
To apply Proposition \ref{canonical}, we will verify that $z_{11}$, $z_{2\ell} \, (2\leq \ell\leq 5)$, $z_{3\ell} \,  (1\leq\ell\leq 4)$ and $z_{4\ell}  \, (1\leq \ell\leq 2)$ all have the form (\ref{defr}). The claim for $z_{11}$ follows from (\ref{esti}). For the other terms, let us look at $z_{22}$ as an example. By (\ref{esti}) and (\ref{formforxy}) we have, for $z_{22}=z_{22}(v_1,\cdots,v_5)$, that
\begin{multline}\label{z22exp}|\widetilde{z_{22}}(k,\lambda)|\lesssim_B\sum_{\substack{k_4+k_5-k'=k\\(k',k_4,k_5)\in\mathbb{X}_H\cup \mathbb{X}_S}}|k'|\int_{\mathbb{R}}\bigg(\frac{1}{\langle \lambda\rangle^B}+\frac{1}{\langle \lambda-\tau\rangle^B}\bigg)\frac{\mathrm{d}\tau}{\langle\tau\rangle}\int_{\lambda_4+\lambda_5-\lambda'=\tau-\Delta}|\widetilde{v_4}(k_4,\lambda_4)||\widetilde{v_5}(k_5,\lambda_5)|\\
\sum_{\substack{k_2+k_3-k_1=k'\\(k_1,k_2,k_3)\in \mathbb{X}_{N}}}|k_1|\int_{\mathbb{R}}T\widehat{\varphi}(T(\lambda'-\mu'))\,\mathrm{d}\mu'\int_{\mathbb{R}}\min\bigg(\frac{1}{\langle\Delta'\rangle},\frac{1}{\langle \mu\rangle}\bigg)\frac{1}{\langle\mu'-\sigma'\rangle}\,\mathrm{d}\sigma'\int_{\lambda_2+\lambda_3-\lambda_1=\sigma'-\Delta'}\prod_{j=1}^3|\widetilde{v_j}(k_j,\lambda_j)|,
\end{multline} where $\Delta=k^2+(k')^2-k_4^2-k_5^2$ and $\Delta'=(k')^2+k_1^2-k_2^2-k_3^2$. Note that $|\Delta'|\sim \langle k\rangle\langle k_1\rangle$, the above can be written as
\[\sum_{k_1-k_2-k_3+k_4+k_5=k}\int_{\mathbb{R}}R(\lambda,\sigma)\,\mathrm{d}\sigma\int_{\lambda_1-\lambda_2-\lambda_3+\lambda_4+\lambda_5=\sigma-\Xi}\prod_{j=1}^5|\widetilde{v_j}(k_j,\lambda_j)|,\] where $\Xi=\Delta-\Delta'=k^2-k_1^2+k_2^3+k_3^2-k_4^2-k_5^2$, and 
\[|R(\lambda,\sigma)|\lesssim\int_{\mathbb{R}}\bigg(\frac{1}{\langle \lambda\rangle^B}+\frac{1}{\langle \lambda-\tau\rangle^B}\bigg)\frac{\mathrm{d}\tau}{\langle\tau\rangle}\int_{\mathbb{R}}T\widehat{\varphi}(T(\sigma-\xi)).\frac{\mathrm{d}\xi}{\langle\xi-\tau\rangle}\] Here we can verify that $\lambda'-\sigma'=\sigma-\tau$, and $\xi$ is the variable such that $\sigma-\xi=\lambda'-\mu'$ and $\xi-\tau=\mu'-\sigma'$. The above integral can easily be bounded by $\langle\lambda\rangle^{-1+\theta}\langle\lambda-\sigma\rangle^{-1+\theta}$, so Proposition \ref{canonical}
 can be applied.
 
 The other $z_{2\ell}$, $z_{3\ell}$ and $z_{4\ell}$ terms can be treated in the same way; in fact the kernel $R(\lambda,\sigma)$ will have exactly the same form, the only difference is that the weight
 \[\frac{|k|\cdot|k_1|}{|\Delta'|}\] will be replaced by different weights depending on which input function gets substituted by $\mathcal{E}^Y$, and which $\mathbb{X}_*$ subset we are in. For the terms $z_{22}$, $z_{23}$, $z_{31}$, $z_{32}$, $z_{41}$ and $z_{42}$ one can directly check that this weight is $\lesssim 1$; for the terms $z_{24}$, $z_{25}$, $z_{33}$ and $z_{34}$, this weight is $\lesssim 1$ as it follows from Proposition \ref{splitting} that $|k_2|\gtrsim |k_1|$ when $(k_1,k_2,k_3)\in\mathbb{X}_H\cup \mathbb{X}_S\cup\mathbb{X}_N$. Thus Proposition \ref{main3} has been proved for these terms.
 \subsection{Remaining quintic terms}\label{remain} The remaining quintic terms, namely $z_{26}$, $z_{27}$ and quintic $z_{5*}$ terms, may not have the canonical form (\ref{defr}). In fact these terms will be estimated directly without preforming the operation in step (6) of Section \ref{splitz}, see Remark \ref{extrarem}. For them we need two extra estimates, stated in the following two propositions.
 \begin{prop}\label{extra1}Suppose a quintic term $\mathcal{R}$ satisfies
 \begin{equation}\label{defr2}|\widetilde{\mathcal{R}}(k,\lambda)|\lesssim\sum_{\pm k_1\pm\cdots\pm k_5=k}\alpha(k,k_1,\cdots,k_5)\int_{\mathbb{R}}\frac{1}{\langle\lambda\rangle^{1-\theta}\langle\lambda-\sigma\rangle^{1-\theta}}\int_{\pm\lambda_1\pm\cdots\pm\lambda_5=\sigma-\Xi}\prod_{j=1}^5|\widetilde{v_j}(k_j,\lambda_j)|,
 \end{equation} where as usual $\Xi=k^2\mp k_1^2\mp\cdots\mp k_5^2$. Then we have the following (below $|k^+|$ will denote the maximum of all relevant frequencies):
 
 (1) Assume $|k_1|/2\leq |k_2|\leq 2|k_1|$, $|k_1|\geq 2^{5}|k_3|$, $|k_3|\sim\max(|k_3|,|k_4|,|k_5|)$ and \begin{equation}|\alpha|\lesssim\langle k^+\rangle^\theta\frac{\langle k_1\rangle}{\max(\langle k_3\rangle,\langle k_4\rangle,\langle k_5\rangle,\langle k\rangle)},\end{equation} moreover assume $(1,2)$ is not a pairing. Then we have that
 \begin{equation}\label{extest1}\|\mathcal{R}\|_{Z_1}\lesssim\|v_1\|_{Z_0}\|v_2\|_{Z_0}\|v_3\|_{Z_0}\cdot\|v_4\|_{Y_0}\|v_5\|_{Y_0};
 \end{equation}
 
 (2) Assume $|k_1|/2\leq |k_2|\leq 2|k_1|$, $|k_3|/2\leq|k_4|\leq 2|k_3|$, $|k_1|\geq 2^5\max(|k_5|,|k|)$ and 
 \begin{equation}|\alpha|\lesssim\langle k^+\rangle^\theta\frac{\langle k_1\rangle}{\max(\langle k_5\rangle,\langle k\rangle)}.\end{equation} Moreover assume $(1,2)$ is not a pairing, and that, either $|k|\neq |k_5|$, or the stronger bound
 \begin{equation}\label{stronger0} |\alpha|\lesssim\langle k^+\rangle^\theta\frac{\langle k_1\rangle}{\max(\langle k_5\rangle,\langle k\rangle,\langle\pm k_1\pm k_2\rangle)}
 \end{equation} holds. Then we have
 \begin{equation}\label{extest2}\|\mathcal{R}\|_{Z_1}\lesssim\|v_1\|_{Z_0}\|v_2\|_{Z_0}\|v_3\|_{Z_0}\|v_4\|_{Z_0}\cdot\|v_5\|_{Y_0}.
 \end{equation}
 \begin{proof} We may restrict to the region where $\langle k_j\rangle\sim N_j$, where $1\leq j\leq 5$, and $\langle k\rangle\sim N_0$ and $\langle k^+\rangle\sim N^+$.
 
 (1) By the same arguments as in the proof of Propositions \ref{contnew} and \ref{canonical}, we may fix $\lambda_j(1\leq j\leq 5)$ and reduce to estimating
 \begin{equation}\label{goal10}\bigg\|\langle\lambda\rangle^{b_1}\sum_{\pm k_1\pm\cdots\pm k_5=k}\int_{\mathbb{R}}\frac{1}{\langle\lambda\rangle^{1-\theta}\langle\lambda-\Xi-\mu\rangle^{1-\theta}}\prod_{j=1}^5f_j(k_j)\bigg\|_{\ell_k^{p_0}L_\lambda^{q_0}}\lesssim (N^+)^{-2\theta}\frac{N'N_3^{\frac{1}{2}}(N_4N_5)^{(1-\sqrt{\delta})/p_0}}{N_0^{\frac{1}{2}}},\end{equation} where $N'=\max(N_0,N_3)$, and $f_j$ satisfies that
 \begin{equation}\label{bdfj01}\|f_j\|_{\ell_k^{p_0}}\leq 1,\quad 1\leq j\leq 3;\qquad \|f_j\|_{\ell_k^{2}}\leq 1, \quad 4\leq j\leq 5.
 \end{equation} By the same argument as in the proof of Proposition \ref{canonical}, we may apply Schur's estimate and reduce to proving
 \begin{equation}\label{goal11}\|F(k,\Xi)\|_{\ell_k^{p_0}\ell_{\Xi}^{q_1}}\lesssim (N^+)^{-2\theta}\frac{N'N_3^{\frac{1}{2}}(N_4N_5)^{(1-\sqrt{\delta})/p_0}}{N_0^{\frac{1}{2}}},\quad F(k,\Xi):=\sum_{\substack{\pm k_1\pm\cdots\pm k_5=k\\\pm k_1^2\pm\cdots\pm k_5^2=k^2-\Xi}}\prod_{j=1}^5f_j(k_j).\end{equation} By fixing $(k_4,k_5)$ we get that
 \[\|F(k,\Xi)\|_{\ell_{\Xi}^{q_1}}\lesssim\|f_4\|_{\ell_k^1}\|f_5\|_{\ell_k^1}\sup_{\ell,\rho}\|F_{\ell,\rho}(k,\Xi)\|_{\ell_k^{p_0}\ell_{\Xi}^{q_1}},\quad F_{\ell,\rho}(k,\Xi):=\sum_{\substack{\pm k_1\pm k_2\pm k_3=k+\ell\\\pm k_1^2\pm k_2^2\pm k_3^2=k^2-\Xi+\rho}}\prod_{j=1}^3f_j(k_j),\] while since there is no pairing in $\{1,2,3\}$, by the standard divisor estimate we have
 \[|F_{\ell,\rho}(k,\Xi)|\lesssim (N^+)^{\theta}\bigg(\sum_{\substack{\pm k_1\pm k_2\pm k_3=k+\ell\\\pm k_1^2\pm k_2^2\pm k_3^2=k^2-\Xi+\rho}}\prod_{j=1}^3f_j(k_j)^{p_0}\bigg)^{\frac{1}{p_0}},\] and hence $\|F_{\ell,\rho}(k,\Xi)\|_{\ell_k^{p_0}\ell_{\Xi}^{q_1}}\lesssim \|F_{\ell,\rho}(k,\Xi)\|_{\ell_k^{p_0}\ell_{\Xi}^{p_0}}\lesssim (N^+)^{\theta}$. Using also H\"{o}lder we obtain \[\|F(k,\Xi)\|_{\ell_k^{p_0}\ell_{\Xi}^{q_1}}\lesssim (N_4N_5)^{\frac{1}{2}}(N^+)^\theta.\]
 
 Comparing with (\ref{goal11}) and using that $N'\sim\max(N_0,N_3)$ and $\max(N_4,N_5)\lesssim N_3$, we see that (\ref{goal11}) is proved, except for the loss $(N^+)^\theta$. Clearly this loss can be covered if $N'\gtrsim N_1^{1/10}$; now suppose $\max(N_0,N_3,N_4,N_5)\ll N_1^{1/10}$, then since $(1,2)$ is not a pairing, we must have $|\Xi|\gtrsim N^+$, which gives
 \[\max(|\lambda_1|,\cdots,|\lambda_5|,|\lambda|,|\lambda-\Xi-\mu|)\gtrsim N^+,\] where $\mu$ is a linear combination of $\lambda_1,\cdots\lambda_5$. Now, in estimating (\ref{goal10}) we can gain a power $\langle\lambda\rangle^{(1-\theta)-b_1}\geq\langle\lambda\rangle^{\delta/2}$; in the process of fixing $\lambda_j$ we can also gain a power $\langle\lambda_j\rangle^{\delta/2}$, as
 \begin{multline*}\|\langle\lambda_j\rangle^{\delta/2}\langle k_j\rangle^{(1-\sqrt{\delta})/p_0}\widetilde{v_j}\|_{L_\lambda^1\ell_k^{2}}\lesssim\|\langle\lambda_j\rangle^{\delta/2}\langle k_j\rangle^{(1-\sqrt{\delta})/p_0}\widetilde{v_j}\|_{L_\lambda^1\ell_k^{r_2}}\lesssim \|\langle k_j\rangle^{(1-\sqrt{\delta})/p_0}\langle\lambda_j\rangle^{\frac{1}{2}}\widetilde{v_j}\|_{L_\lambda^{r_0}\ell_k^{r_2}}\\\lesssim\|\langle k_j\rangle^{(1-\sqrt{\delta})/p_0}\langle\lambda_j\rangle^{\frac{1}{2}}\widetilde{v_j}\|_{\ell_k^{r_2}L_\lambda^{r_0}}\lesssim\|\langle k_j\rangle^{\frac{1}{2}}\langle \lambda_j\rangle^{\frac{1}{2}}\widetilde{v_j}\|_{\ell_k^{p_0}L_\lambda^{r_0}}.\end{multline*} Finally, in the process of using Schur's estimate to reduce (\ref{goal10}) to (\ref{goal11}), we can also replace the power $\langle\lambda-\Xi-\mu\rangle^{-1+\theta}$ by a slightly larger power gain a power $\langle\lambda-\Xi-\mu\rangle^{\delta/4}$. In this way we can gain a power of at least $(N^+)^{\delta/4}$ which suffices to cover the $(N^+)^{\theta}$ loss.
 
 (2) If there is no pairing in $\{1,2,3\}$, then similar to (1), we may fix $\lambda_j$ and reduce to proving
  \begin{equation}\label{goal12}\|F(k,\Xi)\|_{\ell_k^{p_0}\ell_{\Xi}^{q_1}}\lesssim (N^+)^{-2\theta}\frac{N'N_3\cdot N_5^{(1-\sqrt{\delta})/p_0}}{N_0^{\frac{1}{2}}},\quad F(k,\Xi):=\sum_{\substack{\pm k_1\pm\cdots\pm k_5=k\\\pm k_1^2\pm\cdots\pm k_5^2=k^2-\Xi}}\prod_{j=1}^5f_j(k_j),\end{equation} where $N^+=\max(N_1,N_3)$, $N'=\max(N_0,N_5)$ and
  \begin{equation}\label{fjbd4}\|f_j\|_{\ell_k^{p_0}}\leq 1, \, \, 1\leq j\leq 4;\quad \|f_5\|_{\ell_k^{2}}\leq 1.
  \end{equation}  Then we may fix $k_4$ and $k_5$ and argue as in part (1) to get
  \[\|F(k,\Xi)\|_{\ell_k^{p_0}\ell_{\Xi}^{q_1}}\lesssim (N^+)^\theta\|f_4\|_{\ell_k^1}\|f_5\|_{\ell_k^1}\lesssim (N^+)^\theta N_3^{1-(\frac{1}{p_0})}N_5^{\frac{1}{2}},\] which implies (\ref{goal12}) except for the loss $(N^+)^{\theta}$, which can be covered in the same way as part (1) by considering $\Xi$.
  
  If there is a pairing in $\{1,2,3\}$, say $(1,3)$, then $\frac{1}{2}\leq N_1/N_3\leq 2$. If $(2,4)$ is not a pairing, then we can fix $(k_3,k_5)$ and repeat the above argument to get the same (in fact better) estimate; so we may assume $(2,4)$ is also a pairing. This forces $\Xi=0$ and $k=k_5$, in particular the stronger bound (\ref{stronger0}) holds. Let $\langle \pm k_1\pm k_2\rangle\sim N_6$ and $N''=\max(N',N_6)$. In this case we will still fix $\lambda_j(1\leq j\leq 4)$ but will not fix $\lambda_5$. Instead, let $\mu$ be a linear combination of $\lambda_j(1\leq j\leq 4)$ and is thus fixed, and notice that $\Xi=0$, we have
  \[|\widetilde{\mathcal{R}}(k,\lambda)|\lesssim N_1^{-1}(N'')^{-1}\sum_{\substack{|k_1|\sim|k_2|\sim N_1\\|\pm k_1\pm k_2|\sim N_6}}\int_{\mathbb{R}}\frac{\mathrm{d}\sigma}{\langle\lambda\rangle^{1-\theta}\langle \lambda-\sigma\rangle^{1-\theta}}f_1(k_1)f_2(k_2)f_3(k_1)f_4(k_2)|\widetilde{v_5}(k,\pm\sigma\pm\mu)|,\] where $\|f_j\|_{\ell_k^{p_0}}\leq 1$ for $1\leq j\leq 4$. This implies that
  \[\|\langle \lambda\rangle^{b_1}\widetilde{\mathcal{R}}(k,\lambda)\|_{L_\lambda^{q_0}}\lesssim N_1^{-1}(N'')^{-1}\sum_{\substack{|k_1|\sim|k_2|\sim N_1\\|\pm k_1\pm k_2|\sim N_6}}f_1(k_1)f_2(k_2)f_3(k_1)f_4(k_2)\|\langle\lambda_5\rangle^{\frac{1}{2}}\widetilde{v_5}(k,\lambda_5)\|_{L_\lambda^{r_0}},\] and hence
  \begin{multline*}\|\langle k\rangle^{\frac{1}{2}}\langle \lambda\rangle^{b_1}\widetilde{\mathcal{R}}(k,\lambda)\|_{\ell_k^{p_0}L_\lambda^{q_0}}\\\lesssim N_1^{-1}(N'')^{-1}\|\langle k\rangle^{\frac{1}{2}}\langle\lambda_5\rangle^{\frac{1}{2}}\widetilde{v_5}(k,\lambda_5)\|_{\ell_k^{p_0}L_\lambda^{r_0}}\cdot\sum_{\substack{|k_1|\sim|k_2|\sim N_1\\|\pm k_1\pm k_2|\sim N_6}}f_1(k_1)f_2(k_2)f_3(k_1)f_4(k_2),\end{multline*} while the latter sum is bounded by
  \[\sum_{k_1}f_1(k_1)f_3(k_1)\cdot N_6^{1-(2/p_0)}\|f_2\|_{\ell_k^p}\|f_4\|_{\ell_k^p}\lesssim (N_1N_6)^{1-(2/p_0)},\] which gives the desired estimate as $N''\gtrsim N_6$.
 \end{proof}
 \end{prop}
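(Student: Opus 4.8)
The plan is to run the same reduction scheme that was used for Propositions~\ref{contnew} and \ref{canonical}, now keeping track of how the derivative gain built into the coefficient $\alpha$ in \eqref{defr2} is balanced against its denominator. First I would dyadically localise all frequencies, $\langle k_j\rangle\sim N_j$ for $1\le j\le 5$, $\langle k\rangle\sim N_0$, $\langle k^+\rangle\sim N^+$, and peel off the time--frequency variables exactly as before: using the $Z_0$ (resp.\ $Y_0$) structure of each input I bound the corresponding $\|\langle\lambda_j\rangle^{b_0}\widetilde{v_j}\|$-type quantity in $L^1_{\lambda_j}\ell^{p_0}_k$ (resp.\ $L^1_{\lambda_j}\ell^2_k$), freeze the $\lambda_j$'s, and apply Schur's estimate in $\lambda,\sigma$ against the weight $\langle\lambda\rangle^{b_1}$, which is licit since $b_1<1-\theta$. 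This turns both \eqref{extest1} and \eqref{extest2} into a purely arithmetic bound of the shape $\|F(k,\Xi)\|_{\ell^{p_0}_k\ell^{q_1}_\Xi}\lesssim(\text{target})$, where $F(k,\Xi)=\sum f_1(k_1)\cdots f_5(k_5)$ is summed over the coupled system $\pm k_1\pm\cdots\pm k_5=k$, $\pm k_1^2\pm\cdots\pm k_5^2=k^2-\Xi$, with $\|f_j\|_{\ell^{p_0}_k}\le1$ or $\|f_j\|_{\ell^2_k}\le1$ according to the ambient space.

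The core is then the counting of $F$. For \eqref{extest1} I would freeze the two small variables $k_4,k_5$ at the cost of $\|f_4\|_{\ell^1}\|f_5\|_{\ell^1}\lesssim N_4^{1/2}N_5^{1/2}$ (Cauchy--Schwarz); the remaining sum in $k_1,k_2,k_3$ is then controlled by one linear and one quadratic equation with \emph{no pairing} in $\{1,2,3\}$, so the divisor bound of Lemma~\ref{newdiv}(2) costs only $(N^+)^\theta$, and after bounding the $\ell^{q_1}_\Xi$ norm by the larger $\ell^{p_0}_\Xi$ norm and using H\"older the exponent arithmetic closes against the $\max(\langle k_3\rangle,\langle k_4\rangle,\langle k_5\rangle,\langle k\rangle)^{-1}$ in $\alpha$ (using $N'=\max(N_0,N_3)\gtrsim\max(N_4,N_5)$ from Proposition~\ref{splitting}). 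For \eqref{extest2} the same argument works when $\{1,2,3\}$ has no pairing, and also when $(1,3)$ is a pairing but $(2,4)$ is not (freeze the complementary pair and repeat). The residual $(N^+)^\theta$ loss produced by the divisor bound is absorbed whenever one small frequency is $\gtrsim N_1^{1/10}$; otherwise, all small frequencies being $\ll N_1^{1/10}$ and $(1,2)$ not a pairing forces $|\Xi|\gtrsim N^+$, hence $\max(|\lambda_1|,\dots,|\lambda_5|,|\lambda|,|\lambda-\Xi-\mu|)\gtrsim N^+$, so a genuine $(N^+)^{\delta/4}$ gain can be recovered by slightly upgrading the $\langle\lambda\rangle$, $\langle\lambda_j\rangle$ or $\langle\lambda-\Xi-\mu\rangle$ exponents, each of which has spare room since $b_1<1-\theta$ and each input carries an extra $\langle\lambda_j\rangle^{\delta/2}$.

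The main obstacle is the genuinely degenerate configuration of \eqref{extest2} in which $(1,3)$ \emph{and} $(2,4)$ are both pairings: then $\Xi=0$ and $k=k_5$, the divisor bound is useless, and one must instead invoke the hypothesis that the stronger bound \eqref{stronger0} is available, supplying the extra factor $\langle\pm k_1\pm k_2\rangle^{-1}$. The plan there is to \emph{not} freeze $\lambda_5$: using $\Xi=0$ one writes $|\widetilde{\mathcal R}(k,\lambda)|$ as $N_1^{-1}(N'')^{-1}$ (with $\langle\pm k_1\pm k_2\rangle\sim N_6$, $N''=\max(N',N_6)$) times a sum over $|k_1|\sim|k_2|\sim N_1$ of the kernel $\langle\lambda\rangle^{-(1-\theta)}\langle\lambda-\sigma\rangle^{-(1-\theta)}$ paired with $f_1(k_1)f_2(k_2)f_3(k_1)f_4(k_2)|\widetilde{v_5}(k,\cdot)|$, take $L^{q_0}_\lambda$ (which converts the $\sigma$-integral into the $r_0$-weight on $v_5$), and estimate the remaining frequency sum by $\sum_{k_1}f_1(k_1)f_3(k_1)\cdot N_6^{1-2/p_0}\|f_2\|_{\ell^{p_0}}\|f_4\|_{\ell^{p_0}}\lesssim(N_1N_6)^{1-2/p_0}$, which is exactly compensated by the prefactor since $N''\gtrsim N_6$. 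I expect the only genuinely delicate bookkeeping to be matching the pairing hypotheses and the precise shape of $\alpha$ to the outputs of Proposition~\ref{quinprep} in each concrete application, and keeping the powers of $N^+$ and of the small frequencies in balance throughout.
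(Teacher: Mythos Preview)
Your proposal is correct and follows essentially the same approach as the paper's proof: the same dyadic localisation and Schur reduction to the arithmetic quantity $F(k,\Xi)$, the same strategy of freezing the small variables $(k_4,k_5)$ (or $(k_3,k_5)$ when $(1,3)$ pairs) and invoking Lemma~\ref{newdiv}(2) on the remaining triple, the same mechanism of recovering the $(N^+)^\theta$ loss via $|\Xi|\gtrsim N^+$ when all small frequencies are tiny, and the same treatment of the doubly paired degenerate case by keeping $\lambda_5$ unfrozen and exploiting \eqref{stronger0}. Even the final frequency-sum estimate $(N_1N_6)^{1-2/p_0}$ matches the paper's computation line for line.
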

 \begin{prop}\label{extra2} Suppose a quintic term $\mathcal{R}$ satisfies
 \begin{equation}\label{defr3}|\widetilde{\mathcal{R}}(k,\lambda)|\lesssim\frac{1}{\langle\lambda\rangle^{1+10\delta}}\sum_{\pm k_1\pm\cdots\pm k_5=k}\beta(k,k_1,\cdots,k_5)\int_{\mathbb{R}^5}\prod_{j=1}^5|\widetilde{v_j}(k_j,\lambda_j)|\,\mathrm{d}\lambda_1\cdots\mathrm{d}\lambda_5,
 \end{equation} where (as usual $|k^+|$ is the maximum of all relevant frequencies)
 \begin{equation}|\beta|\lesssim\langle k^+\rangle^{\sqrt{\delta}}\frac{1}{\langle k\rangle\langle\pm k_3\pm k_4\rangle},\qquad \langle k\rangle\gtrsim\langle k_5\rangle.\end{equation} then we have
 \begin{equation}\|\mathcal{R}\|_{Z_1}\lesssim\prod_{j=1}^5\|v_j\|_{Y_0}.\end{equation} Note that all the norms on the right hand side are $Y_0$ (in particular the bound is symmetric in $v_1$ and $v_2$, $v_3$ and $v_4$).
    \end{prop}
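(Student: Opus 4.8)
\emph{Proof strategy for Proposition \ref{extra2}.}

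The plan is to first collapse the $Z_1$ estimate to a purely frequency-space $\ell^{p_0}_k$ inequality, and then dispose of the two gains $\langle k\rangle^{-1}$ and $\langle\pm k_3\pm k_4\rangle^{-1}$ present in $\beta$ by Young's convolution inequality. Since $b_1-1-10\delta=-11\delta$ and $11\delta\, q_0=\tfrac{11}{4}>1$, the weight $\langle\lambda\rangle^{b_1-1-10\delta}$ lies in $L^{q_0}_\lambda$, and as the right-hand side of (\ref{defr3}) does not depend on $\lambda$, taking the $L^{q_0}_\lambda$ norm in the definition of $\|\cdot\|_{Z_1}$ costs only a constant. After this the variables $\lambda_1,\dots,\lambda_5$ are integrated out freely; writing $G_j(k_j)=\int_{\mathbb R}|\widetilde v_j(k_j,\lambda_j)|\,\mathrm d\lambda_j$ and using, exactly as in the proof of Proposition \ref{contnew}, that $\|\langle k_j\rangle^{1/2}G_j\|_{\ell^{p_0}_{k_j}}\lesssim\|v_j\|_{Y_0}$ (Hölder in $\lambda_j$ with the weight $\langle\lambda_j\rangle^{1/2}$, legitimate because $r_0<2$), the claim reduces to the bound
\[
\Big\|\langle k\rangle^{1/2}\sum_{\pm k_1\pm\cdots\pm k_5=k}|\beta|\,\prod_{j=1}^5 G_j(k_j)\Big\|_{\ell^{p_0}_k}\ \lesssim\ \prod_{j=1}^5\|\langle k_j\rangle^{1/2}G_j\|_{\ell^{p_0}_{k_j}}
\]
for arbitrary nonnegative $G_j$. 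Note in particular that, unlike elsewhere in the paper, no divisor bound is needed here; the whole point is that the time decay in (\ref{defr3}) is already enough to make the $\lambda$-side trivial.

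Next I would dyadically decompose $\langle k\rangle\sim N_0$, $\langle k_j\rangle\sim N_j$, $\langle\pm k_3\pm k_4\rangle\sim N_6$, normalize so that $\|\langle k_j\rangle^{1/2}G_j\|_{\ell^{p_0}}=1$, write $G_j=\langle k_j\rangle^{-1/2}h_j$ with $\|h_j\mathbf 1_{N_j}\|_{\ell^{p_0}}\le1$, and (by the symmetry of the hypothesis in $v_1\leftrightarrow v_2$ and $v_3\leftrightarrow v_4$) assume $N_1\ge N_2$ and $N_3\ge N_4$. The two gains are handled separately. Since $\langle k\rangle^{1/2}|\beta|\lesssim (N^+)^{\sqrt\delta}\langle k\rangle^{-1/2}\langle\pm k_3\pm k_4\rangle^{-1}$, I use the hypothesis $\langle k\rangle\gtrsim\langle k_5\rangle$ to split $\langle k\rangle^{-1/2}\le\langle k\rangle^{-\theta}\langle k_5\rangle^{-(1/2-\theta)}$: the factor $\langle k\rangle^{-\theta}$ becomes $N_0^{-\theta}$ on the block, while $\langle k_5\rangle^{-(1/2-\theta)}G_5=\langle k_5\rangle^{-1+\theta}h_5=:G_5''$ obeys $\|G_5''\mathbf 1_{N_5}\|_{\ell^1}\lesssim N_5^{\theta-1/p_0}$. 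The factor $\langle\pm k_3\pm k_4\rangle^{-1}$ I fold into the grouped function $G_{34}(m)=\sum_{\pm k_3\pm k_4=m}G_3(k_3)G_4(k_4)$, which is supported in $\langle m\rangle\sim N_6$; interpolating between the bound $\|\langle m\rangle^{-1}G_{34}\mathbf 1_{N_6}\|_{\ell^1}\le\big(\sum_{\langle m\rangle\sim N_6}\langle m\rangle^{-1}\big)\|G_{34}\|_{\ell^\infty}\lesssim\|G_3\|_{\ell^2}\|G_4\|_{\ell^2}\lesssim N_3^{-1/p_0}N_4^{-1/p_0}$ and the Cauchy--Schwarz bound $\|\langle m\rangle^{-1}G_{34}\mathbf 1_{N_6}\|_{\ell^1}\le\|\langle m\rangle^{-1}\mathbf 1_{N_6}\|_{\ell^2}\|G_{34}\|_{\ell^2}\lesssim N_6^{-1/2}N_3^{1/2-1/p_0}N_4^{-1/p_0}$, one gets $\|\langle m\rangle^{-1}G_{34}\mathbf 1_{N_6}\|_{\ell^1}\lesssim N_6^{-\theta}N_3^{-1/p_0+\theta}N_4^{-1/p_0}$.

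Then I would apply Young's convolution inequality to the remaining four-fold convolution $G_1\ast G_2\ast\big(\langle\cdot\rangle^{-1}G_{34}\big)\ast G_5''$ in the output variable $k=\pm k_1\pm k_2\pm m\pm k_5$: keeping $G_1$ in $\ell^{p_0}$ (which contributes $\|G_1\mathbf 1_{N_1}\|_{\ell^{p_0}}\lesssim N_1^{-1/2}$) and the other three in $\ell^1$, the estimate $\ell^{p_0}\ast\ell^1\ast\ell^1\ast\ell^1\to\ell^{p_0}$ together with $\|G_2\mathbf 1_{N_2}\|_{\ell^1}\lesssim N_2^{1/2-1/p_0}$ and $N_1\ge N_2$ yields
\[
(\text{dyadic piece})\ \lesssim\ (N^+)^{\sqrt\delta}\,N_0^{-\theta}\,N_6^{-\theta}\,\max(N_1,N_2)^{-1/p_0}\,N_3^{-1/p_0+\theta}\,N_4^{-1/p_0}\,N_5^{-1/p_0+\theta}.
\]
To finish I would sum over all dyadic blocks. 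Because $\pm k_1\pm\cdots\pm k_5=k$ is a six-term relation, the two largest of $N_0,\dots,N_5$ are comparable, so at least one of $N_1,\dots,N_5$ is $\sim N^+$; in whichever case occurs, the corresponding negative power among $\max(N_1,N_2)^{-1/p_0}$, $N_3^{-1/p_0+\theta}$, $N_4^{-1/p_0}$, $N_5^{-1/p_0+\theta}$ absorbs the loss $(N^+)^{\sqrt\delta}$ (here $\delta$, hence $\sqrt\delta$, and $\theta$ are small relative to $1/p_0$). Using $(N^+)^{\sqrt\delta}\lesssim\big(\max(N_1,N_2)\,N_3\,N_4\,N_5\big)^{\sqrt\delta}$, every dyadic parameter then carries a strictly negative exponent, and $N_6\lesssim\max(N_3,N_4)$, $N_0\lesssim\max_{j\ge1}N_j$ guarantee the remaining sums converge, giving the claimed bound. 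The main obstacle is precisely this bookkeeping in the last two paragraphs: one must split the gains $\langle k\rangle^{-1}$ and $\langle\pm k_3\pm k_4\rangle^{-1}$ carefully enough that, after Young, each of $N_0,\dots,N_6$ ends up with a genuinely negative power, while the residual $(N^+)^{\sqrt\delta}$ is beaten by the power saved on whichever frequency is of size $N^+$.
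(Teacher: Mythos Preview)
Your proof is correct and takes essentially the same approach as the paper: trivialize the $L^{q_0}_\lambda$ norm via the $\langle\lambda\rangle^{-1-10\delta}$ decay, then handle the resulting $\ell^{p_0}_k$ convolution by Young's inequality with the highest-frequency factor kept in $\ell^{p_0}$ and the rest in $\ell^1$, using $\langle k\rangle\gtrsim\langle k_5\rangle$ and the orderings on $(N_1,N_2)$, $(N_3,N_4)$ to close the dyadic sums. The paper's bookkeeping is somewhat more direct (it normalizes to $f_j$ with $\|f_j\|_{\ell^{p_0}}\le 1$ and compares against the single target $(N_1N_3N_5N_6)^{1-1/p_0}\lesssim (N^+)^{-1/(2p_0)}N_6(N_0\cdots N_5)^{1/2}$, avoiding your auxiliary parameter $\theta$ and the interpolation step for $G_{34}$), but the substance is identical.
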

 \begin{proof} As before we will restrict to the region where $\langle k_j\rangle \sim N_j$ for $1\leq j\leq 5$, $\langle k\rangle\sim N_0$, and $\langle\pm k_3\pm k_4\rangle\sim N_6$. Let $N^+\sim\langle k^+\rangle$. This time we will \emph{not} fix $\lambda_j$; instead we first integrate in them. We may assume all the norms on the right hand side are $1$. Let
 \[N_j^{\frac{1}{2}}\|\widetilde{v_j}(k_j,\lambda_j)\|_{L_\lambda^1}=f_j(k_j),\] then $\|f_j\|_{\ell_k^{p_0}}\lesssim 1$ as $v_j\in Y_0$. By (\ref{defr3}), it suffices to prove that
\begin{equation}\label{goal21}\bigg\|\langle\lambda\rangle^{b_1}\langle\lambda\rangle^{-1-10\delta}\sum_{\pm k_1\pm\cdots\pm k_5=k}\prod_{j=1}^5f_j(k_j)\bigg\|_{\ell_k^{p_0}L_{\lambda}^{q_0}}\lesssim (N^+)^{-2\sqrt{\delta}}\cdot N_6(N_0N_1N_2N_3N_4N_5)^{\frac{1}{2}},
\end{equation} where
\begin{equation}
\|f_j\|_{\ell_k^{p_0}}\leq 1,\,\,1\leq j\leq 5.
\end{equation} By symmetry we may assume $N_1\leq N_2$ and $N_3\leq N_4$. By the choice of power of $\lambda$, the $L_\lambda^{q_0}$ part is easily estimated, so we only need to bound the $\ell_k^{p_0}$ norm
\[\bigg\|\sum_{\pm k_1\pm\cdots\pm k_5=k}\prod_{j=1}^5f_j(k_j)\bigg\|_{\ell_k^{p_0}}.\] By Young's inequality, this is bounded by the $\ell_k^{p_0}$ of $f_2$ (which is $\sim 1$), multiplied by
\[\sum_{k_1,k_3,k_4,k_5}f_1(k_1)f_3(k_3)f_4(k_4)f_5(k_5).\] The sum over $k_1$ and $k_5$ gives (by H\"{o}lder) $(N_1N_5)^{1-(\frac{1}{p_0})}$; when $k_3$ is fixed the sum over $k_4$ gives $N_6^{1-(\frac{1}{p_0})}$ as $|\pm k_3\pm k_4|\lesssim N_6$, and finally the sum over $k_3$ gives $N_3^{1-(\frac{1}{p_0})}$. This gives the bound
\[(N_1N_3N_5N_6)^{1-(\frac{1}{p_0})}\lesssim (N^+)^{-1/(2p_0)}N_6(N_0N_1N_2N_3N_4N_5)^{\frac{1}{2}},\] as $N_1\leq N_2$, $N_3\leq N_4$ and $N_0\gtrsim N_5$.\end{proof}
Using Propositions \ref{extra1}, we can easily deal with the terms $z_{26}$ and $z_{27}$. For these two terms, by repeating the arguments for $z_{22}$ detailed above, we are led to considering the tuple $(k_1,k_2,k')$ and $(k_3,k_4,k_5)$, such that
\[k_2+k'-k_1=k,\quad (k_1,k_2,k')\in \mathbb{X}_H\cup\mathbb{X}_S;\quad k_3+k_4-k_5=k',\quad (k_3,k_4,k_5)\in \mathbb{X}_N\cup\mathbb{X}_L,\] and a weight
\[\alpha(k,k_1,\cdots,k_5)\sim\frac{|k_1||k_3|}{\langle\Delta'\rangle},\] noticing that $|\Delta'|\geq 1$. By Proposition \ref{quinprep}, this term can be bounded using either Proposition \ref{canonical}, or Proposition \ref{extra1}, (1) or (2).
\subsubsection{The $z_{5*}$ terms} Finally let us consider quintic $z_{5*}$ terms. By (\ref{formforxy})$\sim$(\ref{bdd1}) and (\ref{k+bd}) we write, where $z_{5*}=z_{5*}(v_1,\cdots,v_5)$ and $*,\bullet\in\{N,L\}$, that (strictly speaking $z_{57}$ and $z_{58}$ have a different formula, but taking into account that the set $\mathbb{X}_L$ is symmetric with respect to $k_2$ and $k_3$ - apart from the artificial restriction $|k_2|\geq |k_3|$ - they can be treated in exactly the same way):
 \begin{multline}|\widetilde{z_{5*}}(k,\lambda)|\lesssim_B\sum_{\substack{k_2+k'-k_1=k\\(k_1,k_2,k')\in\mathbb{X}_*}}|k_1|\int_{\mathbb{R}}\bigg[\frac{1}{\langle \lambda\rangle^B\langle\tau\rangle}+\frac{\langle\tau-\Delta\rangle}{\langle\lambda-\tau\rangle^B\langle\tau\rangle}\min\bigg(\frac{1}{\langle\Delta\rangle},\frac{1}{\langle \tau\rangle}\bigg)+\frac{\langle\tau-\Delta\rangle}{\langle\lambda-\tau\rangle}\min\bigg(\frac{1}{\langle\Delta\rangle},\frac{1}{\langle \lambda\rangle}\bigg)^2\bigg]\,\mathrm{d}\tau\\\int_{\lambda_2+\lambda'-\lambda_1=\tau-\Delta}\mathbf{1}_{|\lambda'|\gtrsim|\tau-\Delta|}|\widetilde{v_1}(k_1,\lambda_1)||\widetilde{v_2}(k_2,\lambda_2)|\sum_{\substack{k_4+k_5-k_3=k'\\(k_3,k_4,k_5)\in\mathbb{X}_\bullet}}|k_3|\int_{\mathbb{R}}T\widehat{\varphi}(T(\lambda'-\mu'))\,\mathrm{d}\mu'\\
 \int_{\mathbb{R}}\min\bigg(\frac{1}{\langle\Delta'\rangle},\frac{1}{\langle\mu'\rangle}\bigg)\frac{1}{\langle\mu'-\sigma'\rangle}\,\mathrm{d}\sigma'\int_{\lambda_4+\lambda_5-\lambda_3=\sigma'-\Delta'}|\widetilde{v_3}(k_3,\lambda_3)||\widetilde{v_4}(k_4,\lambda_4)||\widetilde{v_5}(k_5,\lambda_5)|,
 \end{multline} where $\Delta=k^2+k_1^2-k_2^2-(k')^2$ and $\Delta'=(k')^2+k_3^2-k_4^2-k_5^2$. The above can be reduced to
 \begin{equation}\label{reducedx}\sum_{-k_1+k_2-k_3+k_4+k_5=k}\int_{\mathbb{R}}R(\lambda,\sigma)\,\mathrm{d}\sigma\int_{-\lambda_1+\lambda_2-\lambda_3+\lambda_4+\lambda_5=\sigma-\Xi}\prod_{j=1}^5|\widetilde{v_j}(k_j,\lambda_j)|,
 \end{equation} where $\Xi=\Delta+\Delta'=k^2+k_1^2-k_2^2+k_3^2-k_4^2-k_5^2$, and the kernel\footnote{This kernel depends on $k_j$ and $\lambda_j$, but we will write it as $R(\lambda,\sigma)$ for simplicity.}
 \begin{multline}\label{kernelr}
 R(\lambda,\sigma)=\int_{\mathbb{R}}\bigg[\frac{1}{\langle \lambda\rangle^B\langle\tau\rangle}+\frac{\langle\tau-\Delta\rangle}{\langle\lambda-\tau\rangle^B\langle\tau\rangle}\min\bigg(\frac{1}{\langle\Delta\rangle},\frac{1}{\langle \tau\rangle}\bigg)+\frac{\langle\tau-\Delta\rangle}{\langle\lambda-\tau\rangle}\min\bigg(\frac{1}{\langle\Delta\rangle},\frac{1}{\langle \lambda\rangle}\bigg)^2\bigg]\,\mathrm{d}\tau\\
 |k_1k_3|\mathbf{1}_{|\lambda'|\gtrsim|\tau-\Delta|}\int_{\mathbb{R}}T\widehat{\varphi}(T(\lambda'-\mu'))\min\bigg(\frac{1}{\langle\Delta'\rangle},\frac{1}{\langle\mu'\rangle}\bigg)\frac{1}{\langle\mu'-\sigma'\rangle}\,\mathrm{d}\mu'.
 \end{multline} Here $\lambda'=\tau-\Delta+\lambda_1-\lambda_2$ and $\sigma'=\lambda_4+\lambda_5-\lambda_3+\Delta'$ are defined in terms of $\tau$ and $(k_j,\lambda_j)$. First fix $\tau$ and integrate in $\mu'$; this integral is bounded by
 \[\int_{\mathbb{R}}\frac{1}{\langle\lambda'-\mu'\rangle}\min\bigg(\frac{1}{\langle\Delta'\rangle},\frac{1}{\langle\mu'\rangle}\bigg)\frac{1}{\langle\mu'-\sigma'\rangle}\,\mathrm{d}\mu',\] and we separate two cases.
 
 (1) Assume $|\sigma'|\ll|\lambda'|$, then we can calculate that
 \[\frac{1}{\langle \Delta'\rangle}\int_{\mathbb{R}}\frac{1}{\langle\lambda'-\mu'\rangle\langle\mu'-\sigma'\rangle}\,\mathrm{d}\mu'\lesssim\frac{1}{\langle\Delta'\rangle}\frac{1}{\langle\lambda'-\sigma'\rangle^{1-\theta}}\sim\frac{1}{\langle\Delta'\rangle}\frac{1}{\langle\lambda'\rangle^{1-\theta}}.\] Note that $|\lambda'|\gtrsim|\tau-\Delta|$, we can then bound the resulting integral in $\tau$ by
 \begin{multline*}\int_{|\tau-\Delta|\lesssim|\lambda_1-\lambda_2|}\bigg[\frac{1}{\langle \lambda\rangle^B\langle\tau\rangle}+\frac{\langle\tau-\Delta\rangle}{\langle\lambda-\tau\rangle^B\langle\tau\rangle}\min\bigg(\frac{1}{\langle\Delta\rangle},\frac{1}{\langle \tau\rangle}\bigg)\\+\mathbf{1}_{\langle \lambda-\tau\rangle\ll\langle\tau-\Delta\rangle}\frac{\langle\tau-\Delta\rangle}{\langle\lambda-\tau\rangle}\min\bigg(\frac{1}{\langle\Delta\rangle},\frac{1}{\langle \lambda\rangle}\bigg)^2\bigg]\frac{1}{\langle\Delta'\rangle}\frac{1}{\langle\tau-\Delta\rangle^{1-\theta}}\,\mathrm{d}\tau,\end{multline*}
 which is then bounded by
 \[\frac{1}{\langle\Delta\rangle\langle\Delta'\rangle}(\max_j\langle k_j\rangle)^{\sqrt{\delta}}\langle\lambda\rangle^{-1-10\delta}\] by actually performing the integration in $\tau$. By bounding the weight 
 \[\beta=\frac{|k_1||k_3|}{\langle\Delta\rangle\langle\Delta'\rangle}\] using Proposition \ref{quinprep}, we can apply Proposition \ref{extra2} and conclude the estimate for this term.
 
 (2) Assume $|\sigma'|\gtrsim|\lambda'|$, then we can calculate that\[\int_{\mathbb{R}}\frac{1}{\langle\lambda'-\mu'\rangle\langle\mu'\rangle\langle\mu'-\sigma'\rangle}\,\mathrm{d}\mu'\lesssim\frac{1}{\langle\lambda'\rangle^{1-\theta}}\frac{1}{\langle\lambda'-\sigma'\rangle^{1-\theta}}.\] Note that $\lambda'-\sigma'=\tau-\sigma$, and using the fact that $|\lambda'|\gtrsim|\tau-\Delta|$, we can bound the resulting integral in $\tau$ by
 \[\int_{\mathbb{R}}\bigg[\frac{\mathbf{1}_{\langle\sigma\rangle\ll\langle\Delta\rangle}}{\langle \lambda\rangle^B\langle\tau\rangle}+\frac{\langle\tau-\Delta\rangle}{\langle\lambda-\tau\rangle^B\langle\tau\rangle}\min\bigg(\frac{1}{\langle\Delta\rangle},\frac{1}{\langle \tau\rangle}\bigg)+\frac{\langle\tau-\Delta\rangle}{\langle\lambda-\tau\rangle}\min\bigg(\frac{1}{\langle\Delta\rangle},\frac{1}{\langle \lambda\rangle}\bigg)^2\bigg]\frac{1}{\langle\tau-\Delta\rangle^{1-\theta}}\frac{1}{\langle\tau-\sigma\rangle^{1-\theta}}\,\mathrm{d}\tau,\] which can be bounded by
 \[(\max_j\langle k_j\rangle)^{10\theta}\frac{1}{\langle\lambda\rangle^{1-\theta}\langle\lambda-\sigma\rangle^{1-\theta}}\frac{1}{\langle \Delta\rangle}.\] By bounding the weight \[\alpha=\frac{|k_1||k_3|}{\langle\Delta\rangle}\] using Proposition \ref{quinprep}, we can apply either Proposition \ref{canonical}, or Proposition \ref{extra1}, (1) or (2).
 
 In the case we apply Proposition \ref{canonical}, we will also use Remark \ref{gain} to cover the loss $(\max_j\langle k_j\rangle)^{10\theta}$, which can be done unless for some $j$ we have $|k_j|\sim|k|\gtrsim\max_{\ell\neq j}|k_l|^{100}$; in this final case we can check that the stronger bound $|\alpha|\lesssim |k|^{-\frac{1}{2}}$ holds, so the loss can still be covered. This completes the proof of Proposition \ref{main3}.
\section{Preservation of regularity}\label{preg}  Finally in this section we prove a preservation of regularity result. More precisely, 
 we prove the properties of our solution stated in Remark \ref{property}. The following proposition is standard:
\begin{prop}\label{pres} Given $s>\frac{1}{2}$ and $2\leq p_0<\infty$, all the arguments in the previous sections carry over to $H_{p_0}^s$ (and correspondingly $X_{p_0,q_0}^{s,b_j}$ and $X_{p_0,r_j}^{s,\frac{1}{2}}$ for $j\in\{0,1\}$). Moreover, in these arguments $T$ still depends only on the $H_{p_0}^{\frac{1}{2}}$ (instead of $H_{p_0}^s$) size of the initial data.
\end{prop}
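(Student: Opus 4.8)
The plan is to re-run every multilinear estimate of Sections \ref{function}--\ref{recoverw} in the $s$-versions of the function spaces (i.e.\ with $X_{p_0,q}^{\frac12,b}$ replaced by $X_{p_0,q}^{s,b}$ throughout, so that $Y_0,Y_1,Z_0,Z_1$ acquire analogues $X_{p_0,r_j}^{s,\frac12}$ and $X_{p_0,q_0}^{s,b_j}$), and then to bootstrap off the already-constructed $H^{1/2}_{p_0}$ solution so as to keep $T$ unchanged. The structural fact enabling the first part is that in \emph{every} nonlinearity in the analysis --- the cubic pieces $\mathcal C_*$, the quintic $\mathcal Q$, and all derived operators $\mathcal E_*^{Y}$, $\mathcal E_*^{X,\cdot}$, $z_{j\ell}$ --- the output frequency $k$ is a signed sum of the input frequencies $k_1,\dots,k_r$ ($r\in\{3,5,7\}$), hence
\begin{equation}\label{pres:triangle}
\langle k\rangle^{s}\;\lesssim\;\langle k\rangle^{\frac12}\,\langle k^{+}\rangle^{s-\frac12}\;\lesssim\;\langle k\rangle^{\frac12}\sum_{j=1}^{r}\langle k_j\rangle^{s-\frac12},
\end{equation}
where $k^{+}$ denotes the largest of the $|k_j|$. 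Splitting each multilinear term according to which index $j$ realises the right-hand side of \eqref{pres:triangle}, we are reduced in each piece to exactly the estimate already proved, except that the single input $v_j$ now carries an extra weight $\langle k_j\rangle^{s-\frac12}$ --- that is, $v_j$ is measured in the $s$-version of whichever space ($Z_0$, $Y_0$, or $Z_1$) it occupied before, while all remaining inputs keep their $\frac12$-norms.

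It then suffices to observe that the proof of each such estimate is insensitive to this upgrade. In every estimate of Sections \ref{recoverv}--\ref{recoverw} the surplus powers that drive the summations --- the divisor gains of Lemma \ref{newdiv} and the explicit factors $N_2^{-\theta}$, $(N^{+})^{-\theta}$, $\langle\Delta\rangle^{-\theta}$ appearing throughout --- involve frequencies \emph{other than} the maximal one, while the input carrying the surplus weight $\langle k_j\rangle^{s-\frac12}$ in a given piece is precisely the one realising $k^{+}$; moving the weight onto that input therefore leaves all these gains, and in particular the $T^{\theta}$ gain furnished by Proposition \ref{stcut}, intact, with the same implicit constants. The only places needing a second glance are the reductions through the kernel $K_\Delta^{X,+}$ and the canonical quintic estimate (Proposition \ref{canonical}), where a loss $\langle k^{+}\rangle^{\theta}$ is already present (Remark \ref{gain}); but in the single exceptional regime flagged there, $\langle k\rangle\sim\langle k_1\rangle\gtrsim N_2^{100}$, the relevant coefficient obeys the stronger bound recorded in Section \ref{remain} (such as $|\alpha|\lesssim|k|^{-1/2}$), so absorbing the extra $\langle k\rangle^{s-\frac12}$ into the $k_1$-input requires nothing new. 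Consequently Propositions \ref{stcut}, \ref{contnew}, \ref{cont}, \ref{main3}, and hence \ref{main2}, all hold verbatim with $\tfrac12$ replaced by $s$, and the fixed point producing $w$ (and then $v=v[w]$) goes through in the $s$-spaces, giving local well-posedness in $H^s_{p_0}$.

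Finally, the claim that $T$ still depends only on $\|u_0\|_{H^{1/2}_{p_0}}$ is obtained by a standard bootstrap rather than a fresh contraction at level $s$. Having already produced the solution on $J=[-T,T]$ with $T=T(p_0,A)$, $\|w\|_{Z_0(J)}\le A_2$ and $\|v\|_{Y_0(J)}\le A_3$, one re-runs the map $w\mapsto(\text{right side of }\eqref{dnls5})$ on the \emph{same} interval but in the $s$-version of the $Z_0$-ball, of radius $\sim\|v_0\|_{H^{s}_{p_0}}$. By \eqref{pres:triangle} every nonlinear term is at most linear in the $s$-norm of $w^{+}$ (equivalently of $v^{+}=v^{+}[w^{+}]$, via Proposition \ref{cont} and its $s$-analogue), with the remaining factors estimated by their $\tfrac12$-norms --- hence by $A_2$, a function of $A$ alone --- times a factor $T^{\theta}$ from Proposition \ref{stcut}; thus the contraction constant is $\lesssim_{A_2}T^{\theta}<1$ for the \emph{same} $T=T(p_0,A)$. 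This places $w\in X_{p_0,q_0}^{s,b_0}(J)$, hence $u\in C_t^0 H_{p_0}^{s}(J)$ after inverting the gauge, which preserves $H^{s}_{p_0}$ regularity by the same computations as in Proposition \ref{transform} (cf.\ \cite{GH}). The main obstacle is purely bookkeeping: one must go down the list of terms $z_{j\ell}$ and through the case analysis of Proposition \ref{quinprep} and confirm in each instance both that the maximal-frequency input is permitted to sit in the higher-regularity space (it always is, every input being measured in a space with an $s$-analogue) and that no $\theta$-gain is charged to that frequency; as explained above, this is always the case.
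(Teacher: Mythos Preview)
Your proposal is correct and follows essentially the same approach as the paper: both rely on the elementary inequality $\langle k\rangle^{s-\frac12}\lesssim\max_{j}\langle k_j\rangle^{s-\frac12}$ to transfer the extra weight onto a single input, and both observe that the resulting multilinear estimates are linear in the $s$-norm with all remaining factors controlled by the $\tfrac12$-norms, so the contraction constant (and hence $T$) depends only on $A$. The paper packages the second step slightly more compactly by running the original contraction in the combined norm $\|\cdot\|_{X_{p_0,q_0}^{1/2,b_j}(I)}+L^{-1}\|\cdot\|_{X_{p_0,q_0}^{s,b_j}(I)}$ rather than bootstrapping, but this is equivalent to what you describe; your additional case-checking regarding the $\langle k^{+}\rangle^{\theta}$ losses is harmless but not actually needed, since the weight transfer is purely a relabeling of one input's norm and does not interact with the summability gains.
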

\begin{proof} This follows from the elementary inequality that
\[\langle k\rangle^{s-(\frac{1}{2})}\lesssim\max_{1\leq j\leq r}\langle k_j\rangle^{s-(\frac{1}{2})},\quad \mathrm{if\ }k=\pm k_1\pm\cdots\pm k_r,\,\,r\in\{3,5,7\}.\] Thus, any previously proved multilinear estimate will continue to be true if the exponent $\frac{1}{2}$ in the output function space is replaced by $s$, provided that the exponent $\frac{1}{2}$ in \emph{one} appropriate input function space is replaced by $s$. 

Suppose the initial data has $H_{p_0}^{\frac{1}{2}}$ norm $A$ and $H_{p_0}^s$ norm $L$, then for $T=T(A)$, all the $X_{p_0,q_0}^{\frac{1}{2},b_j}(I)$ - and similarly for $X_{p_0,r_j}^{\frac{1}{2},\frac{1}{2}}(I)$ - contraction mappings proved before will still be contraction mapping under the norm
\[\|\cdot\|_{X_{p_0,q_0}^{\frac{1}{2},b_j}(I)}+L^{-1}\|\cdot\|_{X_{p_0,q_0}^{s,b_j}(I)},\quad \mathrm{similarly}\quad \|\cdot\|_{X_{p_0,r_j}^{\frac{1}{2},\frac{1}{2}}(I)}+L^{-1}\|\cdot\|_{X_{p_0,r_j}^{s,\frac{1}{2}}(I)}.\qedhere\]
\end{proof}
Now consider a smooth initial data $u_0$. Proposition \ref{pres} implies that, if $\|u_0\|_{H_{p_0}^{\frac{1}{2}}}\leq A$, then for $T=T(A)$, we can construct a solution to (\ref{dnls}) on $J=[-T,T]$ that belongs to $C_t^0H_{p_0}^s(J)$ for $s$ sufficiently large. This is clearly the classical solution to (\ref{dnls}). If a sequence of smooth initial data $u_0^{(n)}$ satisfies $\|u_0^{(n)}\|_{H_{p_0}^{\frac{1}{2}}}\leq A$ and $u_0^{(n)}\to u_0$ in $H_{p_0}^{\frac{1}{2}}$, then by continuity of the data-to-solution map in $H_{p_0}^{\frac{1}{2}}$ (which follows from the previous proofs), the corresponding solutions $u^{(n)}$ will converge to $u$ in $C_t^0H_{p_0}^{\frac{1}{2}}$, where $u$ is the solution we construct in Theorem \ref{main} with initial data $u_0$. This shows that our solution is the unique limit of smooth solutions.

Finally, suppose $p_0<4$, then the gauged solution $v$ we construct belongs to the space $X_{p_0,r_0}^{\frac{1}{2},\frac{1}{2}}(J)$ where $r_0<2$. It can be shown that
\[X_{p_0,r_0}^{\frac{1}{2},\frac{1}{2}}(J)\subset X_{p_0,2}^{\frac{1}{2},\frac{1}{2}}(J)\cap X_{p_0,1}^{\frac{1}{2},0}(J)\] for any interval $J$ of length not exceeding $1$, so our solution belongs to the function space defined in \cite{GH} in which the authors have proved uniqueness. Therefore when $p_0<4$, our solution must coincide with the one constructed in \cite{GH}, as long as the latter exists.


\begin{thebibliography}{99}

\bibitem{BaiBer} Bailleul, I. and Bernicot, F.,  {\em Heat semigroup and singular PDEs}, J. Funct. Anal. {\bf 270}, no. 9, pp. 3344-3452 [2016].

\bibitem{BiaLin} Biagioni H.A. and Linares, F.,  {\em Ill-posedness for the derivative Schr\"odinger and generalized Benjamin-Ono equations}, Trans. Amer. Math. Soc. {\bf 353}, no. 9, pp. 3649-3659 [2001].

\bibitem{BOP2} B\'enyi,  A., Oh, T. and Pocovnicu, O., {\em On the probabilistic Cauchy theory of the cubic nonlinear Schr\"odinger equation on $\mathbb{R}^d$, $d³3$.} Trans. Amer. Math. Soc. Ser. B 2, pp. 1Ð50 [2015]. 

\bibitem{BOP}  B\'enyi, A., Pocovnicu, O. and Oh, T., {\em Higher order expansions for the probabilistic local Cauchy theory of the cubic nonlinear Schr\"odinger equation on $\mathbb{R}^3$}, Trans. Amer. Math. Soc. Ser. B {\bf 6}, pp. 114-160 [2019].

\bibitem{B5} Bourgain, J., {\em Invariant measures for the $2D$ defocusing nonlinear Schr\"odinger equation}, Comm. Math. Phys.  {\bf 176}, pp. 421-445 [1996].

\bibitem{B6} Bourgain, J., {\em Global solutions of nonlinear Schr\"odinger equations}, Amer. Math. Soc. Colloq.Pub.  {\bf 46}, Amer. Math. Soc. [1999].

\bibitem{B8}  Bourgain, J., {\em Fourier transform restriction phenomena for certain lattice subsets and applications to nonlinear evolution equations. I. Schr\"odinger equations}, GAFA {\bf 3}, pp. 107-156 [1993].

\bibitem{BTz1} Burq, N. and Tzvetkov, N., {\em Random data Cauchy theory for supercritical wave equations.I. Local theory}, Invent. Math. {\bf 173}, no. 3, 449-475 [2008].

\bibitem{BTT} Burq, N. Thoman and L. Tzvetkov, N., {\em Long time dynamics for the one dimensional non linear Schrödinger equation}, Ann. Inst. Fourier (Grenoble) {\bf 63}, no. 6, pp.  2137-2198 [2013].

\bibitem{CCh} Catellier R. and Chouk, K.,  {\em Paracontrolled distributions and the 3-dimensional stochastic quantization equation},  Ann Prob. {\bf 46}, no. 5, pp. 2621--2679  [2018].

\bibitem{ChWe}  Chandra, A. and Weber, H., {\em Stochastic PDEs, regularity structures, and interacting particle systems}, Ann. Fac. Sci. Toulouse Math. (6) {\bf 26}, no. 4, pp. 847-909 [2017].

\bibitem{CCMNS} Chanillo, S., Czubak, M., Mendelson, D., Nahmod, A. and Staffilani, G. {\em Almost sure boundedness of iterates for derivative nonlinear wave equations},
 \href{https://arxiv.org/abs/1710.09346v2}{arXiv:1710.09346}	 [math.AP].
 
 \bibitem{Christ} Christ, M., {\em Power series solution of a nonlinear Schr\"odinger equation}, Mathematical aspects of nonlinear dispersive equations, Ann. of Math. Stud., {\bf 163}, pp.131-155 [2007].

  
\bibitem{CKSTT1} Colliander, J., Keel, M., Staffilani, G., Takaoka, H., and Tao, T.,  
{\em Global well-posedness for Schr\"odinger equations with derivative }, 
SIAM J. Math. Anal. {\bf 33 }, no.3, pp. 649-669  [2001].

\bibitem{CKSTT2} Colliander, J., Keel, M., Staffilani, G., Takaoka, H., and Tao, T.,  
{\em A refined global well-posedness for Schr\"odinger equations with derivative}, 
SIAM J. Math. Anal.  {\bf 34 }, no. 1, pp. 64-86  [2002].

\bibitem{CoOh} Colliander, J. and Oh, T., {\em Almost sure well-posedness of the cubic nonlinear Schr\"dinger equation below $L^2(\mathbb{T})$}, Duke Math. J. {\bf 161}, no. 3, pp. 367-414 [2012].

\bibitem{DPD} Da Prato, G. and Debussche, A. {\em Two-dimensional Navier-Stokes equations driven by a space-time white noise}, J. Funct. Anal. {\bf 196}, no. 1, pp. 180-210 [2002].


\bibitem{DPD2}  Da Prato, G. and Debussche, A. {\em Strong solutions to the stochastic quantization equations.}  Ann. Probab. {\bf 31}, no. 4, 1900-1916 [2003].

\bibitem{Deng} Deng, Y.,  {\em Invariance of the Gibbs measure for the Benjamin-Ono equation},  J. Eur. Math. Soc. (JEMS) {\bf 17}, no. 5, pp. 1107--1198  [2015].

\bibitem{Deng2} Deng, Y., {\em Two-dimensional nonlinear Schr\"odinger equation with random radial data}, Anal. PDE {\bf 5}, no. 5, pp.913-960 [2012].

\bibitem{DLM} Dodson, B., L\"uhrmann, J. and Mendelson, D., {\em Almost sure local well-posedness and scattering for the 4D cubic nonlinear Schr\"odinger equation}, Advances in Mathematics {\bf 347}, pp. 619-676 [2019].

\bibitem{GrNa}  Grigoryan, V. and Nahmod, A., {\em  Almost critical well-posedness for nonlinear wave equations with $Q_{\mu\nu}$ null
forms in 2D}, Math. Res. Lett.  {\bf 21} no. 2, 313Ð332 [2014].

\bibitem{Gr} Gr\"unrock, A., {\em Bi and Trilinear Schr\"odinger estimates in one space dimension with applications to cubic NLS and DNLS}, Int. Math. Res. Not. (IMRN) {\bf 41}, pp. 2525--2558 [2005].

\bibitem{Gr-wave} Gr\"unrock, A., {\em On the wave equation with quadratic nonlinearities in three space dimensions}, J. Hyperbolic
Differ. Equ. {\bf 8} no. 1, 1Ð8  [2011].

\bibitem{GH} Gr\"{u}nrock A. and Herr S.,  {\em Low regularity local well-posedness of the derivative nonlinear Schrödinger equation with periodic initial data},   SIAM J. Math. Anal. {\bf 39}, no. 6, pp. 1890-1920 [2008].


\bibitem{GIP}  Gubinelli, M., Imkeller P.  and Perkowski, N.  {\em Paracontrolled distributions and singular PDEs},  Forum Math Pi {\bf 3}, e6, 75 pp. [2015].

\bibitem{GIP2} Gubinelli, M., Imkeller, P. and Perkowski, N., {\em A Fourier analytic approach to pathwise stochastic integration}, Electron. J. Probab. {\bf 21}, Paper No. 2, 37 pp. [2016]. 

\bibitem{GP}  Gubinelli, M. and Perkowski, N. {\em Lectures on singular stochastic PDEs}, Ensaios Matem\'aticos [Mathematical Surveys], 29. Sociedade Brasileira de Matem\'atica, Rio de Janeiro. 89 pp. [2015].

\bibitem{GKO} Gubinelli, M., Koch, H. and Oh, T. {\em Renormalization of the two-dimensional stochastic nonlinear wave equations}, Trans. Amer. Math. Soc. {\bf 370}, no. 10, pp. 7335-7359 [2018].

\bibitem{GKO2} Gubinelli, M., Koch, H. and Oh, T. {\em Paracontrolled approach to the three-dimensional stochastic nonlinear wave equation with quadratic nonlinearity}, 
 \href{https://arxiv.org/abs/1811.07808}{arXiv:1811.07808} [math.AP].

\bibitem{Hairer} Hairer, M.,  {\em Solving the KPZ equation}, Ann. of Math. (2) {\bf 178}(2), pp. 559-664 [2013].

\bibitem{Hairer1}  Hairer, M., {\em  A theory of regularity structures}, Inventiones Math. {\bf 198}, (2), pp. 269-504, [2014].

\bibitem{Hairer2}  Hairer, M., {\em  Singular Stochastic PDE.} Proceedings of the ICM-Seoul, Vol. {\bf I}, 685-709 [2014].

\bibitem{Hairer3}  Hairer, M., {\em Regularity structures and the dynamical $\Phi^4_3$ model}, Current Developments in Mathematics 2014,  1-49, Int. Press, Somerville, MA  [2016]. 

\bibitem{Hay} Hayashi, N., {\em The initial value problem for the derivative nonlinear 
Schr\"odinger equation in the energy space}, 
Nonlinear Analysis {\bf 20} no. 7, pp. 823-833 [1993].

\bibitem{HayOz1} Hayashi, N. and Ozawa, T., {\em  On the derivative nonlinear 
Schr\"odinger equation}, Physica D {\bf 55} no. 1-2, pp. 14-36 [1992].

\bibitem{HayOz2} Hayashi, N. and Ozawa, T., {\em  Finite energy solutions of nonlinear   
Schr\"odinger equation of derivative type}, 
SIAM J. of Math. Anal. {\bf 25} no. 6, pp. 1488-1503 [1994].

\bibitem{Herr} Herr, S.,  {\em On the Cauchy problem for the derivative nonlinear 
Schr\"odinger equation with periodic boundary condition}, Int. Math. Res. Not. 
(IMRN), Article ID 96763,  pp. 1-33, [2006].

\bibitem{HiyOk} Hirayama, H. and Okamoto, M., {\em Random data Cauchy problem for the nonlinear Schr\"odinger equation with derivative nonlinearity}, Discrete Contin. Dyn. Syst. {\bf 36}, no. 12, pp. 6943-6974 [2016].

\bibitem{KaupNew} Kaup, D.J. and Newell, A.C., {\em An exact  solution for the derivative nonlinear Schr\"odinger equation}, J. of Math. Physics {\bf 19} no. 4, pp. 798-801 [1978].

\bibitem{Hormander}  H\"ormander, L.,  {\em The Analysis of Linear Partial Differential 
Operators. II}, Grundlehren Math. Wiss. {\bf 257}, Springer-Verlag, Berlin, [1983].

\bibitem{Jenkins1} Jenkins, R., Liu, J., Perry, P. and Sulem, C., {\em Global Well-Posesedness for the Derivative Nonlinear Schr\"odinger Equation}, 
\href{https://arxiv.org/abs/1710.03810}{arXiv:1710.03810}	 [math.AP].

\bibitem{Jenkins2} Jenkins, R., Liu, J., Perry, P. and Sulem, C., {\em Global Existence for the Derivative Nonlinear Schr\"odinger Equation with Arbitrary Spectral Singularities}, 
\href{https://arxiv.org/abs/1804.01506v2}{arXiv:1804.01506}	 [math.AP].


\bibitem{MWX} Miao, C., Wu, Y., and  Xu, G., 
{\em  Global well-posedness for Schr\"odinger equation with derivative in $H^{\frac{1}{2}}(\R)$}, J. Differential Equations {\bf 251}, no. 8, pp. 2164-2195 [2011]. 

\bibitem{Mos}  Mosincat, R., {\em Global well-posedness of the derivative nonlinear Schr\"odinger equation with periodic boundary condition in $H^{\frac{1}{2}}$}, J. Differential Equations {\bf 263}, no. 8, 4658-4722 [2017].

\bibitem{MoYo}  Mosincat, R. and Yoon, H., {\em Unconditional uniqueness for the derivatve Schr\"odinger equation on the real line},
\href{https://arxiv.org/abs/1810.09806}{arXiv:1810.09806}	 [math.AP].

\bibitem{MoOh} Mosincat, R. and Oh, T.,  {\em A remark on global well-posedness of the derivative nonlinear Schr\"odinger equation on the circle}, C. R. Math. Acad. Sci. Paris {\bf 353}, no. 9, pp. 837-841 [2015]. 

\bibitem{MouWe} Mourrat, J. and Weber, H. {\em The dynamic $\Phi^4_3$ model comes down from infinity}, Comm. Math. Phys. {\bf 356}, no. 3, pp. 673Ð753 [2017]. 


\bibitem{MouWeX} Mourrat, J.,  Weber, H. and Xu, W. {\em Construction of $\Phi^4_3$ diagrams for pedestrians}, From particle systems to partial differential equations, 1-46, Springer Proc. Math. Stat., 209, Springer,  [2017].

\bibitem{NORS} Nahmod, A. R., Oh, T., Rey-Bellet, L. and Staffilani, G., {\em Invariant weighted Wiener measures and  almost sure global well-posedness  for the periodic derivative NLS.} \,  J. Eur. Math. Soc. (JEMS) {\bf 14}, no. 4, 1275-1330, [2012].

\bibitem{NRSS} Nahmod, A. R.,  Rey-Bellet, L., Sheffield, S. and Staffilani, G., {\em   Absolute continuity of Brownian bridges under certain gauge transformations}. Math. Res. Lett. {\bf 18}, no. 5,  875-887 [2011]. 

\bibitem{NS} Nahmod, A. and Staffilani, G., {\em Almost sure well-posedness for the periodic 3D quintic nonlinear Schr\"odinger equation below the energy space}, J. Eur. Math. Soc. (JEMS) {\bf 17}, no. 7, pp. 1687-1759 [2015].

\bibitem{Oz}  Ozawa, T., {\em  On the nonlinear Schr\"odinger equations of derivative type}, Indiana Univ. Math. J. {\bf 45} no. 1, pp. 137-163 [1996].

\bibitem{Pelin}  Pelinovsky, D. E. and Shimabukuro, Y., {\em Existence of global solutions to the derivative NLS equation with the inverse scattering transform method} \href{https://arxiv.org/abs/1602.02118v1}{arXiv:1602.02118}	 [math.AP].


\bibitem{SS} Sulem, C and Sulem, J.P., {\em The nonlinear Sch\"odinger equation.  
Self-focusing and wave collapse.},  Applied Mathematical Sciences, {\bf 139}. Springer-Verlag, New York, [1999].

\bibitem{Tak1} Takaoka, H.,  
{\em Well-posedness for the one dimensional nonlinear Schr\"odinger equation with the derivative nonlinearity}, 
Adv. Differential Equations,  {\bf 4 }, no.4 pp. 561-580  [1999].

\bibitem{Tak2} Takaoka, H.,  
{\em Global well-posedness for Schr\"odinger equations with derivative in a nonlinear term 
and data in low order Sobolev spaces}, 
Electron. J. Differential Equations ,  no. 42, 23 pp. (electronic)  [2001].

\bibitem{Th} Thomann, L., {\em Random data Cauchy problem for supercritical Schr\"odinger equations}, Ann. Inst. H. Poincar\'e Anal. Non Lin\'eaire {\bf 26}, no. 6, 2385Ð2402 [2009].

\bibitem{TF} Tsutsumi, M. and Fukuda, I., {\em On solutions of the derivative nonlinear Schr\"odinger equation II}, Funkcial. Ekvac. {\bf 24}, no. 1,  pp. 85-94 [1981]. 

\bibitem{VarVe} Vargas, A. and Vega, L. {\em Global well-posedness for 1D Schr\"odinger equations for data with an infinite $L^2$-norm}, J. Math. Pures Appl. (9)  {\bf 80}, no. 10, 1029-1044, [2001]

\bibitem{Win} Win, Y., {\em Global Well-Posedness of the Derivative Nonlinear Schr\"odinger 
Equations on $\T$}, Funkcial. Ekvac. {\bf 53}, pp. 51-88, [2010]. 

\bibitem{Yue} Yue, H., {\em Almost sure well-posedness for the cubic nonlinear Schr\"odinger equation in the super-critical regime on $\mathbb{T}^d$, $d³3$}, \href{https://arxiv.org/abs/1808.00657}{arXiv:1808.00657} [math.AP].

\end{thebibliography}
\end{document}